\newcounter{thm}
\newtheorem{theorem}[thm]{Theorem}
\newtheorem{lemma}[thm]{Lemma}
\newtheorem{corollary}[thm]{Corollary}
\newtheorem{proposition}[thm]{Proposition}
\theoremstyle{definition}
\newtheorem{definition}{Definition}
\newtheoremstyle{rmk}
{2pt}
{5pt}
{}
{}
{\itshape}
{:}
{.5em}
{}
\newtheorem{remark}{Remark}
\newcommand{\cond}[1]{&\text{if }#1}
\renewcommand{\tilde}{\widetilde}
\title{Higher-genus wall-crossing in Landau--Ginzburg theory}
\author{Yang Zhou}
\address{Department of Mathematics, Stanford University, 450 Serra Mall,
  Stanford, CA 94305, USA}
\email{yangzhou@stanford.edu}
\date{}
\begin{document}

\begin{abstract}
  For a Fermat quasi-homogeneous polynomial, we study the associated weighted
  Fan--Jarvis--Ruan--Witten theory with narrow insertions. We prove a wall-crossing
  formula in all genera via localization on a master space, which is constructed
  by introducing an additional tangent vector  to
    the moduli problem. This is a Landau--Ginzburg theory
  analogue of the higher-genus quasi-map wall-crossing formula proved by
  Ciocan-Fontanine and Kim. It generalizes the genus-$0$ result by Ross--Ruan and
  the genus-$1$ result by Guo--Ross.
\end{abstract}
\maketitle

\section{Introduction}
\subsection{Overview}
The goal of this paper is to prove the all-genera wall-crossing formula for
the  weighted Fan--Jarvis--Ruan--Witten (FJRW) invariants of a Fermat
polynomial with extended narrow  insertions.

Let $W$ be a Fermat type degree-$r$ quasi-homogeneous
polynomial of weights $(w_1,\cdots,w_s)\in \mathbb Z^s$:
\[
   W(X_1,\cdots,X_s)=X_1^{r/w_1} +\cdots + X_s^{r/w_s} .
\]
We assume that each
$r/w_\alpha\geq 2$ is an integer and $\gcd(r,w_1,\cdots,w_s)=1$.
The polynomial $W$ defines a smooth hypersurface in the weighted projective
space $\mathbb P(w_1,\cdots,w_s)$.
We define charges $q_\alpha=w_\alpha/r$ and set $q=\sum q_\alpha$.
When $q=1$, $X_W$ is a Calabi--Yau orbifold and  the Landau--Ginzburg/Calabi--Yau (LG/CY) correspondence relates the
Gromov--Witten theory of $X_W$ to the FJRW theory
of $(W,\langle J \rangle)$. These are the two phases
of the {\it gauged linear sigma model} (GLSM)  mathematically defined in
\cite{fan2015mathematical}.

There is a family of gauged linear sigma models interpolating between these two
theories \cite{witten1993phases, fan2015mathematical}, parametrized by a nonzero rational number $\epsilon$. As
$\epsilon$ varies, the change of the theories gives a
wall-and-chamber structure on the parameter space.

When $\epsilon>0$, it is the CY side. The theory associated to
sufficiently large  $\epsilon$ is the Gromov--Witten theory of $X_W$; the theory
associated to sufficiently small $\epsilon>0$ is the stable quotient invariants \cite{mustacta2007intermediate, toda2011moduli, marian2011moduli,ciocan2014stable, ciocan2010moduli}. In the
recent work \cite{ciocan2016quasimap}, Ciocan-Fontanine and Kim established
an explicit wall-crossing formula relating these two theories for complete
intersections in projective spaces.

When $\epsilon<0$, it is the LG side and we have weighted FJRW theories. We call the theory associated to
sufficiently negative $\epsilon$ the $\infty$-FJRW invariant theory; we call the theory associated to
$\epsilon$ sufficiently close to $0$ the $0$-FJRW invariant theory.
Our work is the LG analogue of the work by Ciocan-Fontanine and
Kim\cite{ciocan2016quasimap}. We will prove a similar wall-crossing formula relating
the $\infty$-FJRW theory and the
$0$-FJRW theory, for extended narrow insertions.

We expect that the LG side and the CY side are more
directly related near $\epsilon =0$. Using wall-crossing Ross--Ruan proved
the LG/CY correspondence in genus $0$ \cite{ross2014wall}. In genus $1$, Guo--Ross used the wall-crossing formula
to compute the FJRW invariants of the quintic $3$-fold explicitly
\cite{guo2016genus} and verified the genus-$1$ LG/CY correspondence
\cite{guo2017genus}.
Our result generalizes their wall-crossing formulas to all genera. We hope this will be
useful for establishing the all-genera LG/CY correspondence. The higher-genus
wall-crossing formula is also proved by Clader--Janda--Ruan \cite{clader2017higherLG} using different
methods.

\subsection{The statement of the results}

Let $\phi_1,\cdots,\phi_r$ be formal symbols and $H_W$ be the $\mathbb
Q$-vector space spanned by $\phi_1,\cdots,\phi_r$. Following \cite{ross2014wall}, we call
$H_W$ the extended narrow state space.

We fix non-negative integers $g,m,n$. For integers $a_i,b_j\in \{1,\cdots,r\}, c_i\geq
0$ indexed by $i=1,\cdots,m$ and $j=1,\cdots,n$, we
study the $0$-FJRW invariants
\begin{equation}
  \label{intro_inv}
  \langle
  \psi^{c_1}\phi_{a_1},\cdots,\psi^{c_m}\phi_{a_m}|\phi_{b_1},\cdots,\phi_{b_n}
  \rangle^{0}_{g,m|n}\in \mathbb Q,
\end{equation}
which are defined via intersection theory on the moduli space parametrizing
triples $(C,L,p)$, where $C$ is a Hassett-stable twisted curve with $m$
orbifold markings $x_1,\cdots,x_m$ of weight $1$ and $n$ non-orbifold markings
$y_1,\cdots,y_n$ of weight $\epsilon$, for sufficiently small $\epsilon>0$; $L$ is a representable line bundle with
appropriate monodromies at each orbifold marking $x_i$; and $p$ is a non-vanishing section
\begin{equation}
  \label{intro_spin}
  p\in H^0\Big(C,L^{-r}\otimes \omega_{C}\big(\sum_{i=1}^m x_i+\sum_{b_j\neq r} (1-b_j)y_j + \sum_{b_j= r} y_j\big)\Big).
\end{equation}
Here the $\psi$ are the cotangent-line classes at the
markings on the coarse curves.
Because the weights of $x_i$ are $1$, they are called heavy markings;  because
the weights of $y_j$ are arbitrarily small, they are called light markings.
The monodromy of $L$ at the orbifold point $x_i$ is determined by the state
$\phi_{a_i}$ in (\ref{intro_inv}).
The light markings $y_j$ are non-orbifold points and the $b_j$ in
(\ref{intro_spin}) play the role of the monodromy (Section \ref{other_description}).
We will give a self-contained definition of these invariants in Section
\ref{weighted_FJRW}. When $2g-2+m < 0$ or $2g-2+m=n=0$, the moduli spaces are empty and we define the
invariants (\ref{intro_inv}) to be zero.

For any formal power series
\[
  \mathbf u = u_0 + u_1 \psi + u_2\psi^2 + \cdots \quad\! \text{and}\! \quad  \mathbf t = \sum_{j=1}^r t_j\phi_j,\quad\text{where
  } u_i=\sum_{j=1}^r u_{ij}\phi_j,
\]
we use  (\ref{intro_inv}) and multi-linearity to define
\[
  \langle \mathbf u,\cdots,\mathbf u| \mathbf t,\cdots,\mathbf t
  \rangle_{g,m|n}^{0}\in \mathbb Q[\![\{u_{ij},t_i\}]\!].
\]
We form a generating function of $0$-FJRW invariants
\[
  \mathcal F^{0}_{g}(\mathbf u,\mathbf t) = \sum_{m,n\geq 0}
  \frac{1}{m!n!}\langle \mathbf u^m| \mathbf t^n
  \rangle_{g,m|n}^{0},
\]
where $\mathbf u^m$ means $\mathbf u,\cdots,\mathbf u$, repeated $m$-times.

We call $\phi_a$ a narrow state if $aq_\alpha\notin \mathbb Z$ for all
$\alpha=1,\cdots,s$. This means that for each $\alpha$, the line bundle
$L^{w_\alpha}$ has nontrivial monodromy at the marking where $\phi_a$
is ``inserted''. Otherwise we call $\phi_a$ a broad state.
The invariant (\ref{intro_inv}) vanishes unless all the $\phi_{a_i}$ are narrow (Lemma
\ref{lem_Ramond_vanishing}). This is referred to as the Vanishing Axiom in
\cite{polishchuk2004witten}.

The $\infty$-FJRW theory is a special case of the $0$-FJRW theory.
It is by definition the $0$-FJRW theory with no light markings.
Thus we have
\[
  \langle \mathbf u^m \rangle_{g,m}^{\infty} =
  \langle \mathbf u^m|\varnothing \rangle_{g,m|0}^{0}\in \mathbb Q[\![u_{ij}]\!]
\]
and
\[
  \mathcal  F^{\infty}_g(\mathbf u) = \mathcal F^{0}_{g}(\mathbf u,0).
\]

We now explain the analogy to the CY side.
A map to $\mathbb P^n$ consists of a line bundle and $n$ sections without common
zeros.
In the stable quotient theory, we allow some common zeros of those sections.
In the $0$-FJRW theory, assuming that $b_j=2 $ for all $j$, we can view the
light markings as ``common'' zeros, as follows. We look at the
image $\bar p$ of $p$ under the natural inclusion
\[
  H^0\Big(C,L^{-r}\otimes \omega_{C}\big(\sum_{i=1}^m x_i- \sum_{j=1}
  y_j \big)\Big) \longrightarrow H^0(C,L^{-r}\otimes \omega_{C}\big(\sum x_i)).
\]
Then $y_j$ become the zeros of $\bar p$. That is analogous to the stable quotient theory.
While in the $\infty$-FJRW theory, there are no light markings. Hence $\bar p=p$
is required to be non-vanishing.
Hence it is analogous to the Gromov--Witten theory.

We now state the numerical wall-crossing formula.
We first define an explicit $H_W$-valued series $\mu(\mathbf t,z)$ related to
the $I$-function.
As usual, for $x\in \mathbb Q$, we denote by $\left \lfloor x \right \rfloor$
the largest integer $\leq x$ and $\langle x \rangle=x-\left \lfloor x
\right \rfloor $ the fractional part of $x$.
For any $B_n=(b_1,\cdots,b_n)$, where $b_j\in \{1,\cdots,r\}$ for each $j$, we define $k_{B_n}$
to be the integer such that
\[
  1\leq k_{B_n}\leq r \quad \text{and} \quad  k_{B_n} -1 \equiv \sum_{j=1}^n(b_j-1) \mod r.
\]
For each $\alpha=1,\cdots,s$, we define
\[
  \ell_{\alpha,B_n} = \bigg \lfloor   \sum_{j=1}^n\langle
  q_\alpha(b_j-1)\rangle\bigg \rfloor \quad \text{and} \quad  k_{\alpha,B_n}  =
  q_\alpha +
  \big\langle  q_\alpha (k_{B_n}-1) \big\rangle.
\]
For each integer $n$, we write $[x]_n =x(x+1)\cdots(x+n-1)$. We  define
\[
  \mu_{B_n}(z) = \prod_{\alpha=1}^s \left[ k_{\alpha,B_n} \right]_{\ell_{\alpha,B_n}} z^{1-n+ \Sigma_{\alpha} \ell_{\alpha,B_n}}
\]
and
\[
  \mu(\mathbf t,z) =\sum_{n\geq 1}\sum_{B_n}
  \frac{t_{b_1}\cdots t_{b_n}}{n!}
  \mu_{B_n}(z) \phi_{k_{B_n}}.
\]
Let $\mu^+(\mathbf t,z)$ be the truncation of $\mu(\mathbf t,z)$ consisting of all non-negative
powers of $z$.
The big $\mathbb
I$-function defined in \cite{ross2014wall} is our $z\phi_1+\mu^+(\mathbf t,z)$
(cf. Remark \ref{rmk_narrow}).

In this paper we prove
\begin{theorem}
  For $g\geq 1$, we have
  \label{thm_higher_genus_potential}
  \[
    \mathcal F^{0}_{g}(\mathbf u,\mathbf t) = \mathcal
    F^\infty_g(\mathbf u+\mu^+(\mathbf t,-\psi)).
  \]
  For $g=0$, the same equation holds modulo linear terms in the variables $\{u_{ij}\}$.
\end{theorem}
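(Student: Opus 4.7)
The plan is to adapt the master-space $\mathbb C^*$-localization technique of Ciocan-Fontanine--Kim for quasi-map wall-crossing to the Landau--Ginzburg moduli considered here: for each $(g, m, n)$, introduce a proper master Deligne--Mumford stack carrying a $\mathbb C^*$-action whose fixed loci at the two ends of the one-parameter family recover the $0$- and $\infty$-FJRW theories, and then extract the identity by virtual localization.

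\emph{Reduction.} Taylor-expanding $\mathcal F^\infty_g(\mathbf u + \mu^+(\mathbf t, -\psi))$ around $\mathbf u$ reduces the theorem to a family of coefficient identities, one for each $(g, m, n)$: the $0$-FJRW correlator with $m$ heavy and $n$ light insertions equals a specific sum of $\infty$-FJRW correlators in which the $m$ original heavy insertions are augmented by $m'$ additional heavy insertions of the form $\mu^+_B(-\psi)\phi_{k_B}$, indexed by ordered partitions of the $n$ light markings into nonempty blocks $B$ carrying the collected $b_j$-data. The numerical coefficients are accounted for by the multinomial expansion.

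\emph{Master space and localization.} For each $(g, m, n)$, I would construct a proper Deligne--Mumford master stack $\mathcal{MS}_{g, m|n}$ by enlarging the $0$-FJRW data $(C, L, p)$ with an additional ``tangent vector to the moduli problem'' as indicated in the abstract; this introduces a $\mathbb C^*$-action rescaling the new datum together with a $\mathbb C^*$-equivariant perfect obstruction theory. An appropriate equivariant lift of the usual $\psi$-class integrand has vanishing non-equivariant virtual pushforward to a point for equivariant-degree reasons, so virtual $\mathbb C^*$-localization forces the sum of residue contributions over the $\mathbb C^*$-fixed loci to vanish. The fixed loci split into a ``no-bubble'' component isomorphic to the $0$-FJRW moduli (reproducing the left-hand side of the coefficient identity) and components where a nonempty subset of the light markings has bubbled off into rational tails attached to a bulk $\infty$-FJRW curve with $m + m'$ heavy markings. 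Each rational tail contributes a vertex factor equal to the equivariant Euler class of the moving part of $H^0(\mathbb P^1, L^{-r} \otimes \omega_{\mathbb P^1}(\text{twists}))$, which by direct computation using the charges $q_\alpha$ and the twist prescription in \eqref{intro_spin} equals $\prod_\alpha [k_{\alpha, B}]_{\ell_{\alpha, B}}\, z^{\Sigma_\alpha \ell_{\alpha, B}}$; combined with the node-smoothing propagator and the identification $z \leftrightarrow -\psi$, this assembles into $\mu^+_B(-\psi)\phi_{k_B}$. Summing over partitions reconstructs the Taylor expansion of $\mathcal F^\infty_g(\mathbf u + \mu^+(\mathbf t, -\psi))$.

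\emph{Main obstacle.} The hardest step is the construction of $\mathcal{MS}_{g, m|n}$ with its $\mathbb C^*$-equivariant perfect obstruction theory of the correct virtual dimension, together with the precise matching of the bubble vertex contribution with $\mu^+_{B_n}(z)$. The latter requires a careful cohomology computation on a rational tail with prescribed monodromy at the root and at the light markings, in order to align the combinatorics of the fractional parts $\langle q_\alpha(b_j - 1)\rangle$ and of $k_{B_n}$ with the equivariant Euler class of the $p$-field contribution. The truncation ``$\mu^+$'' corresponds to only nonnegative powers of $z$ surviving the residue after pushforward; higher-order poles vanish against the equivariant factor on $F_0$. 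Finally, the $g = 0$ caveat accounts for the unstable range $2g - 2 + m \leq 0$ where the master stack degenerates, which is exactly the source of the linear $u_{ij}$-terms excluded in the statement.
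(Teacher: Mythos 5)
Your overall strategy (master space obtained by adding a tangent vector, $\mathbb C^*$-localization, residue extraction) is the paper's strategy, but there is a genuine gap in your fixed-point analysis, and it hides the main structural step of the argument. Adding a single tangent vector at \emph{one} light marking $y_1$ produces a master space whose fixed loci are: the locus where $y_1$ stays light, the locus where $y_1$ has become heavy, and loci with exactly \emph{one} rational bubble carrying $y_1$ together with some subset $J\ni 1$ of the other light markings; the bulk component of such a bubbled curve is still a $0$-type moduli space $\overline M^{1/r}_{g,\gamma_J}$ retaining the light markings $(y_j)_{j\notin J}$, not an $\infty$-FJRW curve. Configurations with several rational tails, or with all light markings already converted, are not $\mathbb C^*$-fixed for this action. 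Consequently a single localization does \emph{not} produce the full partition sum over blocks $B_1,\dots,B_h$ that you claim; it only produces the ``basic'' wall-crossing converting one light marking into a heavy one (Corollary \ref{cor_recursive}). To reach $\mathcal F^\infty_g(\mathbf u+\mu^+(\mathbf t,-\psi))$ you must iterate this relation, i.e.\ run an induction on the number $h$ of converted markings (this is Theorem \ref{thm_main}); without that induction, or without an explicit construction of a larger master space whose fixed loci really are the multi-bubble configurations you describe, the proof is incomplete.

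A second, smaller but substantive error: you attribute the bubble vertex factor to the equivariant Euler class of (the moving part of) $H^0(\mathbb P^1, L^{-r}\otimes\omega(\cdots))$, i.e.\ to the $p$-field. In this theory $p$ is part of the moduli data (the spin structure) and contributes nothing to the obstruction theory; the obstruction theory is built from the $\varphi$-fields, and the vertex factor $\prod_{\alpha}[k_{\alpha,B}]_{\ell_{\alpha,B}}z^{\Sigma_\alpha\ell_{\alpha,B}}$ arises as the inverse Euler class of $\bigoplus_\alpha H^1(E,L^{w_\alpha}(D_{\alpha,E}))$ on the bubble (Lemma \ref{normal_3}); the product over $\alpha$ cannot come from a single line bundle. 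You also omit the factor $z^{1-|J|}$ from the moving part of the deformations separating the collided light markings, which is needed for your vertex to assemble into $\mu_B$. Finally, note that the vanishing statement used in the paper is that the pushforward to $\overline M^{1/r}_{g,\gamma}$ of the honest equivariant class is polynomial in $z$ (Proposition \ref{prop_vanishing}), from which one extracts the $z^{-1}$-coefficient; your ``vanishing pushforward to a point'' is close in spirit but the paper's Chow-level formulation is what makes the subsequent induction and the $\psi$-class bookkeeping go through.
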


This numerical wall-crossing formula can be generalized in two ways. We can allow
$\psi$-class insertions at light marking and we can compare the virtual fundamental classes in
the Chow groups. They are both included in Theorem \ref{thm_main}.

This theorem is proved independently by Clader, Janda and Ruan
\cite{clader2017higherLG}. Indeed their theorem includes the hybrid
model case, assuming the existence of at least one heavy marking.
Our proof and the proof given in \cite{clader2017higherLG} both use master spaces, and use localization
to derive explicit wall-crossing formulas. Apart from this, the two proofs took different directions.
The master space used in \cite{clader2017higherLG} is constructed by introducing a new line bundle
paired with additional sections, and the proof is by induction on genus.

The master space used in this proof is constructed by introducing an additional
tangent vector at one light marking; the fixed-point components
correspond to the correction terms in the wall-crossing formula, which allows us
to obtain the wall-crossing term directly. This is motivated by the variation of
GIT by Thaddeus~\cite{thaddeus1996geometric}.

We now consider the Calabi--Yau case $q=1$ and restrict ourselves to $\mathbf t = t\phi_2$.
Namely we consider those $B_n$ where all $b_j=2$ and we set $t_2=t$. It follows that
$\mu^+(t\phi_2,z)$ only has degree-$0$ and degree-$1$ terms in $z$.
Define power series  $I_0(t)=1+O(t)$ and $I_1(t)=O(t)$ via
\[
  \mu^+(t\phi_2,z) = (I_0(t)-1)z\phi_1 + I_1(t)\phi_2.
\]

We set $\mathbf u=0$ in Theorem \ref{thm_higher_genus_potential} and apply the
dilation equation  for
the $\infty$-FJRW invariants \cite{fan2013witten} to get
\begin{corollary}
  When $q=1$,
  \[
    I_0(t)^{2g-2}
    \sum_{d\geq 0}\frac{t^d}{d!} \langle \phi_2,\cdots,\phi_2 \rangle_{g,d}^{0}
    =
    \sum_{n\geq 0}\frac{1}{n!}\left(  \frac{I_1(t)}{I_0(t)}\right)^n
    \left\langle \phi_2,\cdots, \phi_2\right\rangle_{g,n}^\infty , \text{ }g>1;
  \]
and
  \begin{equation}
  \label{cor_genus_1}
    \sum_{d\geq 1}\frac{t^d}{d!} \langle \phi_2,\cdots,\phi_2 \rangle_{1,d}^{0}
    =
    -\log(I_0(t))\langle \psi\phi_1 \rangle_{1,1}^\infty + \sum_{n\geq
1}\frac{1}{n!}\left( \frac{I_1(t)}{I_0(t)}\right)^n \left\langle \phi_2,\cdots,
\phi_2\right\rangle_{1,n}^\infty.
  \end{equation}
\end{corollary}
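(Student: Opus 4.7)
The plan is to apply Theorem \ref{thm_higher_genus_potential} with the specialization $\mathbf u = 0$ and $\mathbf t = t\phi_2$, then expand the $\infty$-FJRW side by multilinearity in the two non-vanishing components of $\mu^+(t\phi_2, -\psi) = -(I_0(t)-1)\psi\phi_1 + I_1(t)\phi_2$, and finally collapse the resulting double series using the dilaton equation together with elementary generating-function identities. Writing a marking count of $m=n+k$ that distributes as $n$ copies of $I_1(t)\phi_2$ and $k$ copies of $-(I_0(t)-1)\psi\phi_1$, the right-hand side of the theorem becomes
\[
\sum_{n,k\geq 0}\frac{I_1(t)^n\,(-(I_0(t)-1))^k}{n!\,k!}\bigl\langle \phi_2^n,(\psi\phi_1)^k\bigr\rangle^{\infty}_{g,n+k},
\]
while the left-hand side collapses to $\sum_{d\geq 0}(t^d/d!)\langle\phi_2^d\rangle^0_{g,d}$ because $\mathbf u = 0$ forces $m=0$.

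For $g > 1$ I apply the dilaton equation from \cite{fan2013witten} $k$ times to rewrite the bracket as $[2g-2+n]_k\langle\phi_2^n\rangle^\infty_{g,n}$; the rising-factorial structure is essential because the standard identity $\sum_k [a]_k x^k/k! = (1-x)^{-a}$ with $a=2g-2+n$ and $x=1-I_0(t)$ sums the $k$-series to $I_0(t)^{-(2g-2+n)}$. Factoring out the $n$-independent piece $I_0(t)^{-(2g-2)}$ and multiplying through by $I_0(t)^{2g-2}$ yields the first formula of the corollary.

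For $g = 1$ the dilaton reduction is valid for $n\geq 1$ (where $2g-2+n = n > 0$), and the computation above produces the sum $\sum_{n\geq 1}\tfrac{1}{n!}(I_1(t)/I_0(t))^n\langle\phi_2^n\rangle^\infty_{1,n}$. The $n=0$ diagonal must be treated separately: for $k\geq 1$, iterating the dilaton equation reduces $\langle (\psi\phi_1)^k\rangle^\infty_{1,k}$ to $(k-1)!\,\langle\psi\phi_1\rangle^\infty_{1,1}$, with $\langle\psi\phi_1\rangle^\infty_{1,1}$ genuinely irreducible because $2g-2+0=0$ is exactly the boundary case of the equation. The resulting series $\sum_{k\geq 1}(1-I_0(t))^k/k = -\log(I_0(t))$ supplies the logarithmic correction term in \eqref{cor_genus_1}, and on the left-hand side the $d=0$ correlator vanishes by the empty-moduli convention, matching the absence of an $n=0$ term on the right.

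The only point requiring care is the dilaton degeneracy at $g=1$, $n=0$: the irreducible invariant $\langle\psi\phi_1\rangle^\infty_{1,1}$ is precisely what the $-\log(I_0(t))$ coefficient records, and the two cases of the corollary differ exactly through this boundary behavior. Once the dilaton applications are organized so that the base case is never degenerated away, both formulas follow immediately from Theorem \ref{thm_higher_genus_potential}.
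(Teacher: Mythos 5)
Your proposal is correct and follows exactly the route the paper intends: the paper gives no details beyond "set $\mathbf u=0$ and apply the dilaton equation," and your expansion into the double sum over $n$ copies of $I_1(t)\phi_2$ and $k$ copies of $-(I_0(t)-1)\psi\phi_1$, the rising-factorial summation $\sum_k [2g-2+n]_k x^k/k! = (1-x)^{-(2g-2+n)}$, and the separate treatment of the $g=1$, $n=0$ diagonal producing $-\log(I_0(t))\langle\psi\phi_1\rangle^\infty_{1,1}$ is precisely the computation being invoked.
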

Formula (\ref{cor_genus_1}) recovers  Theorem 1.1 in \cite{guo2016genus}. Our
definition is slighted different from theirs. This is discussed in Section \ref{treat_light_markings}.
We can compute the genus-$0$ invariants and match the result of \cite{ross2014wall}, this is explained in
Section \ref{genus_0_case}.

\subsection{Plan of the paper}
In Section \ref{weighted_FJRW}, we briefly recall the definition of weighted
FJRW invariants. In Section \ref{Hassett_master},
we focus on the underlying coarse curves and construct a master space that
contains Hassett's moduli of weighted pointed curves of various weights.
The main result is the
properness of the master space.
In Section \ref{r_spin_master}, we define the $r$-spin master space.
We introduce a $\mathbb C^*$-action on the master space and study the
fixed-point components.
In Section \ref{phi_fields_master}, we construct an equivariant
perfect obstruction theory with an equivariant cosection. The localization
formula will give us a relation among different $0$-FJRW theories. In
Section \ref{wall_crossing_formulas}, we collect and package the relations from
the localization formula and prove our main theorems, including Theorem
\ref{thm_higher_genus_potential}. In Section \ref{genus_0_case}, we study the
genus-$0$ invariants and match the main theorem of \cite{ross2014wall}.

\subsection{Conventions}
In this paper we will work over the field of complex numbers. All scheme are
assumed to be locally noetherian over $\mathbb C$.  The genus of a curve
is always the arithmetic genus.
When $C$ is an orbifold curve and $x$ is
a marking of orbifold index $r$ (i.e. the automorphism group of $x$ is cyclic of
order $r$), the line bundle $\mathcal O_C(x)$ has degree $1/r$, as opposed to $1$.

For $x\in \mathbb Q$, we denote by $\left \lfloor x \right \rfloor$ the greatest integer no larger that
$x$ and $\langle x \rangle=x-\left \lfloor x \right \rfloor $ the fractional
part of $x$. For an integer $n\geq 0$, we abbreviate $x(x+1)\cdots(x+n-1)$ to $[x]_n$.

\subsection{Acknowledgments}
I am grateful to my advisor Jun Li for his inspiration and careful
guidance. I would like to thank the organizers of following conferences: the 2016 Chengdu
International Conference on Gromov--Witten theory, the 2016 Workshop on Global Mirror
Symmetry at Chern Institute and the 2017 FRG workshop ``Crossing the Walls in
Enumerative Geometry'' at Columbia. These conference exposed me to the most recent process
in the subject as well as many inspiring ideas. The talks by Ionu\c t Ciocan-Fontanine,
Shuai Guo, Felix Janda and Dustin Ross were especially helpful. I am also
grateful for helpful discussions with Huai-Liang Chang, Honglu Fan and Ming
Zhang.

\section{The weighted FJRW invariants}
\label{weighted_FJRW}

In this section, we give a self-contained description of the weighted FJRW
theory within the scope of this paper. This is a special case of the gauged
linear sigma models
defined in \cite{fan2015mathematical}. The construction of the virtual
fundamental class follows \cite{chang2015witten}.
\subsection{The moduli spaces of stable $r$-spin curves with weighted markings}

We fix non-negative integers $g,m,n$ such that $2g-2+m\geq 0$ and
$(2g-2+m,n)\neq (0,0)$. We will define the moduli spaces of genus-$g$
stable $r$-spin curves with $m$ heavy markings and $n$ light markings.
They are indexed by the discrete data
\[
  \gamma =
  \big(\frac{a_1}{r},\dots,\frac{a_m}{r}|\frac{b_1}{r},\dots,\frac{b_n}{r}\big),
  \quad a_i,b_j\in \{1,\cdots,r\}.
\]

Let $S$ be any scheme.
\begin{definition}
  \label{weighted_r_spin}
 An $S$-family of pre-stable  $r$-spin curves of genus $g$ with $\gamma$-weighted markings
is the datum
\[
  \xi=(C,\pi,x_1,\cdots,x_m;y_1,\dots ,y_n,L,p),
\]
where
\begin{itemize}
\item (curve) $\pi:C\to S$ is an $S$-family of prestable twisted curves of genus
  $g$ with balanced nodes and orbifold markings $x_1,\cdots,x_m$;
\item (heavy markings) $x_1,\dots,x_m$ are disjoint from each other;
\item (light markings) $y_1,\dots,y_n$ are not necessarily disjoint markings
  contained in $C^{\mathrm{sm}}\backslash \{x_1,\cdots,x_m\}$;
\item (line bundle) $L$ is a representable line bundle on $C$;
\item ($p$-field) $p\in H^0(C,P)$ is
  non-vanishing, where
  \[
    P = L^{-r}\otimes \omega_{C/S}\big(\sum_{i=1}^m x_i+\sum_{b_j\neq r} (1-b_j)y_j + \sum_{b_j= r} y_j\big).
  \]
\end{itemize}
\end{definition}
We will abbreviate $x_1,\cdots,x_m$ to $\mathbf x$ and abbreviate
$y_1,\cdots,y_n$ to $\mathbf y$. Thus $\xi = (C,\pi,\mathbf x;\mathbf y,L,p)$.
\begin{remark}
  Following\cite{abramovich2002compactifying} (cf.\cite{chiodo2008stable}),
  a family of
twisted curves $\pi:C\to S$  with  balanced nodes and orbifold markings $x_1,\cdots,x_m$
is a proper flat morphism of relative dimension $1$ from a
Deligne--Mumford stack $C$ to $S$, with closed substacks $x_i\subset C$, such that
\begin{enumerate}
\item the fibres are connected $1$-dimensional with at worst nodal
  singularities, the coarse moduli of $C$ is a nodal curve of genus $g$ over $S$ .
\item a local model of a node is $[U/\pmb{\mu}_{r^\prime}]\to T$ with
  $T=\mathrm{Spec~}A$, $U = \mathrm{Spec~}A[z,w]/(zw-t)$ for some $t\in A$, and
  the action is given by $(z,w)\mapsto (\zeta_{r^\prime}z,\zeta_{r^\prime}^{-1}w)$.
\item each $x_i$ is a closed substack of the relatively smooth locus
  $C^{\mathrm{sm}}\subset C$ and $\pi|_{x_i}$  is an \'etale gerbe banded by $\pmb\mu_{r^\prime}$.
\item $C^{\mathrm{sm}}\backslash \{x_1,\cdots,x_m\}$ is an algebraic space.
\end{enumerate}
The line bundle $L$ being representable means that near each orbifold point $x$
of $C$, the
automorphism group of $x$ acts faithfully on the fibre $L|_{x}$ of $L$ at $x$.
That  $L$ has monodromy $\frac{a_i}{r}$ at the orbifold marking $x_i$ means the generator
of $\pmb{\mu}_{r^\prime}$ acts
on $L|_{x_i}$ as multiplication by $\exp(\frac{2a_i\pi}{r}\sqrt{-1})$.
Representability of $L$
implies that $r^\prime = r/\gcd(a_i,r)$ is determined by $a_i$ and $r$. To
test the choice of the generator, we agree that
near $x_i$, $L$ is isomorphic to $L^\prime (\frac{a_ir^{\prime}}{r}x_i)$, where $L^\prime$
is the pullback of some line bundle on the coarse moduli space of $C$.

For degree reasons, such a prestable $r$-spin curve exists if and only if
\begin{equation}
  \label{eq:selection}
   2g-2 + \sum_{i=1}^m (1-a_i) + \sum_{j=1}^n (1-b_j) \equiv 0 \mod r.
\end{equation}
We assume that (\ref{eq:selection}) holds true throughout the paper.
\end{remark}
\begin{definition}
  \label{defn_pos}
  The datum $\xi$ is said to be stable if the $\mathbb Q$-line bundle
$\omega_{C/S}(\sum x_i + \epsilon\sum y_j)$ is relatively ample for all $\epsilon \in \mathbb
Q_{>0} $.
\end{definition}

We  denote the moduli space parametrizing such stable $\xi$ by $\overline
M^{1/r}_{g,\gamma}$. This is a smooth Deligne--Mumford stack of
dimension $3g-3+m+n$.

\subsection{The obstruction theory and the virtual fundamental class}
\label{obstruction_theory_single}
Let $\overline M^{1/r,\varphi}_{g,\gamma}$ be the stack of $S$-families of
genus-$g$ stable $r$-spin curves with $\gamma$-weighted markings and $\varphi$-fields:
\[(C,\pi,x_1,\cdots,x_m;y_1,\cdots,y_n,L,p,\varphi_1,\cdots,\varphi_s),\]
where
\[
  (C,\pi,\mathbf x;\mathbf y,L,p)\in \overline M^{1/r}_{g,\gamma}(S)
\]
and
\[
  \varphi_{\alpha} \in H^0\big(C,L^{w_\alpha}(D_{\alpha})\big), \quad
  D_{\alpha} = - \sum_{q_\alpha a_i \in \mathbb
     Z}x_i+\sum_{b_j\neq r}\left \lfloor
     q_\alpha (b_j-1) \right \rfloor y_j -\sum_{b_j= r}y_j.
\]
The $\varphi_{\alpha}$ are called $\varphi$-fields.

Let $\pi:{\mathcal  C}\to \overline M^{1/r,\varphi}_{g,\gamma}$ be the universal
curve, $\omega_{\pi}$ be the relative dualizing sheaf and
$\mathcal  L$ be the universal line bundle.
We use $x_i,y_j$ and $D_\alpha$ to denote the
divisors on $\mathcal  C$ as in the definition of $\overline M^{1/r,\varphi}_{g,\gamma}$.

Let $\tau:\overline M^{1/r,\varphi}_{g,\gamma}
\to \overline M^{1/r}_{g,\gamma}$ be the forgetful map and $\mathbb L_\tau$ be
its relative cotangent complex.
Then $\tau$ admits
a relative perfect obstruction theory
\[
  \left(  \mathbb L_{\tau}\right)^\vee \longrightarrow \bigoplus_{\alpha=1}^s R\pi_*  \mathcal L^{w_\alpha}(D_\alpha).
\]
The obstruction sheaf $\bigoplus_\alpha R^1\pi_* \mathcal
L^{w_\alpha}(D_\alpha)$ has a cosection
\[
  \sigma: \bigoplus_{\alpha=1}^s R^1\pi_* \mathcal  L^{w_\alpha}(D_\alpha)
  \longrightarrow  R^1\pi_*\omega_{\pi} \cong \mathcal  O_{\overline
    M^{1/r,\varphi}_{g,\gamma}}
\]
defined by
\[
  (\dot \varphi_1,\cdots,\dot\varphi_s) \longmapsto p\sum_{i=\alpha}^s
  \dot\varphi_\alpha \partial_\alpha W(\varphi_1,\cdots,\varphi_s).
\]
We now explain why $\sigma$ is well-defined.
Note that for each $\alpha$ and each $x_i$ such that $q_\alpha a_i \not\in \mathbb Z$,
$\mathcal  L^{w_\alpha}$ has nontrivial monodromy at $x_i$. Hence the
section $\varphi_\alpha$ must vanish along $x_i$ and thus is a section of
\[
  \pi_*\big(\mathcal  L^{w_\alpha}(
  - \sum_{i=1}^mx_i + \sum_{b_j\neq r}\left \lfloor
    q_\alpha (b_j-1) \right \rfloor y_j-\sum_{b_j= r}y_j)\big).
\]
Hence $p \dot\varphi_\alpha \partial_\alpha W(\varphi_1,\cdots,\varphi_s)$ is a
section of $ R^1\pi_* \omega_{\pi}\big(\!\!-\!\Delta \big)$
where
\[
\Delta = \sum_{i=1}^m (\frac{r}{w_\alpha}-2)x_i + \sum_{q_\alpha a_i\in
    \mathbb Z} x_i + \sum_{b_j\neq r}(b_j-1 - \frac{1}{q_\alpha}\left \lfloor
    q_\alpha(b_j-1) \right \rfloor ) y_j + \sum_{b_j = r}(\frac{r}{w_\alpha}-1) y_j.
\]
Since we have assumed that $r/w_\alpha\geq 2$, we see that $\Delta$ is effective
and hence $R^1\pi_*\omega_{\pi}(-\Delta)$
 naturally maps to $R^1\pi_*\omega_{\pi}$.

The degeneracy loci $D(\sigma)$ of $\sigma$ is defined to be the locus where
$\sigma$ is not surjective. As in \cite{chang2015witten}, Serre duality implies that $D(\sigma)$ is equal to $\overline
M^{1/r}_{g,\gamma}$, viewed as a closed subset of $\overline
M^{1/r,\varphi}_{g,\gamma}$  where all the $\varphi$-fields
are identically zero.
The cosection lifts to the absolute obstruction theory  and defines a cosection localized
virtual fundamental class \cite{behrend1997gromov,li1998virtual,chang2015witten}
$[\overline M^{1/r,\varphi}_{g,\gamma}]_{\mathrm{loc}}^{\mathrm{vir}}\in
A_*(\overline M^{1/r}_{g,\gamma})$
in degree
\[
  (3-s+2q)(g-1) + m + n - \sum_{\alpha=1}^s\Big(
  \sum_{i=1}^m \langle q_\alpha (a_i-1) \rangle + \sum_{j=1}^n \langle
  q_\alpha (b_j-1) \rangle\Big).
\]
\subsection{Ramond vanishing}
We say that the heavy marking
$x_i$ is narrow if $q_\alpha a_i\not\in \mathbb Z$  for all $\alpha$. Otherwise
we say $x_i$ is broad. Broad heavy markings will naturally appear in our construction
even if we have started with narrow heavy markings only.

However, note that as the $\phi_\alpha$ are required to vanish along the broad heavy
markings, they are indeed ``fake'' broad markings, and we have the following
vanishing result from \cite{polishchuk2004witten}:

\begin{lemma}[\!\!\cite{polishchuk2004witten}]
  \label{lem_Ramond_vanishing}
  The class  $[\overline
  M^{1/r,\varphi}_{g,\gamma}]_{\mathrm{loc}}^{\mathrm{vir}}$ vanishes unless all
  heavy markings are narrow.
\end{lemma}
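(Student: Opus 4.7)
\emph{Proof proposal.}

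The plan is to adapt the Polishchuk--Vaintrob vanishing argument to the $\varphi$-field formulation used here, by exhibiting at any broad heavy marking a sub-line bundle of the obstruction sheaf that lies in the kernel of the cosection $\sigma$, and then concluding via cosection localization.

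Fix a broad heavy marking $x_i$ and an index $\alpha$ with $q_\alpha a_i\in\mathbb Z$. Two features of the setup are key: $\mathcal L^{w_\alpha}$ has trivial monodromy at $x_i$, so its restriction to the gerbe $x_i$ descends to a line bundle $\mathbf L_i$ on the base $\overline M^{1/r,\varphi}_{g,\gamma}$; and the divisor $D_\alpha$ contains $-x_i$ with multiplicity one, so every section $\varphi_\alpha\in H^0(\mathcal C,\mathcal L^{w_\alpha}(D_\alpha))$ is forced to vanish at $x_i$. First I would consider the short exact sequence
\[
  0\to \mathcal L^{w_\alpha}(D_\alpha) \to \mathcal L^{w_\alpha}(D_\alpha+x_i) \to \iota_{x_i,*}\mathbf L_i\to 0
\]
on the universal curve $\mathcal C$, where $\iota_{x_i}\colon x_i\hookrightarrow \mathcal C$ denotes the inclusion. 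Taking $R\pi_*$ yields a connecting homomorphism $\mathbf L_i\to R^1\pi_*\mathcal L^{w_\alpha}(D_\alpha)$, whose image consists of obstruction classes localized at $x_i$. On such a class $\dot\varphi_\alpha$, the cosection returns $(r/w_\alpha)p\dot\varphi_\alpha\varphi_\alpha^{r/w_\alpha-1}$; since $\varphi_\alpha|_{x_i}=0$ and $r/w_\alpha-1\geq 1$, the factor $\varphi_\alpha^{r/w_\alpha-1}$ vanishes at $x_i$, so $\sigma$ annihilates the image of $\mathbf L_i$.

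Having isolated this trivial-cosection direction, I would conclude the vanishing by an auxiliary $\mathbb C^*$-action and equivariant cosection localization. The idea is to enlarge the moduli to $\widetilde M$ by dropping the requirement that $\varphi_\alpha$ vanish at $x_i$, so that $\widetilde M\to\overline M^{1/r,\varphi}_{g,\gamma}$ becomes an $\mathbf L_i$-bundle carrying a fiberwise $\mathbb C^*$-action. This action lifts to the relative obstruction theory and to the cosection, with the $\mathbf L_i$-piece carrying nonzero weight. Equivariant Kiem--Li cosection localization then forces the corresponding virtual class on $\widetilde M$ to be supported on the fixed locus in excess virtual dimension, and pushing forward along $\widetilde M\to\overline M^{1/r,\varphi}_{g,\gamma}$ yields the desired vanishing. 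The main obstacle I anticipate is setting up the obstruction theory on $\widetilde M$ and its cosection in a fully $\mathbb C^*$-equivariant way compatible with the forgetful map; this is precisely the technical bookkeeping that the matrix-factorization framework of \cite{polishchuk2004witten} handles at the level of state spaces.
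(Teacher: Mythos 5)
Your opening move is sound and is indeed the germ of the Polishchuk--Vaintrob mechanism: since every $\varphi_\alpha$ vanishes along the broad heavy marking $x_i$ and $r/w_\alpha\geq 2$, the multiplication map $\dot\varphi\mapsto p\,\dot\varphi\,\varphi_\alpha^{r/w_\alpha-1}$ extends from $\mathcal L^{w_\alpha}(D_\alpha)$ to $\mathcal L^{w_\alpha}(D_\alpha+x_i)$, so on $R^1\pi_*$ the $\alpha$-component of $\sigma$ factors through $R^1\pi_*\mathcal L^{w_\alpha}(D_\alpha+x_i)$ and therefore annihilates the image of the connecting homomorphism $\mathbf L_i\to R^1\pi_*\mathcal L^{w_\alpha}(D_\alpha)$. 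The problem is that this observation does not, by itself, imply vanishing of the cosection-localized class, and your second step does not close that gap. Exhibiting a subsheaf of the obstruction sheaf inside $\ker\sigma$ is never enough: if, say, the obstruction bundle were $L_1\oplus L_2$ over a smooth base with $\sigma$ supported on the $L_1$-factor, the localized class would be $e(L_2)$ capped with the localized Euler class of $(L_1,\sigma_1)$, which is generally nonzero. Some additional structural input is needed, and this is exactly the hard content of Theorem 2.1 of Polishchuk--Vaintrob.

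The auxiliary $\mathbb C^*$-construction you propose to supply that input is set up backwards. Relaxing the vanishing of $\varphi_\alpha$ at $x_i$ yields a space $\widetilde M$ that \emph{contains} $\overline M^{1/r,\varphi}_{g,\gamma}$ as the zero locus of the tautological evaluation section of (the pullback of) $\mathbf L_i$; it is not an $\mathbf L_i$-bundle \emph{over} $\overline M^{1/r,\varphi}_{g,\gamma}$, because a section of $\mathcal L^{w_\alpha}(D_\alpha+x_i)$ has no canonical splitting into a section of $\mathcal L^{w_\alpha}(D_\alpha)$ plus its value at $x_i$. Hence there is no fiberwise $\mathbb C^*$-action to localize with, and even granting one, the claim that the fixed locus sits ``in excess virtual dimension'' is unsubstantiated: the degeneracy locus $\{\varphi\equiv 0\}$ is itself fixed under any scaling of the $\varphi$-fields, so no dimension count rules out a nonzero contribution. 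For comparison, the paper does not attempt a direct proof at all: with no light markings it cites Polishchuk--Vaintrob together with the Chang--Li--Li identification of the two constructions; with light markings it cites the proof of Ross--Ruan's Lemma 1.8 in genus $0$, and for $g\geq 1$ it reduces to the no-light-marking case via the wall-crossing theorem (whose proof is independent of this lemma, so there is no circularity). If you want a self-contained argument you would essentially have to reprove PV's theorem, which requires their matrix-factorization machinery rather than cosection bookkeeping.
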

\begin{proof}
  If there are no light markings, this follows from Theorem 2.1 of
  \cite{polishchuk2004witten}. The equivalence between the cosection
  construction and Polishchuk's construction is established in
  \cite{chang2015witten}.
  We are not able not find a direct and explicit reference in the presence of light
  markings. Instead, if $g=0$,  the lemma follows from the proof of Lemma 1.8 in
  \cite{ross2014wall};
  if $g\geq 1$, we appeal to the wall-crossing in Theorem \ref{thm_main} and the lemma follows
  from the case when there are no light markings.
\end{proof}

\begin{remark}
The main theorems in this paper does not depend on  Lemma
  \ref{lem_Ramond_vanishing}.
 Indeed Lemma \ref{lem_Ramond_vanishing} only implies that many terms in the
 wall-crossing formulas turn out to be zero. Hence there is no circular reasoning in the proof above.
\end{remark}
\subsection{The extended narrow state space and weighted FJRW invariants}
\label{subsec_invariants}
We have introduced the $\mathbb Q$-vector space $H_W$ whose basis consists of the
formal symbols
$\phi_1,\cdots,\phi_{r}$. Let $\gamma$ be as before and
$\psi_{x_i},\psi_{y_j}$ be the cotangent-line classes on the
coarse curves. We define the $0$-FJRW invariants with descendants to be
\begin{equation}
  \label{defn_invariants}
  \begin{aligned}
   \langle  \psi^{c_1}\phi_{a_1},\cdots,\psi^{c_m}\phi_{a_m}|&
  \psi^{d_1}\phi_{b_1},\cdots,\psi^{d_n}\phi_{b_n}\rangle^{0}_{g,\gamma}                       \\
  :=& \epsilon_\gamma\cdot\int_{[\overline
    M^{1/r,\varphi}_{g,\gamma}]_{\mathrm{loc}}^{\mathrm{vir}}}
  \psi_{x_1}^{c_1}\cdots\psi_{x_m}^{c_m}
  \psi_{y_1}^{d_1}\cdots\psi_{y_n}^{d_n}.
  \end{aligned}
\end{equation}
where the constant
\[
  \epsilon_\gamma  = \frac{1}{r^{g-1}} (-1)^{(2q-s)(g-1)-\sum_{\alpha}(\sum_i
    \langle q_\alpha(a_i-1) \rangle + \sum_j \langle q_\alpha(b_j-1) \rangle)}
\]
is introduced to be consistent with the original definition of Fan--Jarvis--Ruan.
The sign is just $(-1)^{\Sigma_\alpha {\chi (R\pi_*\mathcal
    L^{w_\alpha}(D_\alpha))}}$. The $\infty$-FJRW invariants are defined to be
\[
   \langle  \psi^{c_1}\phi_{a_1},\cdots,\psi^{c_m}\phi_{a_m}\rangle^{\infty}_{g,\gamma}
  := \langle  \psi^{c_1}\phi_{a_1},\cdots,\psi^{c_m}\phi_{a_m}|\varnothing\rangle^{0}_{g,\gamma}.
\]
Both of them are referred to as weighted FJRW invariants.

\subsection{The twisted theory}
We can also consider the twisted theory.
Let \[(\mathcal  C,\pi,\mathbf x;\mathbf y,\mathcal  L,p) \] be the
universal family over $\overline{M}_{g,\gamma}^{1/r}$.
As before let $D_\alpha$ be the
divisor on $\mathcal  C$ defined by
\[
 D_{\alpha} = - \sum_{q_\alpha a_i \in \mathbb
     Z}x_i+\sum_{b_j\neq r}\left \lfloor
     q_\alpha (b_j-1) \right \rfloor y_j -\sum_{b_j= r}y_j.
\]
Let $\mathbb S=(\mathbb
C^*)^s$ be the torus that acts trivially on $\overline{M}_{g,\gamma}^{1/r}$ and
acts on the fibres of $\bigoplus_{\alpha=1}^s
\mathcal  L^{w_\alpha}(D_\alpha)$ by weights $(w_1,\cdots,w_s)$. We denote the
equivariant parameters by $\lambda_1,\cdots,\lambda_s$.
We define the extended narrow equivariant state space
\[
  H_W^{\lambda} = H_W\otimes \mathbb Q(\lambda_1,\dots,\lambda_s),
\]
and the equivariant virtual fundamental class
\[
  \big[ \overline{M}^{1/r}_{g,\gamma} \big]^{\mathrm{vir}}_{\mathbb S} = \big[
  \overline{M}^{1/r}_{g,\gamma}\big] \cap c_{\mathrm{top}}^{\mathbb
    S}\Big(\bigoplus_{\alpha=1}^s R\pi_*
\mathcal  L^{w_\alpha}(D_\alpha)[-1] \Big).
\]
The twisted weighted FJRW invariants are defined in the same way as in the
untwisted case in Section \ref{subsec_invariants}, with
$\big[\overline M^{1/r,\varphi}_{g,\gamma}\big]_{\mathrm{loc}}^{\mathrm{vir}}$ replaced by
$\big[ \overline{M}^{1/r}_{g,\gamma} \big]^{\mathrm{vir}}_{\mathbb S}$. We will focus on the
untwisted case. The results of this paper also work in the twisted case if we
modify one definition (Remark \ref{rmk_twisted}).
\subsection{Orbifold markings v.s. non-orbifold markings}
\label{other_description}

We have treated heavy markings as orbifold markings and light markings as
non-orbifold markings. Actually orbifold markings and non-orbifold markings are
equivalent in the following sense.
Suppose $\rho:C\to |C|$ is the partial coarse moduli forgetting the orbifold
structure only at $x_1,\cdots,x_m$ but not at the nodes. Let $\bar x_i$, $\bar y_j$ be the images of
$x_i,y_j$. Then we have a natural isomorphism
\begin{equation}
  \label{no_orbi_varphi}
  \begin{aligned}
  \rho_*\big(&L^{w_\alpha}(D_\alpha)\big) \cong   \\
  &(\rho_* L)^{w_\alpha} \otimes {\mathcal  O_{|C|}} \big(
  \sum_{a_i\neq r}\left \lfloor
    q_\alpha (a_i-1) \right \rfloor \bar x_i -\sum_{a_i= r} \bar x_i
  +\sum_{b_j\neq r}\left \lfloor
    q_\alpha (b_j-1) \right \rfloor \bar y_j -\sum_{b_j= r}\bar y_j
  \big).
  \end{aligned}
\end{equation}
Moreover for
\[
  P = L^{-r}\otimes \omega_{C}\big(\sum_{i=1}^m x_i +\sum_{b_j\neq r} (1-b_j)y_j+\sum_{b_j=r}y_j\big),
\]
we have
\begin{equation}
  \label{no_orbi_P}
  \rho_*P =(\rho_*L)^{-r}\otimes\omega_{|C|}\big(\sum_{a_i\neq r}(1-a_i)\bar x_i +\sum_{a_i=r}\bar x_i
  + \sum_{b_j\neq r}(1-b_j)\bar y_j+\sum_{b_j=r}\bar y_j\big).
\end{equation}
By  Theorem 4.2.1 of \cite{abramovich2008gromov}, there is a unique way to add
the stack structure at $x_i$.
Hence in Definition \ref{weighted_r_spin}, if we require that the curve has no
orbifold structure at each $x_i$, and replace the definition of $P$
by the right hand side of (\ref{no_orbi_P}) with $L$ in place of $\rho_*L$, we
get the same moduli space. Moreover, since $\rho_*$ is an exact functor, if we define the
$\varphi$-fields as sections of the right hand side of (\ref{no_orbi_varphi}), Section
\ref{obstruction_theory_single} works verbatim with the new definition and
defines the same virtual fundamental class under the natural identification
of the moduli spaces. In this sense we are treating heavy markings and light markings on
an equal footing.

We can also replace only a subset of orbifold heavy
markings by non-orbifold heavy markings. This will be useful when we
``transform'' a light marking into a heavy marking in Section \ref{phi_fields_master}.
\begin{remark}
  We insist that the heavy markings are orbifold markings because when we
  ``split'' a node, we get a pair of orbifold heavy markings; we insist that the
  light markings are non-orbifold markings because we do not want to consider
  two colliding orbifold markings.
\end{remark}

\subsection{Various treatments of the light markings}
\label{treat_light_markings}
There are at least two different ways to treat the light markings. In
\cite{ross2014wall} and in this paper, the light markings are ordered. While in
\cite{fan2015mathematical, guo2016genus}, the light markings are unordered,
meaning that we only remember the divisor $y_1+\cdots+y_n$.
These two treatments are equivalent, differing by a factor of $n!$.
The argument is simple and we omit it here.

\section{The moduli of stable curves with mixed weighted markings}
\label{Hassett_master}
In \cite{hassett2003moduli}, Hassett defined the moduli of stable curves with
weighted markings. In this paper, we consider the following special case:  moduli space $\overline M_{g,m|n}$ of stable curves
$\pi:C\to S$ with $m$ weight-$1$ markings $x_1,\cdots,x_m$ and $n$ weight-$\epsilon$ markings
$y_1,\cdots,y_n$, where $\epsilon>0$ is sufficiently small.  The stability
condition for $\overline M_{g,m|n}$ is
\begin{enumerate}
\item $\pi:C\to S$ is a family of nodal curves and all the markings are contained in the relative smooth locus of $C$;
\item each $x_i$ does not intersect any other markings;
\item the $\mathbb Q$-line bundle $\omega_{C/S}(\sum x_i + \epsilon \sum y_j)$
  is relatively ample for all $\epsilon\in \mathbb Q_{>0}$.
\end{enumerate}
We call $x_i$ the heavy markings and  $y_j$ the light markings.

In this section, we will construct the ``master'' moduli space $\widetilde{M}_{g,m|n}$ of
genus-$g$ stable curves with {\it mixed} $(m,n)$-weighted markings
$x_1,\cdots,x_m;y_1,\cdots,y_n$. In this moduli, the $x_i$ behave like heavy
markings; $y_2,\cdots,y_n$ behave like light markings; $y_1$ is a ``mixed
weighted'' marking. The stack $\widetilde{M}_{g,m|n}$ contains both
$\overline{M}_{g,m|n}$ and $\overline{M}_{g,m+1|n-1}$ as closed substacks. We
do not consider $r$-spin structures in this section. Thus we only consider
non-orbifold curves here.

\subsection{The construction of $\tilde M_{g,m|n}$}
\label{subsec_curves}
We fix non-negative integers $g,m,n$ such that $2g-1+m\geq 0$, $n\geq 1$ and
$(2g-1+m,n-1)\neq (0,0)$.
Let $S$ be any scheme.
\begin{definition}
  \label{defn_M_tilde}
  An $S$-family of genus-$g$ stable curves with {\it mixed} $(m,n)$-weighted markings
  consists of $(C,\pi,x_1,\cdots,x_m;y_1,\cdots,y_n,N,v_1,v_2)$, where
  \begin{enumerate}
  \item $\pi:C\to S$ is a flat proper family of connected genus-$g$ nodal
    curves;
  \item
    $x_1,\cdots,x_m,y_1,\dots,y_n$ are markings
    contained in the relative smooth locus $C^{\mathrm{sm}}\subset C$;
  \item $N$ is a line bundle on $S$;
  \item $v_1\in H^0(S,T_{y_1}\otimes_{\mathcal  O_S} N)$ and  $v_2 \in H^0(S,N)$, where $T_{y_1} =
    \omega_{C/S}^\vee |_{y_1}$ .
  \end{enumerate}
  such that
  \begin{enumerate}
  \item each $x_i$ is disjoint from all other markings;
  \item $v_1$ and $v_2$ do not have any common zero on $S$;
  \item the $\mathbb Q$-line bundle $\omega_{C/S}(\sum_{i=1}^m x_i + y_1+
    \epsilon \sum_{j=2}^n y_j)$ is relatively ample for all $\epsilon\in
    \mathbb Q_{>0}$;
  \item when $v_1=0$, $y_1$ does not intersect other light markings
    $y_j,j=2,\cdots,n$;
  \item when $v_2=0$, the $\mathbb Q$-line bundle $\omega_{C/S}(\sum_{i=1}^m x_i+
    \epsilon \sum_{j=1}^n y_j)$ is relatively ample for all $\epsilon\in
    \mathbb Q_{>0}$;
  \end{enumerate}
\end{definition}

Let
\[
  \xi^\prime=(C^\prime,\pi^\prime,\mathbf x^\prime;\mathbf y^\prime,N^\prime,v^\prime_1,v^\prime_2)
  \quad \text{and} \quad  \xi = (C,\pi,\mathbf x;\mathbf y,N,v_1,v_2)
\]
be  two families of genus-$g$ stable curves with
mixed $(m,n)$-weighted markings over $S^\prime$ and $S$, respectively.
An arrow $\xi^\prime\to \xi$ consists of fibred diagram
\[
  \xymatrix{
    C^\prime \ar[r]^f\ar[d]_{\pi^\prime} & C \ar[d]^\pi\\
    S^\prime \ar[r]^{g} & S}
\]
and an isomorphism $\eta:N^\prime\to g^*N$ of line bundles
such that $f$ pulls back the markings to the corresponding markings,
$\eta(v_2^\prime)=g^*v_2$ and $(df_{y^\prime_1}\otimes \eta) (v_1^\prime) = g^*v_1$.

This defines the category $\widetilde{M}_{g,m|n}$ of genus-$g$ stable curves
with mixed $(m,n)$-weighted markings. It is fibred in groupoids over the category of schemes .
\begin{theorem}
  \label{representability}
  The category $\widetilde{M}_{g,m|n}$ is a
  smooth Deligne--Mumford stack of finite type over $\mathbb C$, of dimension $3g-2+m+n$.
\end{theorem}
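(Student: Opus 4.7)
The plan is to realize $\widetilde{M}_{g,m|n}$ as an open substack of a $\mathbb{P}^{1}$-bundle over a smooth Artin stack of prestable curves with markings, so that smoothness and the dimension formula are immediate, and the Deligne--Mumford and finite-type properties follow from the stability conditions. First I would introduce the Artin stack $\mathfrak{B}$ whose $S$-points parametrize data $(\pi\colon C\to S,\mathbf{x},\mathbf{y})$ satisfying conditions (1), (2), and (3) of Definition \ref{defn_M_tilde}. The stack of prestable genus-$g$ nodal curves equipped with $m$ ordered pairwise-disjoint markings and $n$ further possibly-coincident ordered markings, all in the smooth locus, is the appropriate fiber product of the smooth universal curve with itself and is a smooth Artin stack of dimension $3g-3+m+n$. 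Condition (3) is open because relative ampleness of a $\mathbb{Q}$-line bundle is an open condition, so $\mathfrak{B}$ itself is smooth of the same dimension. Write $T_{y_{1}}:=\sigma_{y_{1}}^{*}\omega_{C/S}^{\vee}$ for the relative tangent line at $y_{1}$, a line bundle on $\mathfrak{B}$.

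Next I would form the projective bundle
\[
\mathcal{P}:=\mathbb{P}_{\mathfrak{B}}\bigl(T_{y_{1}}\oplus\mathcal{O}_{\mathfrak{B}}\bigr),
\]
a smooth Artin stack of dimension $3g-2+m+n$. The quotient description $\mathbb{P}^{1}=[(\mathbb{A}^{2}\setminus\{0\})/\mathbb{C}^{*}]$ identifies $\mathcal{P}(S)$ with the groupoid of tuples $(C,\mathbf{x},\mathbf{y},N,v_{1},v_{2})$ where $(C,\mathbf{x},\mathbf{y})\in\mathfrak{B}(S)$, $N$ is a line bundle on $S$, and $(v_{1},v_{2})\in H^{0}(T_{y_{1}}\otimes N)\oplus H^{0}(N)$ is nowhere vanishing, with morphisms coming from rescaling $N$. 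Thus $\mathcal{P}$ realizes all of Definition \ref{defn_M_tilde} except for the conditional stability conditions (4) and (5).

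I would then verify that $\widetilde{M}_{g,m|n}$ is the \emph{open} substack of $\mathcal{P}$ cut out by (4) and (5). Let $V_{i}:=\{v_{i}=0\}\subset\mathcal{P}$, both closed. Condition (4) removes from $V_{1}$ the sublocus where $y_{1}$ coincides with some $y_{j}$, $j\geq 2$; since coincidence of sections is closed, this sublocus is closed in $V_{1}$, hence closed in $\mathcal{P}$. Condition (5) removes from $V_{2}$ the sublocus where the $\mathbb{Q}$-line bundle $\omega_{C/S}\bigl(\sum_{i}x_{i}+\epsilon\sum_{j}y_{j}\bigr)$ fails to be relatively ample, which is again closed in $V_{2}$ and hence in $\mathcal{P}$. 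So $\widetilde{M}_{g,m|n}$ is the complement of two closed substacks of $\mathcal{P}$, hence an open smooth Artin substack of dimension $3g-2+m+n$.

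Finally I would verify the Deligne--Mumford property and finite type. An automorphism of a tuple consists of an automorphism $\phi$ of $(C,\mathbf{x},\mathbf{y})$ together with a scalar $\lambda$ on $N$ satisfying $\lambda v_{2}=v_{2}$ and $\lambda(d\phi)_{y_{1}}v_{1}=v_{1}$; the non-vanishing of $(v_{1},v_{2})$ determines $\lambda$ from $\phi$, so the automorphism group of the tuple is identified with a subgroup of $\mathrm{Aut}(C,\mathbf{x},\mathbf{y})$. On $\{v_{2}\neq 0\}$ condition (3) is exactly Hassett stability with weights $(m+1,n-1)$, and on $V_{2}$ condition (5) is Hassett stability with weights $(m,n)$; in both cases the automorphism group of the underlying curve with markings is finite. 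Finite type then follows from Hassett's boundedness combined with the properness of the $\mathbb{P}^{1}$-fibers of $\mathcal{P}\to\mathfrak{B}$. The main obstacle will be the third step: one must carefully check that the stability failures imposed by (4) and (5) are closed substacks of the entire stack $\mathcal{P}$ and not merely of the strata $V_{i}$, so that the desired locus is genuinely open and the smooth structure of $\mathcal{P}$ descends to $\widetilde{M}_{g,m|n}$.
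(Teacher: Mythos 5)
Your construction is the same as the paper's: realize $\widetilde{M}_{g,m|n}$ as an open substack of the $\mathbb{P}^1$-bundle $\mathbb{P}(T_{y_1}\oplus\mathcal{O})$ over a finite-type smooth Artin stack of marked prestable curves, read off smoothness and the dimension, and then check finiteness of automorphisms. (The paper bounds the base by capping the number of irreducible components at $2g-2+m+n$, while you impose condition (3) directly; these are interchangeable since (3) bounds the number of components.) Your openness discussion is fine: the loci excluded by (4) and (5) are closed inside $V_1$ and $V_2$ respectively, hence closed in $\mathcal{P}$.

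There is, however, a genuine gap in your Deligne--Mumford step. You reduce finiteness of the automorphism group of a tuple to finiteness of $\mathrm{Aut}(C,\mathbf{x},\mathbf{y})$, asserting that on $\{v_2\neq 0\}$ condition (3) ``is exactly Hassett stability with weights $(m+1,n-1)$.'' It is not: Hassett stability with $y_1$ of weight $1$ would forbid $y_1$ from colliding with any other marking, whereas when $v_1\neq 0$ and $v_2\neq 0$ such collisions are allowed. Concretely, take $C=C_0\cup E$ with $E$ a smooth rational tail meeting $C_0$ at one node, carrying $y_1=y_2$ and no other markings; conditions (1)--(5) all hold (with $v_1,v_2$ both nonzero), yet $\mathrm{Aut}(C,\mathbf{x},\mathbf{y})$ contains the $\mathbb{G}_m$ of automorphisms of $E$ fixing the node and the point $y_1=y_2$. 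These are exactly the points of the fixed loci $F_J$ that drive the localization later in the paper, so they cannot be dismissed. The automorphism group of the \emph{tuple} is nonetheless finite, but for a reason you set up and then discard: when $v_2\neq 0$ the compatibility forces $\lambda=1$ and hence $(d\phi)_{y_1}v_1=v_1$, and since $v_1\neq 0$ this kills the scaling of $E$, i.e.\ the tangent vector at $y_1$ rigidifies the bubble. So the correct statement is that $\mathrm{Aut}(\xi)$ is the subgroup of $\mathrm{Aut}(C,\mathbf{x},\mathbf{y})$ preserving the point $[v_1:v_2]\in\mathbb{P}(T_{y_1}\oplus\mathcal{O})$, and one checks case by case (using (3) together with (4), (5), and the rigidification by $v_1$) that this subgroup is finite; it does not follow from finiteness of $\mathrm{Aut}(C,\mathbf{x},\mathbf{y})$ alone.
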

\begin{proof}
  Let $\mathfrak M$ be the Artin stack of genus-$g$ nodal curves $C$
  with $m+n$ not necessarily distinct markings $x_1,\cdots,x_m,y_1,\cdots,y_n$
  in the smooth locus of $C$,
  such that  $C$ has at
  most $2g-2+m+n$ irreducible components. This is a finite type smooth Artin
  stack of dimension $3g-3+m+n$. Let $T_{y_1}$ be the line
  bundle on $\mathfrak M$ formed by the tangent spaces to the curves at $y_1$. Let $\mathbb  P$
  be the projective bundle $\mathbb P_{\mathfrak M}(T_{y_1}\oplus \mathcal
  O_{\mathfrak M})$ over $\mathfrak M$. Then $\widetilde{M}_{g,m|n}$ is represented by an open substack
  of $\mathbb  P$, hence represented by an Artin stack of finite type.
  It is easy to see that each closed point of
  $\widetilde{M}_{g,m|n}$ has a finite automorphism group. Hence
  $\widetilde{M}_{g,m|n}$ is a Deligne--Mumford stack \cite{olsson2016algebraic}.
  \end{proof}
\begin{remark}
  \label{rmk_effect_v_2}
  If $v_2$ is nowhere vanishing, it gives an isomorphism $N \cong
  \mathcal O_S$ sending $v_2$ to $1$. Hence the $(N,v_1,v_2)$ part of $\xi$ is
  equivalent to $v_1/v_2\in H^0(S,T_{y_1})$. When $v_2=0$, $v_1$ is
  non-vanishing and gives an isomorphism $N\cong T_{y_1}^\vee$. Thus
  at every closed point $s\in S$, we can view $(N,v_1,v_2)$ as a point of
  $T_{y_1}C_s\cup \{\infty\}$.
\end{remark}
\begin{remark}
  The universal $v_1$ and $v_2$ are sections of certain line bundles over  $\widetilde{M}_{g,m|n}$.
  The vanishing locus of $v_1$ is isomorphic to $\overline
  M_{g,m+1|n-1}$, where $y_1$ is a heavy marking; the vanishing locus of $v_2$ is isomorphic to $\overline
  M_{g,m|n}$, where $y_1$ is a light marking.

\end{remark}
\subsection{The properness of $\widetilde{M}_{g,m|n}$}
\begin{theorem}
  The stack $\widetilde{M}_{g,m|n}$ is proper.
\end{theorem}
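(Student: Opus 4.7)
The plan is to verify the valuative criterion for properness. Fixing a DVR $R$ with fraction field $K$, residue field $k$, and uniformizer $\pi$, the goal is to show that any $K$-point of $\widetilde{M}_{g,m|n}$ extends uniquely to an $R$-point, possibly after finite base change (as is standard for Deligne--Mumford stacks). My strategy is based on the description from the proof of Theorem~\ref{representability} of $\widetilde{M}_{g,m|n}$ as an open substack of the $\mathbb{P}^1$-bundle $\mathbb{P}_{\mathfrak{M}}(T_{y_1}\oplus \mathcal{O}_{\mathfrak{M}})$ cut out by the stability conditions of Definition~\ref{defn_M_tilde}, combined with Hassett's properness of the relevant moduli of weighted pointed curves.

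For existence, given $\xi_K = (C_K, \mathbf{x}, \mathbf{y}, N_K, v_{1,K}, v_{2,K})$, I would proceed in two stages. First, I would extend the underlying pointed curve via Hassett's properness: condition~(3) says $(C_K, \mathbf{x}, \mathbf{y})$ is stable in $\overline{M}_{g,m+1|n-1}$ (treating $y_1$ as heavy) whenever $y_1$ is disjoint from all other markings over $K$, giving an extension $(C_R^{\mathrm{hv}},\mathbf{x},\mathbf{y})$; alternatively, the light-$y_1$ Hassett moduli $\overline{M}_{g,m|n}$ gives an extension $(C_R^{\mathrm{lt}},\mathbf{x},\mathbf{y})$. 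Second, on the chosen $C_R$, the tangent line $T_{y_1} = \omega_{C_R/R}^\vee|_{y_1}$ is a line bundle on $\mathrm{Spec}\,R$, and $(N_K, v_{1,K}, v_{2,K})$ defines a $K$-point of $\mathbb{P}_R(T_{y_1}\oplus \mathcal{O}_R)$; by properness of this $\mathbb{P}^1$-bundle, that $K$-point extends to an unique $R$-point, yielding $(N_R, v_{1,R}, v_{2,R})$ with $v_1,v_2$ not both zero.

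Next I would verify conditions~(1)--(5). Conditions~(1), (2), (3) are automatic from the Hassett extension and the projective bundle construction. Condition~(4), imposed when $v_{1,R}(0)=0$, holds automatically when using $C_R^{\mathrm{hv}}$, since $y_1$ is then disjoint from all other markings. The delicate case is condition~(5), which is imposed when $v_{2,R}(0)=0$: it can fail on $C_R^{\mathrm{hv}}$ due to rational tails containing $y_1$ together with some other light marking $y_j$. In that case I would replace $C_R^{\mathrm{hv}}$ by its contraction $C_R^{\mathrm{lt}}$ (obtained by applying Hassett's properness with $y_1$ light instead) and re-extend $(N, v_1, v_2)$ via the projective bundle on $C_R^{\mathrm{lt}}$. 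The differential of the contraction morphism $C_R^{\mathrm{hv}}\to C_R^{\mathrm{lt}}$ at $y_1$ is an isomorphism over $K$ but vanishes on the special fiber to a definite order $a\geq 1$ determined by the contracted tails, so the induced map of tangent line bundles is multiplication by $\pi^a$; this rescaling precisely converts a $v_{2,R}$-vanishing limit on $C_R^{\mathrm{hv}}$ into a $v_{2,R}$-nonvanishing limit on $C_R^{\mathrm{lt}}$, so that all of conditions~(1)--(5) hold on the new extension.

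For uniqueness, two $R$-extensions of $\xi_K$ must agree on the underlying curve by separatedness of Hassett's moduli, after which their $(N,v_1,v_2)$ data must agree by separatedness of the projective bundle $\mathbb{P}_R(T_{y_1}\oplus \mathcal{O}_R)$. The main obstacle will be the modification step: one must verify that the choice between the heavy-$y_1$ and light-$y_1$ extensions, dictated by the vanishing order of $v := v_{1,K}/v_{2,K}$ as a rational section of the relevant tangent line bundle, is canonical, and that the two constructions agree where both are simultaneously valid. This will require a careful comparison of the two tangent line bundles through the contraction morphism and a direct case-by-case check of conditions~(1)--(5), subdividing according to the order of vanishing of $v$ relative to the contraction order $a$.
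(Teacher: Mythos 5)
Your overall strategy (valuative criterion, Hassett properness for the underlying curve, properness of $\mathbb{P}_R(T_{y_1}\oplus\mathcal{O}_R)$ for $(N,v_1,v_2)$) is the same as the paper's, but the existence step has a genuine gap: the limit is in general \emph{neither} $C_R^{\mathrm{hv}}$ nor $C_R^{\mathrm{lt}}$, and your claim that the rescaling by $\pi^a$ turns a $v_2$-vanishing limit on $C_R^{\mathrm{hv}}$ into a $v_2$-nonvanishing one on $C_R^{\mathrm{lt}}$ fails whenever the pole order of $v=v_1/v_2$ on $C_R^{\mathrm{hv}}$ does not match the contraction order $a$. Concretely, take $g\ge 1$, $m=0$, $n=3$, a constant smooth family $C_0\times B$ with local coordinate $x$, markings $y_3\equiv 0$, $y_1=t$, $y_2=t+t^2$, and $v=t\,\partial_x$. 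The light model $C_R^{\mathrm{lt}}$ is the constant family; on it $v_1(b)=0$ while $y_1(b)=y_2(b)=y_3(b)$, so condition (4) fails. The heavy model $C_R^{\mathrm{hv}}$ is the double blow-up separating $y_1$ from both $y_2$ and $y_3$; there $a=2$ and $v=t^{-1}\partial_{s'}$ has a pole, so $v_2(b)=0$ and condition (5) fails, since the outermost exceptional curve carries one node and two light markings. The correct limit is the intermediate single blow-up, on which $y_1(b)=y_2(b)\ne y_3(b)$ and $v$ has order exactly $0$, so both $v_1,v_2$ are nonvanishing and (4),(5) are vacuous. This is precisely why the paper, after reducing to a smooth generic fibre, starts from the light extension and \emph{iteratively} blows up at $y_1(b)$ --- each blow-up lowering the vanishing order of $v_1$ by one --- stopping as soon as (4) holds, and then contracts the unmarked subchains; the terminal curve interpolates between your two models.

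Two smaller points. First, when $(g,m)=(0,1)$ the light model does not exist at all (a genus-$0$ curve with one heavy and $n$ light markings is never Hassett-stable), so your fallback is unavailable; the paper treats this case separately with explicit coordinates on $\mathbb{P}^1\times B$. Second, your uniqueness argument cannot simply quote separatedness of Hassett's spaces, because two valid extensions need not have underlying curves stable for a common weight assignment (one may be an intermediate blow-up of the other); the paper instead dominates both extensions by a third family and checks that the same rational subcurves are contracted. Your closing remark about subdividing according to the vanishing order of $v$ relative to $a$ points in the right direction, but that case analysis --- over all intermediate models, not just the two extremes --- is the actual content of the proof and is missing.
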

\begin{proof}
  We prove the properness by the valuative criterion.
  Let $R$ be a Henselian DVR with residue field $\mathbb C$.
  Let $B=\mathrm {Spec\!~}R$, $b\in B$ be the
  closed point and $B^* = B\backslash \{b\}$ be the generic point.
  Suppose $\xi^* = (C^*,\pi^*,\mathbf x^*;\mathbf y^*,N^*,v^*_1,v^*_2)$ is in
  $\widetilde M_{g,m|n}(B^*)$.  We want to show that possibly after a finite
  base-change, we can extend $\xi^*$ to a family
  $\xi = (C,\pi,\mathbf x;\mathbf y,N,v_1,v_2)$ over $B$, and the extension is
  unique up to unique isomorphisms.

  We introduce some notation. When $(C,\pi,\mathbf x;\mathbf y,N,v_1,v_2)$ is
  a family over $B$, we denote by $C_b$ the special fibre $\pi^{-1}(b)$, and by $x_i(b)$ the
  $i$-th heavy marking of $C_b$, and similar notation for the light markings.
  By a rational tail (resp. bridge) $E\subset C_b$ we mean that $E$ is a smooth
  rational subcurve of $C_b$ intersecting the rest of $C_b$ at one (resp. two) node(s) of $C_b$.


  First we reduce to the case where $\pi^*:C^*\to B^*$ is smooth.
  By the standard stable reduction argument, possibly after finite
  base-change, the normalization of $C^*$ is a disjoint union $\coprod_{i=0}^{k}C^*_i$ of smooth curves over
  $B^*$.
  We view the preimages of $\mathbf x^*$ and $\mathbf y^*$ as heavy and light
  markings on $\coprod_{i=0}^{k}C^*_i$. We also view the preimages of the nodes
  of $C^*$ as heavy markings. Assume that the preimage of $y_1^*$ is in $C^*_0$.
  For $i>0$, $C^*_i$ together with the markings forms a
  family $\xi_i^*$ of Hassett's stable curves in some $\overline M_{g_i,m_i|n_i}(B^*)$.
  For $i=0$, the map $C_0^*\to C^*$ induces an isomorphism of
  relative tangent sheaves near $y_1^*$. Thus the pointed curve $C_0^*$ together
  with $(N^*,v_1^*,v_2^*)$ is an object $\xi_0^*$ in $\tilde
  M_{g_0,m_0|n_0}(B^*)$, for some appropriate $g_0,m_0,n_0$.
  For $i>0$, since Hassett's moduli spaces are proper, possibly after finite
  base-change, $\xi^*_i$ extends uniquely to a $B$-family $\xi_i\in \overline M_{g_i,m_i|n_i}(B) $.
  Hence if possibly after finite  base-change $\xi_0^*$ also extends uniquely to a
  $B$-family $\xi_0\in \tilde M_{g_0,m_0|n_0}(B^*)$, by gluing the $\xi_i$ along
  each pair of heavy markings coming from the nodes of $C^*$, we get a
  unique extension of $\xi^*$ to $\xi\in \tilde M_{g,m|n}(B)$. Hence without loss of generality, we assume that
  $\pi^*$ is smooth.

  If $v^*_1= 0$ or $v_2^*= 0$, then $\xi^*$ is equivalent to a family of Hassett's
  stable curves with weighted markings.
  The theorem follows from the properness of  Hassett's moduli spaces.
  Hence  we assume $v^*_1\neq 0$ and $v^*_2\neq 0$ on $B^*$.

  We first consider the case $(g,m)\neq (0,1)$. In this case
  $(C^*,\pi^*,\mathbf x^*;\mathbf y^*)$
  is family of Hassett-stable curves with heavy markings $\mathbf x^*$ and light
  markings $\mathbf y^*$. Possibly after base-change we  extend it to a
  $B$-family of Hassett-stable curves $(C,\pi,\mathbf x;\mathbf y)$. We claim
  that $(N^*,v^*_1,v^*_2)$ has a unique extension $(N,v_1,v_2)$ to $B$ such that
  $(v_1,v_2)$ have no common zero. Indeed, by fixing a trivialization of
  $T_{y_1}$, we can identify $T_{y_1}$ with $\mathcal  O_B$. Then $N^*$ together with the
  sections $(v_1^*,v_2^*)$ is equivalent to a map from $B^*$ to $\mathbb P^1$.
  This map has a unique extension to $B$. The
  extension of the map is equivalent to the extension of $(N^*,v^*_1,v^*_2)$ to
  $(N,v_1,v_2)$.

  We now modify the family $(C,\pi,\mathbf x;\mathbf y,N,v_1,v_2)$ by
  iteratively blowing up $C$ at some smooth points
  of the special fibre to make it stable. The only situation that
  violates the stability condition is when
  \begin{equation}
    \label{unstable_1}
    v_1(b)=0  \quad \text{and} \quad  y_1(b) = y_j(b)\text{ for some }j\neq 1.
  \end{equation}

  If this happens, let $q:C^\prime\to C$ be the blowup of $C$ at $y_1(b)$, and
  extend $(\mathbf x^*;\mathbf y^*,N^*,v^*_1,v^*_2)$ to a new $B$-family
  $\xi^\prime = (C^\prime,\mathbf x^\prime;\mathbf
  y^\prime,N^\prime,v_1^\prime,v^\prime_2)$.
  We claim that $v_2^\prime(b)\neq 0$ and the vanishing order of $v_1^\prime$ at
  $b$ is
  exactly one less than that of $v_1$. To see this, notice that the map $q$
  induces $q^*\Omega_{C/B} \cong \Omega_{C^\prime/B}(-E)$, where $E$ is the exceptional
  divisor. Hence we have an isomorphism $(\Omega_{C/B})^\vee|_{y_1}(-b)\cong
  \Omega_{C^\prime/B}^\vee|_{y_1^\prime}$ of line bundles on $B$, which
  restricts to the identity on $B^*$. Thus the claim follows immediately.
  We replace $\xi$ by $\xi^\prime$, and
  repeat this procedure finitely many times until (\ref{unstable_1}) does not happen. This
  gives us a chain of exceptional divisors $E_1,\cdots,E_k$. Then we blow down
  the maximal subchains of $E_1,\cdots,E_k$ that does not contain any
  $y_j(b)$ for $j=1,\dots,n$.  This gives a stable family over $B$ and proves
  the existence in the case $(g,m)\neq (0,1)$.

  Now we consider the case $(g,m) = (0,1)$. It follows that $n\geq 2$.
  In this case we can find an $B^*$-isomorphism between $C^*$ and
  $\mathbb P^1\times B^*$, identifying  $y^*_1$
  with $\{0\}\times B^*$, $x^*_1$ with $\{\infty\}\times B^*$ and
  $v_1/v_2$ with the standard tangent vector $\partial/\partial z$, where $z$ is
  the coordinate on $\mathbb P^1$. We first take $C=\mathbb P^1\times
  B$ and $N=\mathcal  O_B$. We set $v_2\equiv 1$ and
  $v_1\equiv \partial/\partial z$.
  Then we take the limit of the markings $y_2^*,\cdots,y_n^*$ to get a family
  $(C,\pi,x_1;\mathbf y,N,v_1,v_2)$ over $B$.
  The only situation that violates the stability condition is when
  \begin{equation}
    \label{unstable_2}
    x_1(b) = y_j(b) \text{ for some } j\neq 1.
  \end{equation}
  If this happens, we blow up $C$ at $x_1(b)$ to get a new family.
  In the new family $x_1(b)$ lies on the exceptional divisor. If $x_1(b)$ still
  intersects some $y_j(b)$, we repeat this procedure until (\ref{unstable_2}) does not happen.
  This only takes finitely many steps, since in the generic
  fibre $x_1$ does not intersect any light markings.
  After these blowups, the special fibre is a chain of smooth rational curves
  $E_1,\cdots,E_k$, where $y_1(b)\in E_1,x_1(b)\in E_k$ and $E_i$ intersects
  $E_{i+1}$ at a node for $i=1,\cdots,k-1$.
  Then we contract all the $E_i$ that does not contain any $y_j(b)$ for $j=2,\dots,n$.
  This gives a stable family over $B$ and proves the existence in
  the case $(g,m)=(0,1)$.

  The following picture shows the change of the special fibre in a typical
  $(g,m)=(0,1)$ case. The subcurves drawn vertically are contracted.
  \begin{figure}[h]
    \centering
    \includegraphics[scale=0.4]{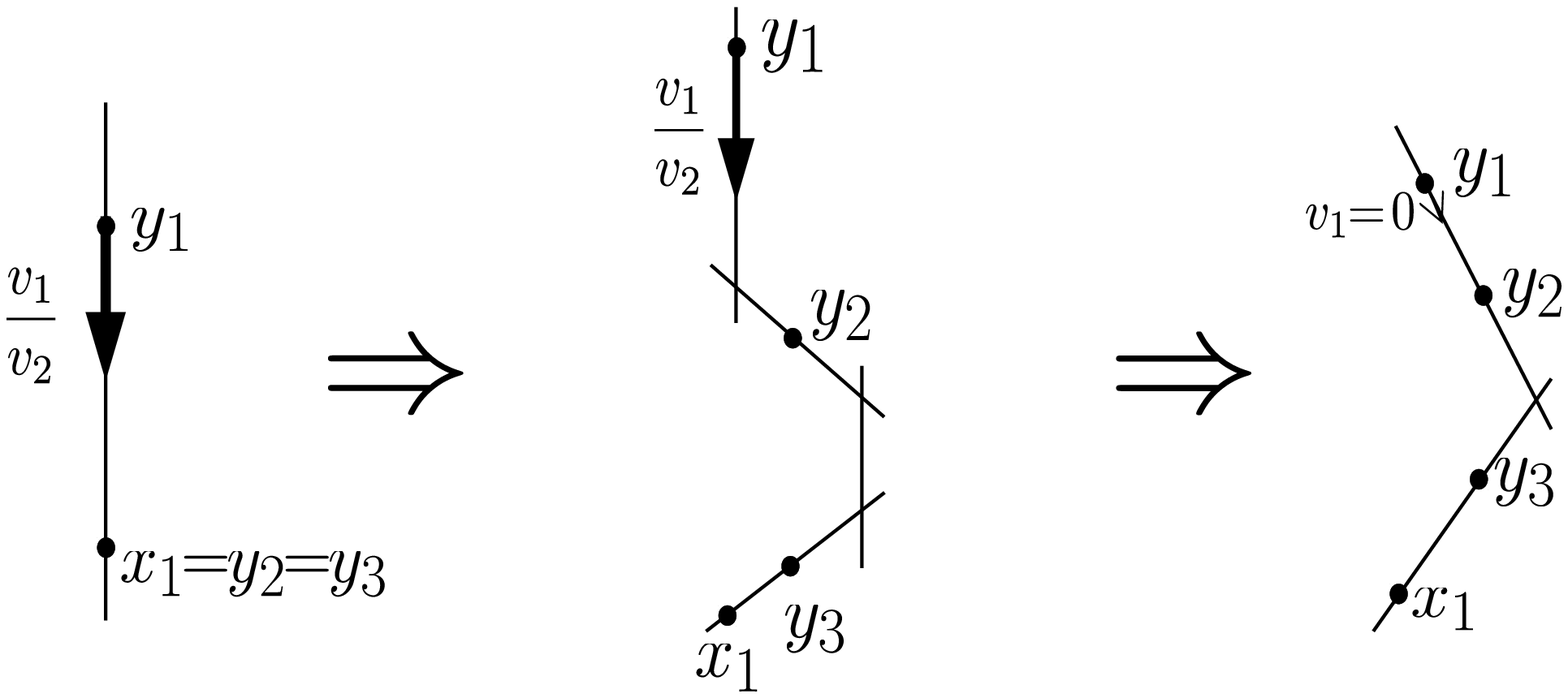}.
  \end{figure}

  For uniqueness,
  let $\xi = (C,\pi,\mathbf x;\mathbf y,N,v_1,v_2)$
  and $\xi^\prime = (C^\prime,\pi^\prime,\mathbf x^\prime;\mathbf y^\prime,N^\prime,v^\prime_1,v^\prime_2)$
   in $\widetilde M_{g,m|n}(B)$ be two extensions of $\xi^*$, we need to show that these two extensions are
  isomorphic, possibly after finite  base-change.
  The proof is standard. We construct a third (unstable) $B$-family
  $\tilde\xi = (\widetilde C,\widetilde \pi,\widetilde{\mathbf x};\widetilde{\mathbf y},\widetilde N,\widetilde
  v_1,\widetilde v_2)$ dominating $\xi$ and $\xi^\prime$, and inducing
  isomorphisms over $B^*$.
  Here we are only assuming that $\tilde C$ is a family of nodal curves, all markings are
  in the relative smooth locus and $\widetilde v_1,\widetilde v_2$ have no common zeros.
  We may also assume that $\widetilde C$ is a regular surface.
  One can check that due to the stability condition the maps $q:\widetilde C\to C$ and $q^\prime:\widetilde C\to C^\prime$ contract
  the same set of rational subcurves in the special fibre, thus it induces an
  isomorphism between $\xi$ and $\xi^\prime$. We will skip the details here.
\end{proof}
\begin{remark}
We can actually prove that $\widetilde{M}_{g,m|n}$  has a projective coarse
moduli space. Since we will not need this result, we only sketch the proof here.

We apply induction on $n$. When $n=1$, $y_1$ is  is
not allowed to collide with any other markings since all other markings are
heavy. Thus $y_1$ is equivalent to a heavy marking. Hence $\tilde M_{g,m|1}$ is
isomorphic to the projective bundle $\mathbb P_{\overline M} (T_{y_1}\oplus
\mathcal O_{\overline M})$, where $\overline
M=\overline{M}_{g,m+1}$. Since $\overline{M}_{g,m+1}$ has a projective coarse
moduli, so does $\tilde M_{g,m|1}$.

Let $\widetilde{\mathcal  C}_{g,m|n}$ be the universal
curve over $\widetilde{M}_{g,m|n}$. If $\widetilde{M}_{g,m|n}$  has a projective
coarse moduli, so does $\widetilde{\mathcal  C}_{g,m|n}$.
For each $n\geq 1$, we look at the morphism $\tau_n:\widetilde{M}_{g,m|n+1}\to
\widetilde{\mathcal  C}_{g,m|n}$ defined by viewing the last light marking
$y_{n+1}$ as the distinguished point on the curve and then stabilizing the
curve. It suffices to prove that $\tau_n$ is projective. The fibres of $\tau_n$ are
either points or isomorphic to $\mathbb P^1$. The locus $Z\subset \widetilde{M}_{g,m|n+1}$
where $y_1$ coincides with $y_{n+1}$ is a divisor, which intersects every one
dimensional fibre at exactly one point. Hence $\mathcal
O_{\widetilde{M}_{g,m|n+1}}(Z)$ is ample when restricted to the fibres. Note
that we can generalize \cite[Tag 0D2S]{stacks-project} to the case of algebraic spaces since
we have the formal function theorem for algebraic spaces
\cite{knutson1971algebraic}. This implies that $\mathcal
O_{\widetilde{M}_{g,m|n+1}}(Z)$ is relatively ample, thus $\tau_n$ is projective.

\end{remark}

\section{The moduli of stable $r$-spin  curves with mixed weighted markings}
\label{r_spin_master}
In this section we define the moduli space $\tilde M^{1/r}_{g,\gamma}$ of genus-$g$ stable
$r$-spin curves with mixed weighted markings.
Then we introduce a $\mathbb C^*$-action on
$\tilde M^{1/r}_{g,\gamma}$ and study the
fixed-point components.
\subsection{The moduli space $\tilde M^{1/r}_{g,\gamma}$ and its properness}
In this subsection we assume that $2g-1+m\geq 0$, $n\geq 1$ and
$(2g-1+m,n-1)\neq (0,0)$.
As before let
\[\gamma =
  \big(\frac{a_1}{r},\dots,\frac{a_m}{r}\big|\frac{b_1}{r},\dots,\frac{b_n}{r}\big),
  \quad
a_i,b_j \in \{1,\cdots,r\}.
\]
\begin{definition}
  \label{defn_spin_master}
  An $S$-family of stable $r$-spin curves with {\it mixed} $\gamma$-weighted
  markings is the datum
  \[
    \xi=(C,\pi,x_1,\cdots,x_m;y_1,\cdots,y_n,N,L,v_1,v_2,p),
  \]
  where
  $(C,\pi,\mathbf x;\mathbf y,L,p)$ is an $S$-family of genus-$g$ pre-stable
  curves  with $\gamma$-weighted markings (Definition \ref{weighted_r_spin}),
  such that if $\rho:C\to |C|$ is the coarse moduli of $C$ and $|\pi|:|C|\to S$ the
  induced projection, then
  \[
    \big(
    |C|,|\pi|,\rho(\mathbf x);\rho(\mathbf y),N,(d\rho_{y_1}\otimes
    \mathrm{id}_N)(v_1),v_2\big)    \in \widetilde
    M_{g,m|n}(S).
  \]
\end{definition}
It is obvious how to pullback $\xi$ along any $S^\prime \to S$, as in
Section \ref{subsec_curves}. This defines the category $\widetilde
M_{g,\gamma}^{1/r}$ of stable genus-$g$ $r$-spin curves with mixed $\gamma$-weighted
markings.  We assume that the degree constraint (\ref{eq:selection}) is
satisfied so the category is not empty.
\begin{theorem}
  \label{proper_spin_master}
  The category $\widetilde M_{g,\gamma}^{1/r}$ is a smooth proper
  Deligne--Mumford stack.
\end{theorem}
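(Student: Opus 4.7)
The plan is to reduce Theorem \ref{proper_spin_master} to the properness of $\widetilde M_{g,m|n}$ established in the previous section, combined with the standard properties of the moduli of $r$-spin structures. I would first construct a forgetful morphism
\[
F: \widetilde M_{g,\gamma}^{1/r} \longrightarrow \widetilde M_{g,m|n}
\]
sending $\xi=(C,\pi,\mathbf x;\mathbf y,N,L,v_1,v_2,p)$ to the coarse-moduli tuple $(|C|,|\pi|,\rho(\mathbf x);\rho(\mathbf y),N,(d\rho_{y_1}\otimes \mathrm{id}_N)(v_1),v_2)$ required to lie in $\widetilde M_{g,m|n}(S)$ by Definition \ref{defn_spin_master}. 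By Theorem 4.2.1 of \cite{abramovich2008gromov}, the twisted-curve structure on $C$ above the heavy markings is uniquely recovered from the prescribed monodromies $a_i$; hence $F$ is equivalent to the datum of a representable line bundle $L$ with the prescribed monodromies together with a non-vanishing $p\in H^0(C,P)$ over the coarse family.

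For the DM property and smoothness, I would examine the fibre of $F$ over a fixed point of $\widetilde M_{g,m|n}$: it is canonically identified with the fibre of the forgetful map $\overline M^{1/r}_{g,\gamma}\to \overline M_{g,m|n}$ (for the underlying Hassett-weighted curve with $y_1$ regarded as a light marking), after the unique stacky enrichment at heavy markings. That fibre is a finite étale gerbe of $r$-th roots of $\omega_{C}(\cdots)$, cut by the open condition that $p$ be nowhere vanishing; this is smooth and DM. Combining with Theorem \ref{representability}, the source is a smooth DM stack of finite type, of dimension one larger than $\overline M^{1/r}_{g,\gamma}$.

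The main obstacle will be properness. I plan to prove it via the valuative criterion, following the pattern of the proof of properness for $\widetilde M_{g,m|n}$. Start with $\xi^*\in \widetilde M^{1/r}_{g,\gamma}(B^*)$ over the punctured spectrum of a Henselian DVR. By the properness of $\widetilde M_{g,m|n}$, after a finite base-change the coarse data extends uniquely to a $B$-family in $\widetilde M_{g,m|n}(B)$; the unique stacky enrichment at heavy markings yields an extension of the twisted curve. By the properness of the moduli of pre-stable $r$-spin structures \cite{chiodo2008stable}, the pair $(L^*,p^*)$ extends to some $(L,p)$ on the special fibre. The delicate step is that the limit section $p$ may vanish along subcurves of the central fibre; to fix this I would modify $L$ by twisting with divisors supported on the vanishing locus (and correspondingly adjusting the orbifold structure at the relevant nodes), following the standard procedure in FJRW theory. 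The degree-selection rule (\ref{eq:selection}) guarantees the needed modifications are available and restore representability together with non-vanishing of $p$. Uniqueness of the extension then follows, as in the coarse case, by constructing a common dominating family and contracting unstable components using the stability condition of Definition \ref{defn_spin_master}.
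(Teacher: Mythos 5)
Your route is genuinely different from the paper's. The paper does not run the valuative criterion at all: it forms the root stack $\mathcal C_{\mathrm{rt}}$ of $r$-th roots of $\omega_{\mathcal  C/\widetilde M_{g,m|n}}\big(\sum x_i+\sum_{b_j\neq r}(1-b_j)y_j+\sum_{b_j=r}y_j\big)$ over the universal curve of the already-proper master space $\widetilde M_{g,m|n}$, and identifies $\widetilde M^{1/r}_{g,\gamma}$ with a closed substack of the stack $\mathcal K^{\mathrm{bal}}_{g,m}(\mathcal  C_{\mathrm{rt}}/\widetilde M_{g,m|n},\beta)$ of balanced twisted stable maps of fibre class, so that properness is inherited wholesale from \cite{abramovich2002compactifying} and smoothness from the deformation-theoretic argument of \cite{10.2307/1194469}. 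Your plan --- extend the coarse datum first via the properness of $\widetilde M_{g,m|n}$ from the previous section, then extend the spin structure fibrewise --- is the unpackaged version of the same reduction and is viable; what the paper's packaging buys is that existence and uniqueness of the limit of $(L,p)$, including the bookkeeping of the orbifold structure acquired at nodes, come for free from a quoted theorem instead of a hands-on stable-reduction argument.

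The one step you should repair is your ``delicate step.'' In Definition \ref{weighted_r_spin}, a non-vanishing section $p$ of $L^{-r}\otimes\omega_{C/S}(\cdots)$ is literally the same datum as an isomorphism $L^{r}\cong\omega_{C/S}(\cdots)$; that is, the pair $(L,p)$ \emph{is} an $r$-th root, and there is no further open ``non-vanishing'' condition to cut out of the gerbe of roots. Consequently the properness of the stack of roots of a line bundle on a family of twisted curves (\cite{chiodo2008stable}, or equivalently the twisted-stable-maps formulation above) already produces a limit with $p$ nowhere vanishing, at the sole cost of new orbifold structure at the nodes of the special fibre. So either you invoke that properness, in which case no twisting of $L$ along a vanishing locus of $p$ ever arises, or you carry out the twisting and re-balancing by hand, in which case you are re-proving the result you cited rather than using it; as written, the proposal does both. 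Moreover, the selection rule (\ref{eq:selection}) is a global non-emptiness condition for the moduli problem and plays no role in taking limits. A final small imprecision: over boundary points of $\widetilde M_{g,m|n}$ where the twisted curve acquires stacky nodes, the forgetful map $F$ is not \'etale (the node-smoothing parameter upstairs is an $r^\prime$-th root of the coarse one), so smoothness of $\widetilde M^{1/r}_{g,\gamma}$ does not descend along $F$; it holds, but by the argument of Proposition 2.1.1 of \cite{10.2307/1194469}, which is what the paper cites.
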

\begin{proof}
  The proof of properness is essentially the same as the proof of Theorem 1.5.1 in
  \cite{10.2307/1194469}.
  Let $\pi:\mathcal  C\to \widetilde
  M_{g,m|n}$ be the universal curve with universal markings
  $\mathbf x;\mathbf y$.  Let $\mathcal C_{\mathrm{rt}}$ be the stack of $r$-th roots of
  the line bundle
  \[
    \omega_{\mathcal  C/\widetilde M_{g,m|n}}\big( \sum_{i=1}^m x_i +
    \sum_{b_j\neq r} (1-b_j)y_j + \sum_{b_j=r} y_j\big).
  \]
  This is a proper Deligne--Mumford stack. Let $\beta$ be the class of a fibre
  of $\mathcal C_{\mathrm{rt}}\to \widetilde M_{g,m|n}$. We consider  $\mathcal
  K^{\mathrm{bal}}_{g,m}(\mathcal  C_{\mathrm{rt}}/\widetilde M_{g,m|n},\beta)$, the stack of balanced twisted stable maps of genus $g$
  and class $\beta$ into $\mathcal  C_{\mathrm{rt}}$ relative to $\widetilde
  M_{g,m|n}$. Then $\widetilde M_{g,\gamma}^{1/r}$ is isomorphic to the closed substack
  of $\mathcal K^{\mathrm{bal}}_{g,m}(\mathcal  C_{\mathrm{rt}}/\widetilde
  M_{g,m|n},\beta)$ where the $i$-th marking is mapped to $x_i$, for all $i$.
  This is a proper Deligne--Mumford stack \cite{abramovich2002compactifying}.

  The proof of smoothness is  identical to the proof of Proposition 2.1.1
  in  \cite{10.2307/1194469}.
\end{proof}

\subsection{The $\mathbb C^*$-action on $\widetilde M_{g,\gamma}^{1/r}$ and the fixed-point components}
\label{C_star_spin}
From now on we assume that $n\geq 1$ and $2g-2+m\geq 0$.
We introduce an $\mathbb C^*$-action on
$\widetilde M^{1/r}_{g,\gamma}$ via
\begin{equation}
  \label{action}
  \lambda \cdot (C,\pi,\mathbf x;\mathbf y,N,L, v_1,v_2,p) = (C,\pi,\mathbf x;\mathbf y,N,L,\lambda v_1,v_2,p),\quad
  \lambda\in \mathbb C^*.
\end{equation}
This also defines an action on the universal family $(\mathcal  C,\pi,\mathbf x;\mathbf y,\mathcal  N,v_1,v_2,\mathcal  L,p)$:
it defines a $\mathbb C^*$-action on $\mathcal  C$; $\mathcal  L$  is an
equivariant line bundle on $\mathcal  C$; $\mathcal  N$ is an
equivariant line bundle on $\widetilde M^{1/r}_{g,m|n}$; all the markings, $p$ and $(v_1,v_2)$
are preserved by these actions.

The fixed-point components of $\tilde M^{1/r}_{g,\gamma}$ are
\begin{enumerate}
\item
  $F^{1/r}_0 = \{\xi:v_1 = 0\}$;
\item
  $F^{1/r}_\infty = \{\xi:v_2 = 0\}$;
\item
  For each $J \subset \{1,\cdots,n\}$ such that $\{1\}\subsetneqq J$,
  $F^{1/r}_J$ consists of
  \[
    \xi=(C,\pi,\mathbf x;\mathbf y,N,L,v_1,v_2,p)
  \]
  such that
  \begin{itemize}
  \item $C=C_J\cup E$ where $E$ is a smooth rational orbifold intersecting
    $C_J$ at a single node;
  \item $v_1\neq 0$ and $v_2\neq 0$;
  \item $y_1\in E$ and $y_j=y_1$ for all $j\in J$.
  \item all other markings are on $C_J$.
  \end{itemize}
\end{enumerate}
It is easy to see that these are all the set-theoretic fixed-point components.
Indeed, suppose $\xi \in \tilde
M_{g,m|n}(\mathbb C)$ is $\mathbb C^*$-fixed and $v_1\neq 0,v_2\neq 0$. Let  $E$ be the
subcurve  containing $y_1$. For each $1\neq \lambda\in \mathbb C^*$, the isomorphism
between $\xi$ and $\lambda\cdot\xi$ induces a nontrivial automorphism of $E$
fixing all the markings and nodes of $C$ contained $E$. It follows that $E$
is a smooth rational orbifold intersecting the remainder of $C$ at a single node, and all the
markings on $E$ coincide with $y_1$. Hence $\xi$ is in some
$F_J^{1/r}$. A general $\xi$ in $F^{1/r}_J$ looks like (the $r$-spin structure
is not drawn):

\begin{figure}[h]
\centering
  \includegraphics[scale=.12]{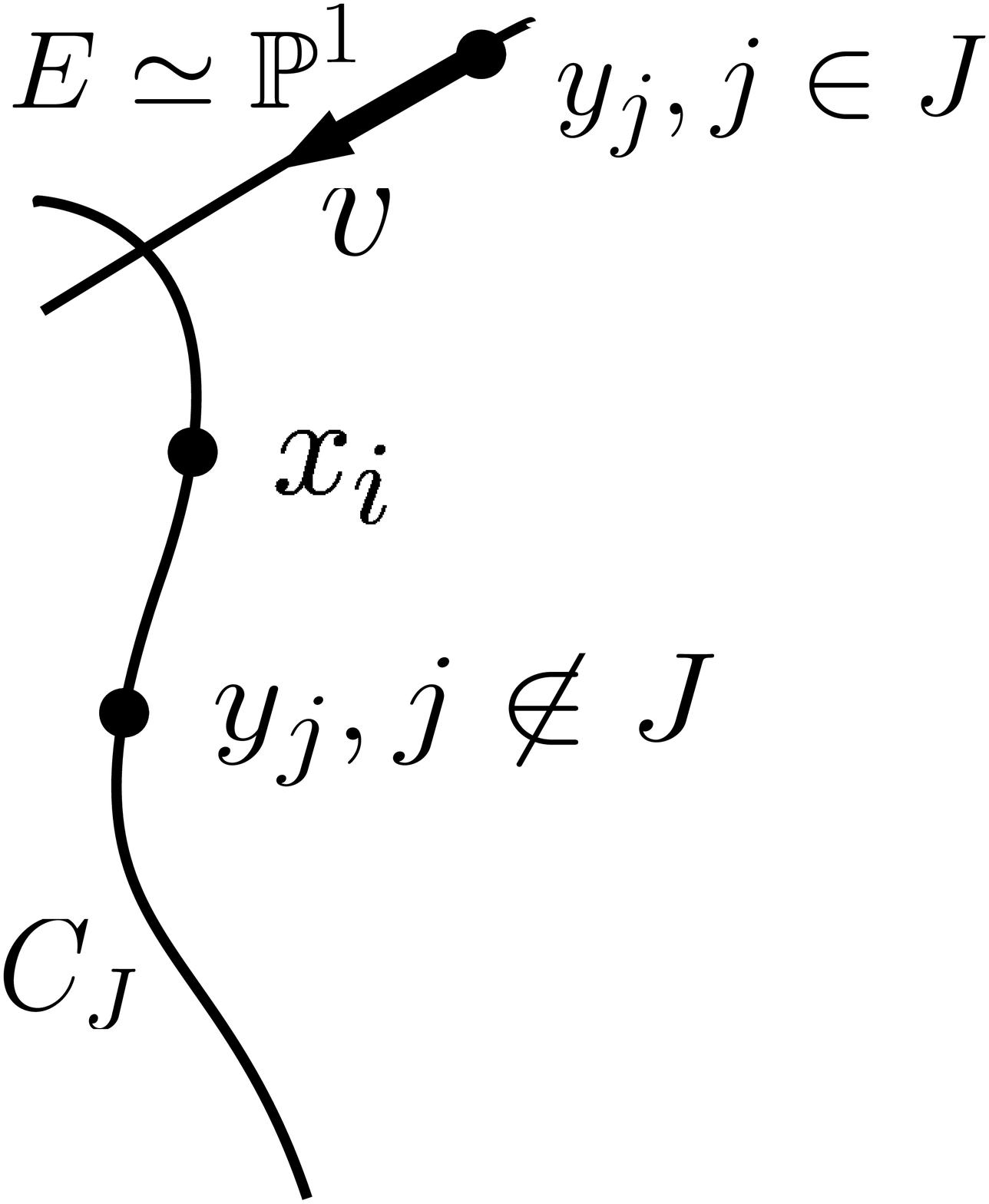}.
\end{figure}

We introduce some notation before describing the stack structure and the normal
bundle of each fixed-point component.
For any integer $w$, let $\mathbb C_w$ be the standard $\mathbb C$ with the
$\mathbb C^*$-action of weight $w$:
\[
  \lambda \cdot z = \lambda^{w}z, \quad \lambda \in \mathbb C^*,z\in \mathbb C.
\]
Let
\[
  \gamma^\prime =
  \big(\frac{a_1}{r},\cdots,\frac{a_m}{r},\frac{b_1}{r}\big|\frac{b_2}{r},\cdots,\frac{b_n}{r}\big).
\]
Recall that $T_{y_1}$ is the line bundle formed by the tangent spaces to the
curves at $y_1$.
\begin{lemma}
  \label{normal_r_spin_0_infty}
  $\phantom{}$
   \begin{enumerate}
   \item $F^{1/r}_0\cong \overline
  M^{1/r}_{g,\gamma^\prime}$, where the heavy markings are $x_1,\cdots,x_m,y_1$ and the light
  markings are $y_2,\cdots,y_n$.
Its equivariant normal bundle is isomorphic to $T_{y_1}\otimes \mathbb C_1$.
\item $F^{1/r}_\infty\cong \overline M^{1/r}_{g,\gamma}$, where the heavy
  markings are all the $x_i$ and
  the light markings are all the $y_j$. Its equivariant normal bundle is isomorphic to $(T_{y_1}\otimes \mathbb C_1)^\vee$.
   \end{enumerate}
\end{lemma}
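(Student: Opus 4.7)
My plan is to identify each fixed-point component with the claimed moduli stack and then to read off the equivariant normal bundle from the deformations of the $(v_1,v_2)$ data.

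On $F^{1/r}_0 = \{v_1 = 0\}$, I will use the no-common-zero axiom of Definition \ref{defn_M_tilde} to force $v_2$ to be nowhere vanishing; by Remark \ref{rmk_effect_v_2} the section $v_2$ then canonically trivializes $N$, so the triple $(N,v_1,v_2)$ carries no further information. Stability condition (4) of Definition \ref{defn_M_tilde} simultaneously guarantees that $y_1$ is disjoint from $y_2,\ldots,y_n$, so $y_1$ can be regarded as a heavy marking of monodromy $b_1/r$ in the non-orbifold sense of Section \ref{other_description}. This yields a functor $F^{1/r}_0 \to \overline M^{1/r}_{g,\gamma'}$, whose quasi-inverse sends an object to itself together with the trivial data $(N,v_1,v_2) = (\mathcal O_S, 0, 1)$. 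For the equivariant normal bundle, the first-order deformations transverse to $F^{1/r}_0$ are exactly the deformations of $v_1$; since $v_1 \in H^0(S, T_{y_1} \otimes N)$ and $v_2$ has trivialized $N$, these deformations form a copy of $T_{y_1}$, and since $\mathbb C^*$ acts on $v_1$ with weight $1$ by \eqref{action}, the equivariant normal bundle is $T_{y_1} \otimes \mathbb C_1$.

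For $F^{1/r}_\infty = \{v_2 = 0\}$ the argument will be parallel with the roles of $v_1$ and $v_2$ swapped: $v_1$ is nowhere vanishing and induces an isomorphism $N \cong T_{y_1}^{-1}$ sending $v_1$ to $1$, after which the data reduces to the underlying $r$-spin curve with mixed $\gamma$-weighted markings, producing the identification with $\overline M^{1/r}_{g,\gamma}$, and the normal deformations come from $v_2 \in N = T_{y_1}^{-1}$. The one step requiring care, and the main obstacle in the proof, is the equivariant weight: the action $\lambda \cdot (v_1, v_2) = (\lambda v_1, v_2)$ is equivalent, after rescaling the $N$-isomorphism by $\lambda^{-1}$ to preserve the chosen trivialization supplied by $v_1$, to the datum $(v_1, \lambda^{-1} v_2)$, so the induced action on the normal coordinate $v_2$ has weight $-1$. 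Packaging these together will produce the equivariant normal bundle $T_{y_1}^{-1} \otimes \mathbb C_{-1} = (T_{y_1} \otimes \mathbb C_1)^\vee$.
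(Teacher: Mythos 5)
Your proposal is correct and follows essentially the same route as the paper: trivialize $(N,v_1,v_2)$ by the nonvanishing section, invoke Section \ref{other_description} to convert $y_1$ between a non-orbifold and an orbifold heavy marking, and read off the normal bundle as the deformations of the vanishing section with the induced $\mathbb C^*$-weight. Your explicit weight $-1$ computation for $F^{1/r}_\infty$ (via the normal coordinate $v_2/v_1\in T_{y_1}^\vee$) is exactly what the paper leaves implicit in ``(2) is similar.''
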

\begin{proof}

  We prove (1); (2) is similar. Since $v_1$ vanishes identically on $F^{1/r}_0$,
  $y_1$ does not intersect any other
  markings there. To prove $F^{1/r}_0\cong \overline
  M^{1/r}_{g,\gamma^\prime}$, we only need to identify the non-orbifold marking $y_1$ of $F^{1/r}_0$ with the
  last orbifold heavy marking of $\overline M^{1/r}_{g,\gamma^\prime}$. This is worked out
  in Section \ref{other_description}. To compute the normal bundle, note that
  near $F^{1/r}_0$, $v_2$ is never vanishing. Hence $N$ is trivialized by $v_2$
  and thus the datum $(N,v_1,v_2)$ is equivalent to $v_1/v_2$ (cf. Remark \ref{rmk_effect_v_2}).
  Hence near $F^{1/r}_0$, $\tilde M_{g,\gamma}^{1/r}$ is the total space of
  $T_{y_1}$ over $F^{1/r}_0$. The $\mathbb C^*$-action is identified with
  fibrewise multiplication. Hence the formula for the normal bundle follows.
\end{proof}

We now come to $F^{1/r}_J$. Let $\xi=(C,\pi,\mathbf x;\mathbf y,N,L,v_1,v_2,p)$
be any closed point of $F^{1/r}_J$. Recall that $C=C_J\cup E$, where $E$ is a smooth
rational orbifold intersecting $C_J$ at a node. The first observation is
that the orbifold index of the node $C_J\cap E$ and the monodromy of the line bundle $L$ at the node
 are uniquely determined by the data $\gamma$ and $J$.
Let $k,a_\infty\in \{1,\cdots,r\}$ and $\ell \in \mathbb Z_{\geq 0}$  be the integers such that
\begin{equation}
  \label{k_and_l}
  r\ell +k =1 + \sum_{j\in J} (b_j-1)    \quad \text{and} \quad
  a_\infty \equiv -k \mod r,
\end{equation}

Let $r^\prime = r/\gcd(r,k)$.
\begin{lemma}
  The orbifold index at the node $C_J\cap E$ is $r^\prime$. The monodromy of
  $L|_{C_J}$ at the node is $\frac{k}{r}$. The monodromy of
  $L|_{E}$ at the node is $\frac{a_\infty}{r}$.
\end{lemma}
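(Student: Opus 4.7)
The plan is to pin down the orbifold index $r''$ at the node and the two branch monodromies of $L$ from the single constraint that $p|_E$ is nowhere vanishing on the rational tail $E$, together with representability of $L$ and the balanced-node convention.

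First I would set up the geometry on $E$: the only markings on $E$ are the light markings $y_j$ for $j \in J$, all coinciding at the non-orbifold smooth point $y_1$; the only orbifold point of $E$ is the node $E \cap C_J$, of some index $r''$ to be determined; and the heavy markings all lie on $C_J$. Thus
\[
P|_E \;=\; L^{-r}|_E \otimes \omega_C|_E \otimes \mathcal O_E(S_J\cdot y_1),\qquad S_J = \sum_{j\in J,\,b_j\neq r}(1-b_j) + |J_r|,
\]
where $J_r = \{j\in J:b_j=r\}$. Since $p|_E$ is nowhere vanishing, $P|_E$ must be trivial as an orbifold line bundle, and in particular its $\mathbb Q$-degree vanishes.

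Let $a \in \{1,\dots,r\}$ denote the monodromy of $L|_E$ at the node. The general principle that an orbifold line bundle with monodromy $a/r$ at the unique orbifold point has $\mathbb Q$-degree in $\mathbb Z + a/r$ gives $\deg L|_E = \deg L' + a/r$ for some integer $\deg L'$. Combining the nodal-curve adjunction $\omega_C|_E \cong \omega_E(\text{node})$ with the orbifold formula $\omega_E \cong \rho^*\omega_{|E|}((r''-1)\cdot\text{node})$ yields $\deg_{\mathbb Q}\omega_C|_E = -1$, independent of $r''$. Thus the equation $\deg_{\mathbb Q} P|_E = 0$ reads $-r\deg L' - a - 1 + S_J = 0$. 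An elementary rearrangement using the defining relation $r\ell + k = 1 + \sum_{j\in J}(b_j-1)$ (with the $b_j = r$ contributions absorbed as multiples of $r$) gives $S_J = 1 - k + r(|J_r| - \ell)$; reducing the degree equation modulo $r$ forces $a + k \equiv 0 \pmod r$, and with $a \in \{1,\dots,r\}$ this pins down $a = a_\infty$.

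To obtain the index and the other monodromy, I would invoke representability and the balanced-node convention. Faithfulness of the $\mu_{r''}$-action on $L|_{\text{node}}$, which acts by $\exp(2\pi i a_\infty/r)$, forces $r'' = r/\gcd(a_\infty,r) = r/\gcd(k,r) = r'$. For the $C_J$-side monodromy, in a local balanced chart $[\mathrm{Spec}\,A[z,w]/(zw-t)\,/\,\mu_{r'}]$ the generator $\zeta$ acts on the two branch coordinates by $\zeta$ on $E$ and by $\zeta^{-1}$ on $C_J$; therefore the same action of $\zeta$ on $L|_{\text{node}}$, expressed from the $C_J$-branch, equals $\exp(-2\pi i a_\infty/r) = \exp(2\pi i k/r)$, giving the claimed monodromy $k/r$. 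The main obstacle is bookkeeping with Deligne--Mumford conventions---the $\mathbb Q$-degrees of orbifold divisors, the orbifold adjunction formula for $\omega_C|_E$ at an indexed node, and the inversion of monodromies across a balanced node---rather than any conceptual subtlety; in passing I would also verify that the monodromy of $P|_E$ at the node is automatically trivial, which holds because both $L^{-r}$ and $\omega_C$ have trivial monodromy there.
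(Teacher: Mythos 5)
Your proposal is correct and follows essentially the same route as the paper's proof: the non-vanishing of $p$ on $E$ trivializes $P|_E$, yielding the degree relation $r\deg(L|_E)\equiv \sum_{j\in J}(1-b_j)-1 \bmod r$ that pins down the monodromy $a_\infty/r$ on the $E$-branch, after which representability determines the index $r'$ and balancedness gives the monodromy $k/r$ on the $C_J$-branch. You merely carry out the $\mathbb Q$-degree bookkeeping more explicitly than the paper does.
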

\begin{proof}
  We have an isomorphism of line bundles
  \[
    (L|_{E})^r \cong \omega_C|_E\! ~\big(\!\!\!\!\!\sum_{j\in J,b_j\neq r}(1-b_j)y_j +
    \!\!\!\!
    \sum_{j\in J, b_j=r}y_j\big).
  \]
  Hence
  \[
    r\deg(L|_{E})     \equiv \sum_{j\in J}(1-b_j) -1 \mod r .
  \]
  Since the node is the only
  orbifold point of $E$, the fractional part of $\deg(L|_E)$ must come from the
  orbifold structure at the node.
  The monodromy of $L|_{E}$ at the node follows from the fact that $L$ is
  representable; the monodromy of $L|_{C_J}$ at the node follows from the fact
  that the node is balanced.
  \end{proof}

The stack structure of $F^{1/r}_J$ is more subtle due to the existence of the
``ghost automorphisms''.
We set
\[
  \gamma_J =
  \big(\frac{a_1}{r},\cdots,\frac{a_m}{r},\frac{k}{r}\big|(\frac{b_j}{r})_{j\not\in
  J}\big)
\quad \text{and} \quad
  \gamma_{E} = \big(\frac{a_\infty}{r}\big|\frac{k}{r}\big).
\]
Consider the moduli spaces
$\overline{M}^{1/r}_{g,\gamma_J}$ and $\tilde M^{1/r}_{0,\gamma_E}$, where
the markings are labeled as
  $(x_1,\cdots,x_m,x_J;(y_j)_{j\not\in J})$ and $(x_\infty;y_1)$
respectively.

We want to glue the two gerbes $x_J$ and $x_\infty$ to get a balanced node. For
any scheme $S$, we define a category $\mathcal I(S)$. An object of
$\mathcal  I(S)$ is a $4$-tuple $(\Sigma,q,L,s)$, where $q:\Sigma\to S$ is a gerbe banded by
$\pmb\mu_{r^\prime}$; $L$ is a line bundle on $\Sigma$ with monodromy
$\frac{k}{r}$; $s$ is a non-vanishing section of $L^r$. An arrow
$(\Sigma,q,L,s)\to (\Sigma^\prime,q^\prime,L^\prime,s^\prime)$ is a pair
$(f,\eta)$, where $f:\Sigma \to \Sigma^\prime$ is an $S$-isomorphism of banded gerbes, and  $\eta:L\to
f^*L^\prime$ is an isomorphism of line bundles that takes $s$ to $f^*s^\prime$.
A priori, $\mathcal  I(S)$ is only a $2$-category. But representablity of $L$ implies that
the automorphism group of any arrow is trivial. Hence $\mathcal  I(S)$ is equivalent
to a category.
\begin{lemma}
  \label{lem_inertia}
  The category $\mathcal  I(S)$ is naturally equivalent to
  $B\pmb\mu_{r/r^\prime}(S)$.
\end{lemma}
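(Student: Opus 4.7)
The plan is to construct an explicit functor $F \colon \mathcal{I}(S) \to B\pmb\mu_{r/r'}(S)$ and verify it is an equivalence. Throughout, write $d = r/r' = \gcd(r,k)$ and $k' = k/d$, so that $\gcd(k', r') = 1$. At the object level, $F$ sends $(\Sigma, q, L, s)$ to the $\pmb\mu_d$-torsor $(M, \tau)$ on $S$ constructed as follows: the monodromy character $\chi$ of $L$ is $\zeta \mapsto \zeta^{k'}$, and since $\chi^{r'}$ is trivial, $L^{r'}$ descends to a line bundle $M := \pi_* L^{r'}$ on $S$; pushing down the nowhere-vanishing section $s$ of $L^r = (L^{r'})^{\otimes d}$ yields an isomorphism $\tau \colon M^{\otimes d} \xrightarrow{\sim} \mathcal{O}_S$. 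On morphisms, $F$ sends $(f, \eta)$ to $\pi_* \eta^{\otimes r'}$.

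I would then verify essential surjectivity by working \'etale-locally on $S$. Over a strictly Henselian base, every $\pmb\mu_{r'}$-gerbe is trivial, every line bundle is trivial, and every unit admits an $r$-th root; hence any $(\Sigma, q, L, s)$ is isomorphic to a ``standard'' object $(B\pmb\mu_{r'} \times S, L_0, s_0)$ with $L_0$ of character $\chi$ and $s_0 = 1$. This standard object maps under $F$ to the trivial $\pmb\mu_d$-torsor. Since any $\pmb\mu_d$-torsor is also \'etale-locally trivial and $F$ commutes with base change, essential surjectivity follows by descent.

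For fully faithfulness, it suffices to check that $F$ induces isomorphisms on automorphism groups. Fix the standard object. Over a strictly Henselian base, $H^1(S, \pmb\mu_{r'}) = 0$, so any automorphism $(f, \eta)$ has $f$ 2-isomorphic to the identity. Taking $f = \mathrm{id}$, the map $\eta$ is multiplication by some $\lambda \in \mathcal{O}_S^\times$ with $\lambda^r = 1$, so $\lambda \in \pmb\mu_r(S)$. In the 1-category $\mathcal{I}(S)$ obtained by truncating the natural 2-category, two such pairs are identified when related by a 2-iso of $\mathrm{id}$, which corresponds to $\zeta \in \pmb\mu_{r'}(S)$ acting on $L_0$ by $\chi(\zeta) = \zeta^{k'}$; since $\gcd(k', r') = 1$, this exhausts all of $\pmb\mu_{r'}(S) \subset \pmb\mu_r(S)$. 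Hence the automorphism group of the standard object in $\mathcal{I}(S)$ is $\pmb\mu_r(S)/\pmb\mu_{r'}(S)$, and $F$ on automorphisms is $\lambda \mapsto \lambda^{r'}$, which induces the canonical isomorphism $\pmb\mu_r/\pmb\mu_{r'} \xrightarrow{\sim} \pmb\mu_{r/r'}$.

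The main obstacle is the 2-categorical bookkeeping: one must verify rigorously that representability of $L$ (equivalently, $\gcd(k', r') = 1$) kills all 2-automorphisms of arrows, so that $\mathcal{I}(S)$ is indeed a genuine 1-category, and that the 2-isomorphisms between distinct arrows act on $\eta$ via the character $\chi$ precisely as described above. Once these 2-categorical details are pinned down, the remaining content of the lemma is the purely group-theoretic identification $\pmb\mu_r/\pmb\mu_{r'} \cong \pmb\mu_{r/r'}$.
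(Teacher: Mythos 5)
Your construction of the forward functor is exactly the one in the paper: descend $L^{r'}$ (which has trivial monodromy) to $M$ on $S$ and push $s$ down to a trivialization of $M^{\otimes r/r'}$, using the identification of $B\pmb\mu_{r/r'}(S)$ with pairs $(M,t)$. Where you diverge is in verifying that this is an equivalence. The paper simply writes down the explicit quasi-inverse: given $(M,t)$, take $q\colon\Sigma\to S$ to be the stack of $r'$-th roots of $M$, let $L$ be the universal root, and pull $t$ back to $s$; one then checks the two constructions are mutually inverse. You instead argue \'etale-locally: over a strictly Henselian base every object is isomorphic to a standard one, and you compute the automorphism group of the standard object in the $1$-truncation to be $\pmb\mu_r(S)/\pmb\mu_{r'}(S)$, with $F$ inducing $\lambda\mapsto\lambda^{r'}$, then conclude by descent. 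Both routes are valid. The paper's is shorter and produces the inverse functor for free; yours costs more bookkeeping (you must justify that $\mathcal I$ is an \'etale stack, that checking automorphisms of the locally unique object suffices for full faithfulness of the Isom-sheaves, and the $2$-categorical points you flag at the end — all of which do go through, since $\gcd(k',r')=1$ kills the $2$-automorphisms exactly as you say), but it has the virtue of making explicit that the residual automorphism group $\pmb\mu_r/\pmb\mu_{r'}\cong\pmb\mu_{r/r'}$ is precisely the ``ghost automorphism'' group that makes the stack structure of $F_J^{1/r}$ subtle, which is the point of the surrounding discussion.
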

\begin{proof}
  We have an isomorphism of stacks $B\pmb\mu_{r/r^\prime}\cong [\mathbb C^*/\mathbb C^*]$,
  where the action of $\lambda\in \mathbb C^*$ is multiplication by
$\lambda^{r/r^\prime}$. Using this isomorphism, an object in $B\pmb\mu_{r/r^\prime}(S)$ is a line bundle $M$ on $S$
  and a non-vanishing section $t$ of $M^{r/r^\prime}$.
  Given $(\Sigma,q,L,s)$, the line bundle $L^{r^\prime}$ has trivial monodromy, hence descends to a
  line bundle $M$ on $S$. The section $s\in H^0(\Sigma,
  (L^{r^\prime})^{r/r^\prime})$
  descends to a non-vanishing section $t$ of $M^{r/r^\prime}$. Conversely,
  given $(M,t)$, let $q:\Sigma\to S$ be the stack of $r^\prime$-th roots of $M$,
  and $L^{r^\prime}\cong q^* M$ be the universal $r^\prime$-th root. The
  section $t$ pulls back to a non-vanishing section $s$ of $L^r$. We can check
  that these two procedures are inverses to each other and define an equivalence of categories.
\end{proof}
\begin{remark}
  Actually $B\pmb\mu_{r/r^\prime}$ is a connected component of the rigidified inertia stack
  $\overline{\mathcal  I}_{r^\prime}B\pmb\mu_r$, which classifies gerbes
  $\Sigma$ banded by $\pmb\mu_{r^\prime}$ with representable $\Sigma
  \to B\pmb \mu_r$ \cite{abramovich2008lectures}.
\end{remark}

Consider any $S$-families
\[
  \xi_J = (C_J,\pi_J,x_1,\cdots,x_m,x_J;(y_j)_{j\not\in J},L_J,p_J) \in
  \overline{M}^{1/r}_{g,\gamma_J}(S)
\]
and
\[
  \xi_E = (E,\pi_E,x_\infty;y_1,N,L_E,v_1,v_2,p_E) \in \tilde M^{1/r}_{0,\gamma_E}(S).
\]
By definition, the marking $x_J$ is a gerbe banded by $\pmb\mu_{r^\prime}$ over
$S$. The restriction of the line bundle
\[
  \omega_{C_J/S}\big(x_J+\sum_{i=1}^m x_i+ \sum_{j\not\in J, b_j\neq r}
  (1-b_j)y_j + \sum_{j\not\in J, b_j= r} y_j \big)
\]
to $x_J$ is canonically trivial. Hence $p_J|_{x_J}$ is a
non-vanishing section of $L_J^r|_{x_J}$.
By Lemma \ref{lem_inertia}, this defines an
``evaluation'' map
\[
  \mathrm{ev}_{x_J}: \overline{M}^{1/r}_{g,\gamma_J} \longrightarrow B\pmb\mu_{r/r^\prime}.
\]
Similarly, we have an evaluation map $ \widehat{\mathrm{ev}}_{x_\infty}$ by ``evaluating'' at $x_\infty$ but
reversing the banding of $x_\infty$
\[
  \widehat{\mathrm{ev}}_{x_\infty} : \tilde M^{1/r}_{0,\gamma_E} \longrightarrow  B\pmb\mu_{r/r^\prime}.
\]

We now define a morphism
\[
  \imath_J:\overline{M}^{1/r}_{g,\gamma_J}\underset{B\pmb\mu_{r/r^\prime}}{\times}
  \tilde M^{1/r}_{0,\gamma_E} \longrightarrow \tilde M^{1/r}_{g,\gamma},
\]
where the fibre product is formed via $\mathrm{ev}_{x_J}$  and  $\widehat{\mathrm{ev}}_{x_\infty}$.
Consider any $S$-families $\xi_J$ and $\xi_E$ as above. We modify $\xi_E$ to get a
  new family $\xi_E^\prime$ as follows.
First let
  \[
    L_E^\prime = L_E (-cy_1),
  \]
  where
  \[
    c = \begin{cases}
      \ell - \big|\{j\in J:b_j =r\}\big| \cond{k\neq r}\\
      \ell - \big|\{j\in J:b_j =r\}\big|+1 \cond{k= r}\\
    \end{cases}.
  \]
  Then for each $j\in J$, set $y_j=y_1$.  We have natural isomorphisms
  \[
    L_E^{-r}\otimes
    \omega_{E/S}(x_\infty+(1-k)y_1)
    \cong (L_E^\prime)^{-r}\otimes \omega_{E/S}(x_\infty+\!\!\!\!\sum_{j\in J,b_j\neq
      r}\!\!\!(1-b_j)y_j + \!\!\!\!
    \sum_{j\in J,b_j=r}
    \!\!\!
    y_j) ,\text{  if  }k\neq r,
  \]
  or
  \[
    L_E^{-r} \otimes
    \omega_{E/S}(x_\infty+y_1)
    \cong
    (L_E^{\prime})^{-r}\otimes \omega_{E/S}(x_\infty+
    \!\!\!
    \sum_{j\in J,b_j\neq r}\!\!\!(1-b_j)y_j + \!\!\!
    \sum_{j\in J,b_j=r}
    \!\!\!
    y_j),\text{  if  }k= r.
  \]
  Let $p^\prime_E$ be the image of $p_E$ under either isomorphism, depending on
  whether $k=r$. We get a new family
\begin{equation*}
  \xi_E^\prime = (C_E,\pi_E,x_\infty;(y_j)_{j\in J},N,L^\prime_E,v_1,v_2,p^\prime_E).
\end{equation*}
By Lemma \ref{lem_inertia}, an $S$-point of
$\overline{M}^{1/r}_{g,\gamma_J}\underset{B\pmb\mu_{r/r^\prime}}{\times}
\tilde M^{1/r}_{0,\gamma_E}$  consists of $(\xi_J, \xi_E)$ as above, and an $S$-isomorphism
between
\[
  \theta_J=(x_J,\pi_J|_{x_J},L_J|_{x_J},p_J|_{x_J})
  \quad \text{and} \quad
  \theta_E = (x_\infty,\pi_{E}|_{x_\infty},L_E|_{x_\infty},p_E|_{x_\infty}).
\]
Since to get $L^\prime_E$ we have only modified $L_E$ near $y_1$,
$\theta_E$ is naturally isomorphic to
\[
  \theta_E^\prime =
  (x_\infty,\pi_{E}|_{x_\infty},L^\prime_E|_{x_\infty},p^\prime_E|_{x_\infty}).
\]
We use the isomorphism $\theta_J\cong \theta^\prime_E$ to glue $\xi_J$ and
$\xi^\prime_E$ along $x_J$ and $x_\infty$, and get
\[
  \xi = (C,\pi,x_1,\cdots,x_m;y_1,\cdots,y_n,N,L,v_1,v_2,p) \in \tilde M^{1/r}_{g,\gamma}(S).
\]
This defines the morphism $\imath_J$.
For more about gluing stacks and their morphisms, see \cite{abramovich2008gromov}.
  See also Section 2.3 in \cite{Chiodo:2009hbu} on gluing the spin structures.

\begin{lemma}
  \label{stack_F1/r_J}
  The morphism $\imath_J$ induces an isomorphism onto the substack $F^{1/r}_J$.
\end{lemma}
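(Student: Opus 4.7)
\emph{Proof plan.} The goal is to construct a two-sided inverse $\jmath_J$ to $\imath_J$ by normalizing the underlying curve at the distinguished node.

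Given an $S$-point $\xi = (C,\pi,\mathbf x;\mathbf y,N,L,v_1,v_2,p) \in F^{1/r}_J(S)$, the description of $F^{1/r}_J$ forces a canonical decomposition $C = C_J \cup E$, with $E$ a smooth rational orbifold tail containing $y_1$ and all $y_j$ for $j \in J$, meeting $C_J$ at a single balanced node. Normalizing at this node introduces orbifold heavy markings $x_J \subset C_J$ and $x_\infty \subset E$, both banded by $\pmb\mu_{r^\prime}$, and the monodromies of $L$ at $x_J, x_\infty$ are forced to be $k/r$ and $a_\infty/r$ by the previous lemma. I would define $\jmath_J(\xi) = (\xi_J, \xi_E, \eta)$, where $\xi_J = (C_J, \pi_J, \mathbf x, x_J; (y_j)_{j\notin J}, L|_{C_J}, p_J)$ with $p_J$ the restriction of $p$; $\xi_E = (E, \pi_E, x_\infty; y_1, N, L|_E(c y_1), v_1, v_2, p_E)$ obtained from $L|_E$ by inverting the twist $L_E \mapsto L_E(-cy_1)$ used in the construction of $\imath_J$, with $p_E$ the image of $p|_E$ under the inverse of the isomorphism displayed just before the lemma; and $\eta$ the tautological identification of $(L|_{x_J}, p|_{x_J})$ with $(L|_{x_\infty}, p|_{x_\infty})$ arising from restricting $(L,p)$ to either side of the node. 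By Lemma \ref{lem_inertia} this $\eta$ is exactly the datum of an $S$-point of the fibre product over $B\pmb\mu_{r/r^\prime}$.

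I would then verify $\imath_J \circ \jmath_J \simeq \mathrm{id}$ and $\jmath_J \circ \imath_J \simeq \mathrm{id}$ by invoking the standard fact that normalization and gluing at a balanced node are inverse operations for nodal twisted curves \cite{abramovich2008gromov} and their $r$-spin lifts (see Section 2.3 of \cite{Chiodo:2009hbu}). The extra datum $(N, v_1, v_2)$ lives on the base $S$ and is transparent to both operations, so it passes through unchanged; the image of $\imath_J$ automatically lies in $F^{1/r}_J$ because $v_1, v_2$ remain nonzero throughout $S$ and the curve produced has the required rational tail geometry.

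The main obstacle is the bookkeeping of sections near the node. One must check that on $E$ the divisorial contributions $\sum_{j \in J, b_j \neq r}(1-b_j)y_j + \sum_{j \in J, b_j = r} y_j$ appearing in the definition of $P$ collapse at $y_1$ and, using the defining relation $r\ell + k = 1 + \sum_{j \in J}(b_j - 1)$ together with the adjunction $\omega_C|_E \cong \omega_E(x_\infty)$, coincide with $(1-k)y_1$ (resp.\ $y_1$ when $k = r$) up to precisely the twist $-rc\cdot y_1$ on $L^{-r}$. This integer calculation both forces the formula for $c$ in the two cases and confirms that the resulting $p_E$ is a legitimate section of the bundle required in Definition \ref{defn_spin_master} applied to $\xi_E$. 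Representability of $L$ ensures via Lemma \ref{lem_inertia} that the gerbe-gluing category $\mathcal I(S)$ is genuinely $1$-categorical, so no higher automorphism data is lost, and the ghost $\pmb\mu_{r/r^\prime}$ automorphisms of a glued spin curve are exactly those recorded by the fibre product.
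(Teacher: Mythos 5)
Your plan is correct, but it takes a genuinely different route from the paper. You build a two-sided inverse $\jmath_J$ by normalizing the universal curve along the distinguished node and undoing the twist $L_E\mapsto L_E(-cy_1)$, so that the lemma becomes an equivalence of groupoids over every base $S$. The paper instead avoids constructing an inverse on families altogether: it checks (i) that $\imath_J$ induces isomorphisms on automorphism groups (because automorphisms of the glued object are exactly pairs of automorphisms of $\xi_J$ and $\xi_E$ compatible with the identification $\theta_J\cong\theta_E$, which is precisely the automorphism data of the fibre product), and (ii) that $\imath_J$ is bijective on closed points; it then concludes by general principles, since $\imath_J$ is representable and proper and $F^{1/r}_J$ is smooth, being a fixed-point component of the smooth stack $\tilde M^{1/r}_{g,\gamma}$. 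What the paper's argument buys is that all the delicate family-level verifications are reduced to a pointwise check plus soft properties; what your argument buys is an explicit description of the inverse, at the cost of having to justify that the decomposition $C=C_J\cup E$, the restriction of $(L,p)$ to the two branches, and the tautological gluing datum $\eta$ are all functorial over an arbitrary base $S$. That justification is available (the separating node persists on the fixed locus, so one can normalize the universal curve along the corresponding gerbe, as in \cite{abramovich2008gromov}), and you correctly identify the two genuinely delicate points: the integer bookkeeping that forces $c$, and the fact that Lemma \ref{lem_inertia} together with representability of $L$ makes the gluing data $1$-categorical. One detail to make explicit when executing the plan: the identification coming from the node reverses the banding of the $\pmb\mu_{r'}$-gerbe (the node is balanced), which is why it is an isomorphism between $\mathrm{ev}_{x_J}(\xi_J)$ and $\widehat{\mathrm{ev}}_{x_\infty}(\xi_E)$ with the hatted, banding-reversed evaluation map, exactly as required by the fibre product defining the source of $\imath_J$.
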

\begin{proof}
An automorphism of $\xi$ consists of an automorphism of $\xi_J$ and automorphism
of $\xi_E^\prime$ that respect the identification of $\theta_J$ and
$\theta^\prime_E$. Moreover, the automorphism groups of $\xi^\prime_E$
and $\xi_E$  are naturally isomorphic. Hence the automorphisms of $\xi$ are
precisely the automorphisms of $(\xi_J,\xi_E,\theta_J\cong \theta_E)$ in the
fibre product of stacks.
Hence the morphism $\imath_J$ induces an
isomorphisms of automorphism groups.

It is easy to see that $\imath_J$ induces a bijection of closed points onto
$F_J^{1/r}$. Note that $F_J^{1/r}$ is smooth since it is a fixed-point
component in a smooth stack. Since $\imath_J$ is representable and proper, it
must be an isomorphism onto $F_J^{1/r}$.
\end{proof}

We now compute the equivariant normal bundle of $F_{J}^{1/r}$.
Recall that (\ref{action}) defines the $\mathbb C^*$-action on $\tilde
M^{1/r}_{0,\gamma_E}$
\[
  \lambda\cdot (E,\pi_E,x_\infty;y_1,L,N,v_1,v_2,p) =
  (E,\pi_E,x_\infty;y_1,L,N,\lambda v_1,v_2,p), ~\lambda \in \mathbb C^*.
\]
This also defines an action on the universal curve $\mathcal  E$ over
$\tilde M^{1/r}_{0,\gamma_E}$. Moreover, the map $\mathrm{pr}^*_2\mathcal  E\to \mathcal  C$
induced by $\imath_J$ is
equivariant, where $\mathcal  C$ is the universal curve over $\tilde
M_{g,\gamma}^{1/r}$, and $\mathrm{pr}_2:\overline{M}^{1/r}_{g,\gamma_J}\underset{B\pmb\mu_{r/r^\prime}}{\times}
\tilde M^{1/r}_{0,\gamma_E}\to \tilde M^{1/r}_{0,\gamma_E}$ is the second projection.

We now describe the $\mathbb C^*$-action on $\mathcal  E$. Note that $\tilde M^{1/r}_{0,\gamma_E}\cong B\pmb
\mu_r$ and the $\mathbb C^*$-action on $\tilde M^{1/r}_{0,\gamma_E}$ is trivial.
The stability condition requires $v_2$ to be non-vanishing.
Hence $(N,v_1,v_2)$ is equivalent to a tangent vector $v_1/v_2$ at $y_1$. Let
$z$ be the coordinate on the coarse moduli $|\mathcal  E|$ of $\mathcal E$ so that $y_1$ is at
$z=0$ and $x_\infty$ is at $z=\infty$. Then the induced action on $|\mathcal
E|$  is given by
\begin{equation}
  \label{action_on_P1}
  \lambda \cdot z = \lambda^{-1} z, ~\lambda \in \mathbb C^*.
\end{equation}

Let $T_{x_J}$ (resp. $\!T_{x_\infty}$) be line bundle on
$\overline M^{1/r}_{g,\gamma_J}$ (resp. $\tilde M^{1/r}_{0,\gamma_E}$) formed by the
tangent spaces of the coarse curves along the marking $x_J$ (resp. $x_\infty$).
Recall that for an integer $w$, $\mathbb C_w$ is the standard $\mathbb C$ with
the $\mathbb C^*$-action of weight $w$.

\begin{proposition}
  \label{normal_spin}
  The normal bundle of
  $F_J^{1/r}$ in $\tilde M^{1/r}_{g,\gamma}$ is isomorphic to
  \[
    \mathcal   N_{\mathrm{node}} \oplus \big(\mathcal  O_{F_J^{1/r}}\otimes_{\mathbb C}
    \mathbb C_{-1}\big)^{\oplus(|J|-1)}.
  \]
  where $\mathcal  N_{\mathrm{node}}$ is a line bundle such that $\mathcal N_{\mathrm{node}}^{r^\prime}\cong T_{x_J} \boxtimes
  T_{\infty}$, and $|J|$ is the cardinality of $J$.
\end{proposition}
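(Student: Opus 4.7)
The plan is to use the isomorphism $\imath_J$ from Lemma \ref{stack_F1/r_J} and analyze first-order deformations of a general point $\xi\in F_J^{1/r}$ at the node and at the coincident light markings. After partially normalizing the universal curve at the node $C_J\cap E$, standard deformation theory of twisted curves with $r$-spin structure splits the tangent space $T_\xi\widetilde M^{1/r}_{g,\gamma}$ into three pieces: (a) deformations of $\xi_J$ paired compatibly with deformations of $\xi_E$ at the gluing gerbe, (b) a one-dimensional node-smoothing direction, and (c) $|J|-1$ directions that separate the coincident markings $y_j$ ($j\in J\setminus\{1\}$) from $y_1$ on $E$. By Lemma \ref{stack_F1/r_J} the tangent space of the fibre product coincides with (a), so the normal bundle is precisely (b)$\oplus$(c).

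For the node-smoothing direction I invoke the standard orbifold node smoothing formula: at a balanced node of orbifold index $r^\prime$ the smoothing parameter on the coarse moduli is the $r^\prime$-th power of the orbifold smoothing parameter, so this contributes a line bundle $\mathcal N_{\mathrm{node}}$ on $F_J^{1/r}$ satisfying $\mathcal N_{\mathrm{node}}^{r^\prime}\cong T_{x_J}\boxtimes T_{x_\infty}$. The equivariant structure is inherited from the action on $T_{x_\infty}$: using the coordinate $w=1/z$ near $x_\infty$, (\ref{action_on_P1}) gives $\lambda\cdot w=\lambda w$, so $T_{x_\infty}$ carries $\mathbb C^*$-weight $+1$, while $T_{x_J}$ is fixed by the action on $C_J$ and has weight $0$. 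This pins down the equivariant structure of $\mathcal N_{\mathrm{node}}$ implicitly through its $r^\prime$-th power.

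For the marking-separation directions, each $y_j$ with $j\in J\setminus\{1\}$ is pinned to $y_1$ on $F_J^{1/r}$, and moving it infinitesimally off $y_1$ is an infinitesimal section of the tangent line $T_{y_1}\mathcal E$ to the coarse curve. Choosing the coordinate $z$ on $|\mathcal E|$ placing $y_1$ at $0$ and $x_\infty$ at $\infty$ canonically trivializes $T_{y_1}\mathcal E$ on $F_J^{1/r}$, and by (\ref{action_on_P1}) the vector $\partial/\partial z$ is scaled by $\lambda^{-1}$, so $T_{y_1}\mathcal E\cong \mathcal O\otimes\mathbb C_{-1}$ equivariantly. Summing over $j\in J\setminus\{1\}$ yields $(\mathcal O_{F_J^{1/r}}\otimes_{\mathbb C}\mathbb C_{-1})^{\oplus(|J|-1)}$.

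The main obstacle is step (a): verifying that the deformations of $\xi$ fixing the node and the coincidences $y_j=y_1$ for $j\in J$ are parametrized exactly by the tangent of the fibre product, with all the bookkeeping for the $r$-spin structure, the $B\pmb\mu_{r/r^\prime}$-gluing data from Lemma \ref{lem_inertia}, and the banded $\pmb\mu_{r^\prime}$-structure at the node kept consistent. Once that identification is in place, the orbifold smoothing identity and the equivariant weight computation are essentially formal.
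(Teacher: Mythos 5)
Your decomposition of the normal directions into a node-smoothing line plus $|J|-1$ marking-separation directions, the relation $\mathcal N_{\mathrm{node}}^{r^\prime}\cong T_{x_J}\boxtimes T_{x_\infty}$ from the orbifold node, and the weight $-1$ on $T_{y_1}$ read off from the action $z\mapsto\lambda^{-1}z$ on the rational tail are exactly the paper's argument, and your equivariant bookkeeping is consistent with the paper's Lemma on the virtual normal bundle. So the proposal is correct in outline and follows essentially the same route.

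The one substantive caveat is that the two points you treat as routine --- your step (a), and the assertion that the tangent space \emph{splits} as (a)$\oplus$(b)$\oplus$(c) --- are where the paper actually does its work. The direct-sum splitting is not automatic: it amounts to the transversality, inside the Artin stack $\mathfrak M^{\mathrm{tw}}$ of twisted curves, of the locus $\mathfrak Z^{\mathrm{tw}}$ where the separating node exists and the locus $\mathfrak M^{\prime,\mathrm{tw}}$ where the markings $y_j$, $j\in J$, coincide; the paper extracts this from Olsson's explicit versal deformation of a twisted curve, which also yields the $r^\prime$-th power relation for $\mathcal N_{\mathrm{node}}$. Moreover, on $F_J^{1/r}$ the rational tail $E$ has an infinitesimal automorphism that must cancel against the deformation of the datum $(N,v_1,v_2)$; the paper handles this by comparing the distinguished triangles $T_{y_1}\to\mathcal T_U\to\mathbb T_{\mathfrak M^{\mathrm{tw}}}$ and $T_{y_1}\to\mathcal T_{F_J^{1/r}}\to\mathbb T_{\mathfrak Z^{\prime,\mathrm{tw}}}$ and taking cones, which simultaneously establishes your (a). If you supply these two verifications (or cite Olsson's versal deformation and run the triangle comparison), the proof is complete as you have set it up.
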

\begin{proof}
  As in the proof of Proposition 2.1.1 in \cite{10.2307/1194469}, the
  deformation and obstruction of stable $r$-spin curves with mixed $\gamma$-weighted
  markings are identical to those of the underlying twisted curves with mixed weighted markings.
  There is no obstruction in our case since $\tilde M^{1/r}_{g,\gamma}$ is smooth. The deformation of $\xi\in\tilde M^{1/r}_{g,\gamma}$ consists of two parts:
  the deformation of the underlying twisted curves and the deformation of the
  ``tangent vector'' $(N,v_1,v_2)$ (cf. Definition \ref{defn_spin_master}).
  For $\xi\in F^{1/r}_J$ the underlying twisted curve have infinitesimal automorphisms that
  ``cancel'' with the deformation of $(N,v_1,v_2)$.

  We first study the underlying twisted curves. Let $\mathfrak M^{\mathrm{tw}}$ be the
  Artin stack of twisted curves with balanced nodes and
  not necessarily distinct markings $x_1,\cdots,x_m,$ $y_1,\cdots,y_n$.
  Let $\mathfrak Z^{\mathrm{tw}}\subset \mathfrak M^{\mathrm{tw}}$ be the closed
  substack where the curve has a node separating $\{y_j\}_{j\in J}$ and $\{x_1,\cdots,x_n\}\cup
  \{y_j\}_{j\not\in J}$, and $\mathfrak M^{\prime,\mathrm{tw}}\subset \mathfrak
  M^{\mathrm{tw}}$ be the closed substack where $y_j$ is equal to  $y_1$ for all
  $j\in J$. Let $\mathfrak Z^{\prime,\mathrm{tw}}  = \mathfrak Z^{\mathrm{tw}}
  \times_{\mathfrak M^{\mathrm{tw}}}\mathfrak M^{\prime,\mathrm{tw}}$.
  Let $\mathfrak M,\mathfrak
  Z,\mathfrak M^{\prime},\mathfrak Z^\prime$ be the similarly defined moduli spaces of
  non-orbifold curves.

  The local structure of $\mathfrak M^{\mathrm{tw}}$ was studied in
  \cite{olsson2007}. The versal deformation of a twisted curve is in  Remark
  1.11 of \cite{olsson2007}.
  It follows that the substacks $\mathfrak M^{\prime,\mathrm{tw}}$ and $\mathfrak Z^{\rm{rw}}$
  intersect transversely along $\mathfrak   Z^{\prime,\mathrm{tw}}$.  Hence we
  have an isomorphism of normal bundles on $\mathfrak Z^{\prime,\mathrm{tw}}$
  \[
    \mathcal{N}_{\mathfrak  Z^{\prime,\mathrm{tw}}/\mathfrak  M^{\mathrm{tw}}}
    \cong
    \mathcal{N}_{\mathfrak Z^{\mathrm{tw}}/\mathfrak  M^{\mathrm{tw}}}
    \oplus
    \mathcal{N}_{\mathfrak M^{\prime,\mathrm{tw}}/\mathfrak  M^{\mathrm{tw}}}.
  \]
  For simplicity, here (and after) we have suppressed the pullback notation.

  We want to describe those normal bundles in terms of line bundles on the moduli of coarse curves.
  From the description of the versal deformation, we see that under the forgetful morphism
  $\mathfrak M^{\mathrm{tw}}\to \mathfrak M$, the smooth divisor $\mathfrak Z$ pulls
  back to $r^\prime \mathfrak Z^{\mathrm{tw}}$. Let $\mathcal  N_{\mathrm{node}}
  $ be $\mathcal{N}_{\mathfrak Z^{\mathrm{tw}}/\mathfrak  M^{\mathrm{tw}}}$, then we have an isomorphism on $\mathfrak Z^{\mathrm{tw}}$
  \[
    (\mathcal  N_{\mathrm{node}})^{r^\prime} \cong
    \mathcal{N}_{\mathfrak Z/\mathfrak M}.
  \]
  Similarly, we have
  \[
    \mathcal{N}_{\mathfrak M^{\prime,\mathrm{tw}}/\mathfrak  M^{\mathrm{tw}}} \cong
    T_{y_1}^{\oplus (|J|-1)},
  \]
  where $T_{y_1}$ is the line bundle formed by the tangent spaces of the curves at $y_1$.

  We claim that the normal bundle of $F^{1/r}_J$ is the pullback of
  $\mathcal{N}_{\mathfrak Z^{\prime,\mathrm{tw}}/\mathfrak M^{\mathrm{tw}}}$ via the forgetful morphism. We
  look at the $\mathbb C^*$-invariant open substack $U\subset \widetilde M^{1/r}_{g,m|n}$
  of $\xi$  where
  $v_1\neq 0$ and $v_2\neq 0$. It contains $F^{1/r}_J$. Over $U$ the data $(N,v_1,v_2)$ is equivalent to
  $v_1/v_2$, a nonzero tangent vector at the light marking $y_1$.
  Since the $r$-spin structure has trivial deformation and obstruction, the forgetful morphisms $U\to \mathfrak M^{\mathrm{tw}}$ and
  $F^{1/r}_J \to \mathfrak Z^{\prime,\mathrm{tw}}$
  induce a distinguished triangles on $U$
  \begin{equation}
    \label{triangle_U}
    T_{y_1}[0] \longrightarrow \mathcal{T}_U[0] \longrightarrow \mathbb{T}_{\mathfrak  M^{\mathrm{tw}}}\overset{+1}{\longrightarrow}
  \end{equation}
  and a distinguished triangle on $F^{1/r}_J$
  \begin{equation}
    \label{triangle_F}
    T_{y_1}[0] \longrightarrow \mathcal{T}_{F^{1/r}_J}[0] \longrightarrow \mathbb T_{\mathfrak Z^{\prime,\mathrm{tw}}} \overset{+1}{\longrightarrow},
  \end{equation}
  where $\mathcal T$ means the tangent bundle and $\mathbb T$ means the tangent complex.
  The distinguished triangle (\ref{triangle_F}) naturally maps to
  the restriction of (\ref{triangle_U}) to $F_J^{1/r}$. Taking the cones gives us
  an isomorphism $\mathcal{N}_{F^{1/r}_J/U} \cong \mathcal{N}_{\mathfrak  Z^{\prime,\mathrm{tw}}/\mathfrak  M^{\mathrm{tw}}}$ on $F^{1/r}_{J}$.

  To complete the proof, observe that on  $F^{1/r}_J$ we have
  \[
    \mathcal{N}_{\mathfrak Z/\mathfrak M} \cong T_{x_J} \boxtimes T_{x_\infty}
    \quad \text{and} \quad T_{y_1} \cong \mathcal
    O_{F^{1/r}_J}\otimes_{\mathbb C}\mathbb C_{-1}.
  \]
  To see the latter, notice that $y_1$ comes from the factor $\tilde M^{1/r}_{0,\gamma_E}\cong B\pmb
  \mu_r$, and $\pmb\mu_r$ acts trivially on the underlying curves.  Hence $T_{y_1}$ is a
  constant line bundle on ${F^{1/r}_J}$. The $\mathbb C^*$-action is
  given by the tangent map of (\ref{action_on_P1}). Hence is has weight $-1$.
\end{proof}

\section{The perfect obstruction theory and localization}
\label{phi_fields_master}
In this section we first define the virtual cycle on the master
space via introducing $\varphi$-fields.
That is parallel to Section \ref{obstruction_theory_single}.
Then we apply virtual localization to get the basic wall-crossing formula.
\subsection{The $\varphi$-fields and equivariant perfect obstruction theory}

As before, we fix non-negative integers $g,m,n$ such that $2g-2+m\geq 0$ and
$n\geq 1$. We also fix
\[\gamma =
  \big(\frac{a_1}{r},\dots,\frac{a_m}{r}\big|\frac{b_1}{r},\dots,\frac{b_n}{r}\big),
  \quad
a_i,b_j \in \{1,\cdots,r\}.
\]

\begin{definition}
  \label{defn_master_varphi}
   An $S$-family of stable $r$-spin  curves with  mixed $\gamma$-weighted markings and
   $\varphi$-fields consists of
  \[
    (C,\pi,x_1,\cdots,x_m;y_1,\cdots,y_n,N,L,v_1,v_2,p,\varphi_1,\cdots,\varphi_s),
  \]
  where
  \[
    (C,\pi,\mathbf x;\mathbf y,N,L,v_1,v_2,p) \in \tilde
    M^{1/r}_{g,\gamma}(S),
  \]
  and for $\alpha=1,\cdots,s$,
  \[
    \varphi_\alpha\in H^0(C,L(D_\alpha)), \quad
   D_{\alpha} = - \sum_{q_\alpha a_i \in \mathbb
     Z}x_i+\sum_{b_j\neq r}\left \lfloor
     q_\alpha (b_j-1) \right \rfloor y_j -\sum_{b_j= r}y_j.
  \]
  \end{definition}
  As in \cite{chang2015witten}, the category of such families is a
  Deligne--Mumford stack
  $\tilde M^{1/r,\varphi}_{g,\gamma}$ of finite type over $\mathbb C$, and we have a
  representable forgetful morphism
  \[
    \tau: \tilde M^{1/r,\varphi}_{g,\gamma} \longrightarrow  \tilde M^{1/r}_{g,\gamma}.
  \]

We now define the perfect obstruction theory on $\tilde M^{1/r,\varphi}_{g,\gamma}$.
We abbreviate
\[
  \tilde M^{1/r} = \tilde M^{1/r}_{g,\gamma} \quad \text{and} \quad
\tilde M^\varphi = \tilde M^{1/r,\varphi}_{g,\gamma} .
\]
Let $\pi:\mathcal  C\to \tilde M^{\varphi}$ be the universal curve,
$\omega_{\pi}$ be the relative
dualizing sheaf on $\mathcal  C$ and $\mathcal L$
be the universal $r$-spin bundle.
By abuse of notation, we will use $x_i,y_j$ and $D_\alpha$ to denote the
divisors on $\mathcal C$ as in Definition \ref{defn_master_varphi}.

As in \cite{chang2015witten}, $\tau$ has an equivariant relative perfect obstruction
theory $\mathbb E_{\tau} \to  \mathbb L_{\tau}$,
where $\mathbb L_\tau $ is the relative cotangent complex of $\tau$ and
\[
  \mathbb E_\tau = \bigg(\bigoplus_{\alpha=1}^sR\pi_*  \mathcal
    L^{w_\alpha}(D_\alpha) \bigg)^\vee.
\]
It admits a cosection
\[
  \sigma:  h^1(\mathbb E_\tau^\vee)\longrightarrow  R^1\pi_*\omega_{\pi}\cong \mathcal  O_{\tilde M^{\varphi}}
\]
defined by
\[
    \sigma(\dot \varphi_1,\cdots,\dot\varphi_s) = p\sum_{\alpha=1}^s
  \dot\varphi_\alpha \partial_\alpha W(\varphi_1,\cdots,\varphi_s).
\]
Here $h^1$ means the cohomology sheaf in degree $1$. The cosection is
well-defined for the same reason as in Section \ref{obstruction_theory_single}.

Since $\tilde M^{1/r}$ is smooth, we get an absolute perfect
obstruction theory on $\tilde M^{\varphi}$
\[
  \mathbb L^\vee_{\tilde M^\varphi} \longrightarrow  \mathbb E^\vee_{\tilde M^\varphi}
\]
which fits into the distinguished triangle
\begin{equation}
  \label{triangle}
  \tau^* \mathcal  T_{\tilde M^{1/r}}[-1] \longrightarrow \mathbb E_{\tau}^\vee \longrightarrow \mathbb E^\vee_{\tilde
    M^\varphi} \overset{+1}{\longrightarrow} .
\end{equation}
Taking cohomology sheaves, we have
\[
  h^1(\mathbb E_{\tau}^\vee )\longrightarrow h^1( \mathbb E^\vee_{\tilde M^\varphi}).
\]
\begin{lemma}
  The cosection $\sigma$ factors through the absolute obstruction sheaf $h^1( \mathbb
  E^\vee_{\tilde M^\varphi})$ and defines an equivariant cosection of the
  absolute obstruction theory.
\end{lemma}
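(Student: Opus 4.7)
The plan is to apply the cohomology functor to the distinguished triangle (\ref{triangle}) and reduce the factorization to a Leibniz-type identity for obstruction classes, in the spirit of \cite{chang2015witten}. Since $\tilde M^{1/r}$ is smooth, $\tau^*\mathcal T_{\tilde M^{1/r}}$ is concentrated in degree $0$, so taking $h^1$ of (\ref{triangle}) yields the right-exact sequence
\[
  \tau^*\mathcal T_{\tilde M^{1/r}} \xrightarrow{\kappa} h^1(\mathbb E_\tau^\vee) \longrightarrow h^1(\mathbb E^\vee_{\tilde M^\varphi}) \longrightarrow 0.
\]
Hence it suffices to prove $\sigma\circ\kappa=0$; the equivariance of the resulting absolute cosection will then be automatic, since both $\sigma$ and the distinguished triangle (\ref{triangle}) are $\mathbb C^*$-equivariant by construction.

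The next step is to describe $\kappa$ explicitly. A tangent class $v$ on $\tilde M^{1/r}$ represents an infinitesimal deformation of the data $(C,L,p,\mathbf x,\mathbf y,N,v_1,v_2)$, and its image $\kappa(v)\in\bigoplus_\alpha R^1\pi_*\mathcal L^{w_\alpha}(D_\alpha)$ has $\alpha$-component equal to the obstruction class $o_v(\varphi_\alpha)$ obtained by cupping $\varphi_\alpha$ with the Atiyah class of $\mathcal L^{w_\alpha}(D_\alpha)$ pulled back along $v$. I would then apply $\sigma$ and invoke the polynomial Leibniz rule for obstruction classes: because $W(\varphi)=\sum_\alpha \varphi_\alpha^{r/w_\alpha}$, one has $\sum_\alpha o_v(\varphi_\alpha)\partial_\alpha W(\varphi)=o_v(W(\varphi))$, and therefore
\[
  \sigma(\kappa(v)) \;=\; p\cdot o_v(W(\varphi)) \;=\; o_v(pW(\varphi)),
\]
where the last equality uses $o_v(p)=0$, which holds because $p$ is part of the data parametrized by $\tilde M^{1/r}$, so the tangent class $v$ automatically encodes a first-order extension of $p$.

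The remaining step, which I anticipate to be the main technical obstacle, is to show that $o_v(pW(\varphi))$ vanishes after mapping to $R^1\pi_*\omega_\pi\cong\mathcal O_{\tilde M^\varphi}$. The divisor bookkeeping preceding the definition of $\sigma$ in Section \ref{obstruction_theory_single} shows that $pW(\varphi)$ actually factors through the effective twist $\omega_\pi(-\Delta)$ with $\Delta$ supported on the markings, so $o_v(pW(\varphi))$ already comes from $R^1\pi_*\omega_\pi(-\Delta)$; its image under the trace $R^1\pi_*\omega_\pi\cong\mathcal O$ is the derivative along $v$ of the fibrewise residue of $pW(\varphi)$, which vanishes because the residues of $pW(\varphi)$ at every component of $\Delta$ are zero by the very construction of the $\varphi$-field twists. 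The careful verification of this last compatibility, tracking the orbifold structure at the $x_i$ and the divisor contributions at the light markings $y_j$, parallels the argument in \cite{chang2015witten} and is the technical heart of the proof.
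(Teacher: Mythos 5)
Your overall architecture matches the paper's (which simply defers to \cite{chang2015witten} for the factorization and adds one sentence for equivariance): take $h^1$ of the triangle (\ref{triangle}), use smoothness of $\tilde M^{1/r}$ to get the right-exact sequence $\tau^*\mathcal T_{\tilde M^{1/r}}\xrightarrow{\kappa} h^1(\mathbb E_\tau^\vee)\to h^1(\mathbb E^\vee_{\tilde M^\varphi})\to 0$, reduce to $\sigma\circ\kappa=0$, and establish that via the Leibniz rule $\sigma(\kappa(v))=o_v\big(pW(\varphi)\big)$. All of that is correct, as is your observation that equivariance is then automatic because $\sigma$ does not involve $v_1$, the only datum the torus rescales.

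The gap is in your final step. You claim $o_v(pW(\varphi))$ dies in $R^1\pi_*\omega_\pi\cong\mathcal O$ because its trace is ``the derivative along $v$ of the fibrewise residue of $pW(\varphi)$,'' which vanishes since the residues along $\Delta$ are zero. This is not the right mechanism: $pW(\varphi)$ is a \emph{holomorphic} section of $\omega_C$ (indeed of $\omega_C(-\Delta)$ with $\Delta$ effective), so its residues vanish trivially and carry no information about the obstruction class; the class $o_v(pW(\varphi))\in H^1(C,\omega_C)$ is the image of $pW(\varphi)$ under the connecting homomorphism of $0\to\epsilon\,\omega_C\to\omega_{\mathcal C'/S[\epsilon]}\to\omega_C\to 0$, not a derivative of any residue. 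The correct reason it vanishes (and the one used in \cite{chang2015witten}) is that \emph{every} section of $\omega_C$ extends over a first-order deformation of the curve: $R^1\pi_*\omega_\pi\cong\mathcal O$ is locally free and commutes with base change, hence so is $\pi_*\omega_\pi$ (of rank $g$), so $H^0(\mathcal C',\omega_{\mathcal C'/S[\epsilon]})\to H^0(C,\omega_C)$ is surjective and the connecting map is zero. Note also that this argument is insensitive to the fact that $v$ may move the markings (and hence the divisors $D_\alpha$ and $\Delta$), precisely because one lands in $\omega_C$ itself, which depends only on the curve; this is the point your ``divisor bookkeeping'' needs to make. With the residue claim replaced by the base-change argument, your proof is complete and agrees with the paper's intended one.
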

\begin{proof}
  The proof that $\sigma$ factors through the absolute obstruction sheaf is
  exactly the same as in \cite{chang2015witten}. Since the action is defined by
  scaling $v_1$ but the cosection $\sigma$ is independent of $v_1$, $\sigma$ is
  equivariant  (cf. \cite[Lemma 2.10]{chang2015mixed}).
\end{proof}

As in \cite{chang2015witten}, Serre duality implies that the degeneracy locus
$D(\sigma)$ is equal to $\tilde M^{1/r}_{g,\gamma}$, viewed as a closed subset
of $\tilde M^{\varphi}$ where all the $\varphi$-fields are
identically zero.
By \cite{kiem2013localizing} we have a cosection localized equivariant virtual
fundamental class
\[
  [\tilde M^\varphi]^{\mathrm{vir}}_{\mathrm{loc}} \in A^{\mathbb
    C^*}_{\delta(\gamma)}(\tilde M^{1/r}),
\]
where the virtual dimension
\[
  \delta(\gamma) = (3-s+2q)(g-1) + m + n + 1 - \sum_{\alpha=1}^s\Big(
  \sum_{i=1}^m \langle q_\alpha (a_i-1) \rangle + \sum_{j=1}^n \langle
  q_\alpha (b_j-1) \rangle\Big)
\]
can be computed by the orbifold Riemann-Roch formula.
\subsection{Virtual localization on $\tilde M^{1/r,\varphi}_{g,\gamma}$}

The $\mathbb C^*$-action (\ref{action}) on $\tilde M^{1/r}_{g,\gamma}$ lifts to
$\tilde M^{1/r,\varphi}_{g,\gamma}$ by scaling the $v_1$ of $\tilde M^{1/r,\varphi}_{g,\gamma}$.
The forgetful morphism $\tau:\tilde M^{1/r,\varphi}_{g,\gamma}
\to\tilde M^{1/r}_{g,\gamma} $ is equivariant.

\begin{lemma}
  \label{lem_fix_varphi}
  The fixed-point components of $\tilde M^{1/r,\varphi}_{g,\gamma}$ are
  \[
   F^{\varphi}_\star: = \tau^{-1}(F^{1/r}_{\star}), \quad \star = 0,\infty, J
  \]
  where $J$ runs over all subsets of $\{1,\dots,n\}$ such that $\{1\}\subsetneqq J$.
\end{lemma}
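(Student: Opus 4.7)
The strategy is to prove the two containments separately. The inclusion $(\tilde M^{1/r,\varphi}_{g,\gamma})^{\mathbb C^*}\subseteq\bigsqcup_\star\tau^{-1}(F^{1/r}_\star)$ is immediate: the forgetful morphism $\tau$ is $\mathbb C^*$-equivariant by construction of the lifted action, so it sends fixed points to fixed points, and by Section \ref{C_star_spin} the fixed locus of $\tilde M^{1/r}_{g,\gamma}$ decomposes as the disjoint union of the $F^{1/r}_\star$.

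For the reverse inclusion, fix $(\xi,\varphi)$ with $\xi\in F^{1/r}_\star$. Because the lifted action leaves the $\varphi$-fields untouched, $\lambda\cdot(\xi,\varphi)=(\lambda\cdot\xi,\varphi)$, so it is enough to exhibit an isomorphism $\rho_\lambda:\xi\to\lambda\cdot\xi$ in $\tilde M^{1/r}_{g,\gamma}$ whose induced pushforward on sections fixes each $\varphi_\alpha$. For $\star=0$ one takes $\rho_\lambda=\mathrm{id}$. For $\star=\infty$ the isomorphism can be realized purely by scaling the line bundle $N$, leaving the underlying curve and $r$-spin bundle alone, so it acts trivially on every $\varphi_\alpha$.

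The only nontrivial case is $\star=J$. Here $\rho_\lambda$ necessarily involves the automorphism $f_\lambda$ of $C$ that fixes $C_J$ pointwise and rotates the bubble $E$ as $z\mapsto\lambda^{-1}z$, so in principle it acts nontrivially on sections of $L^{w_\alpha}(D_\alpha)|_E$. I plan to show, however, that $\varphi_\alpha|_E$ must vanish identically for cohomological reasons. Starting from the non-vanishing of $p|_E$ together with $\omega_{C}|_E\cong\omega_E(x_\infty)$ to identify $L^r|_E$, then invoking the relation $r\ell+k=1+\sum_{j\in J}(b_j-1)$ from (\ref{k_and_l}) together with $\langle-q_\alpha\rangle=1-q_\alpha$ to treat the summands with $b_j=r$ uniformly, one obtains
\[
  \deg\bigl(L^{w_\alpha}(D_\alpha)|_E\bigr) \;=\; -q_\alpha - \sum_{j\in J}\bigl\langle q_\alpha(b_j-1)\bigr\rangle.
\]
Since $0<q_\alpha\leq 1/2$, this degree is strictly negative, forcing $H^0\bigl(E,L^{w_\alpha}(D_\alpha)|_E\bigr)=0$ on the rational orbifold $E$, and hence $\varphi_\alpha|_E=0$. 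Consequently $f_\lambda^*\varphi_\alpha=\varphi_\alpha$ trivially, so $\rho_\lambda$ preserves $\varphi$ and $(\xi,\varphi)$ is $\mathbb C^*$-fixed.

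The main obstacle is the degree computation itself: one has to correctly identify $\omega_E$ as $\rho^*\omega_{|E|}\otimes\mathcal O_E((r^\prime-1)x_\infty)$, track the orbifold contribution of $\mathcal O_E(x_\infty)$ at the node, and reorganize the integer shift $D_\alpha|_E$ at $y_1$ (to which all $y_j$ with $j\in J$ are collapsed) into fractional parts $\langle q_\alpha(b_j-1)\rangle$. Once the negativity is confirmed, the vanishing $\varphi|_E=0$ and therefore the lemma follow at once.
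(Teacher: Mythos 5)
Your proposal is correct and follows essentially the same route as the paper: one inclusion from equivariance of $\tau$, and the other by showing the action is trivial on each $\tau^{-1}(F^{1/r}_\star)$, with the key point in the $\star=J$ case being that the $\varphi$-fields vanish on the bubble $E$ for degree reasons. Your degree formula $\deg(L^{w_\alpha}(D_\alpha)|_E)=-q_\alpha-\sum_{j\in J}\langle q_\alpha(b_j-1)\rangle=-(k_{\alpha,J}+\ell_{\alpha,J})$ agrees with the computation the paper carries out later in the proof of Lemma \ref{normal_3}, so you have simply made explicit what the paper's proof leaves as ``for degree reasons.''
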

\begin{proof}
  Since $\tau$ is equivariant, it suffices to show that the $\mathbb C^*$-action
  on $F^{\varphi}_\star$ is trivial. For $J=0,\infty$, it follows immediately from the definition of $F^{1/r}_\star$.
  For $\star=J$, the underlying curve $C$ can be written as $C_J \cup E$ where $E$
  is the smooth rational subcurve containing $y_1$. The $\mathbb C^*$ acts trivially on $C_J$.
  For degree reasons, the $\varphi$-fields vanish on $E$. Hence the $\mathbb
  C^*$-action on $\tau^{-1}(F^{1/r}_{J})$ is trivial.
\end{proof}

By \cite{graber1999localization, kiem2013localizing},
the restriction of $\mathbb E^\vee_{\tilde
  M^\varphi}$ to $F^{\varphi}_\star$
decomposes as the direct sum of its fixed part and moving part.
The fixed part gives a perfect obstruction theory of $F^{\varphi}_\star$. The
cosection $\sigma$ restricts to a cosection on  $F^{\varphi}_\star$.
This gives a cosection localized virtual fundamental class
\[
  [F^{\varphi}_\star]_{\mathrm{loc}}^{\mathrm{vir}} \in A_*(F^{1/r}_\star).
\]
The moving part is the virtual normal bundle $\mathcal{N}^{\varphi}_\star$.
Let $\imath_\star:F^{1/r}_\star \to \tilde{M}^{1/r}_{g,\gamma}$ be
the inclusion.
We have the virtual localization formula
\begin{equation}
  \label{localization}
  [\tilde M^{1/r,\varphi}_{g,\gamma}]^{\mathrm{vir}}_{\mathrm{loc}} =
  \sum_{\star} (\imath_\star)_*
  \left(
    \frac{
      [F^{\varphi}_\star]_{\mathrm{loc}}^{\mathrm{vir}}
      }
      {c_{\mathrm{top}}^{\mathbb C^*}(\mathcal{N}^{\varphi}_\star|_{F^{1/r}_\star})}\right)
  \quad
  \text{ in  }
  A^{\mathbb C}_*(\tilde M^{1/r}_{g,\gamma})\otimes_{\mathbb Q[z]}\mathbb Q[z,z^{-1}],
\end{equation}
and
where $z\in A^1(B \mathbb C^*)$ is the first Chern class of $\mathbb C_1$, the
standard $\mathbb C$ with weight-$1$ action.

We have a stabilization map
\begin{equation}
  \label{st}
  \mathrm{st}:  \tilde M^{1/r}_{g,\gamma} \longrightarrow  \overline M^{1/r}_{g,\gamma}
\end{equation}
defined by forgetting the $(N,v_1,v_2)$ and then contracting the
unstable rational subcurves.
To construct this map, one can use the alternative description of of $r$-spin
curves as
balanced maps to Deligne--Mumford stacks in the proof of Theorem
\ref{proper_spin_master} (or in \cite{10.2307/1194469}). Then one can use
Corollary 9.1.3 of \cite{abramovich2002compactifying}.

The stabilization map $\mathrm{st}$ is $\mathbb C^*$-equivariant, where $\mathbb
C^*$  acts on $\overline M_{g,\gamma}^{1/r}$ trivially.
For any equivariant Chow cohomology class $\alpha\in A^*_{\mathbb
  C}(\tilde{M}^{1/r}_{g,\gamma})$,
we can cap both sides  of (\ref{localization}) with $\alpha$ and then
push it forward along the map $\mathrm{st}$. This gives us an equation
in
\[
  A^{\mathbb C^*}_*(\overline M^{1/r}_{g,\gamma}) \otimes_{\mathbb Q[z]}\mathbb
  Q[z,z^{-1}]= A_*(\overline M^{1/r}_{g,\gamma})
  \otimes_{\mathbb Q} \mathbb Q [z,z^{-1}].
\]
Since $ \mathrm{st}_*(\alpha\cap[\tilde
M^{1/r,\varphi}_{g,\gamma}]^{\mathrm{vir}}_{\mathrm{loc}})$ lies in $A^{\mathbb C^*}_*(\overline M^{1/r}_{g,\gamma}) =A_*(\overline
M^{1/r}_{g,\gamma}) \otimes_{\mathbb Q} \mathbb Q [z]$, we have
\begin{proposition}
  \label{prop_vanishing}
  For any $\alpha \in A^*_{\mathbb C}(\tilde{M}^{1/r}_{g,\gamma})$,
  the coefficients of negative degree powers of $z$ in
  \begin{equation}
    \label{relation_general}
    \mathrm{st}_*\bigg(   \sum_{\star} (\imath_\star)_*
      \bigg(
        \frac{
          (\imath_\star)^*\alpha \cap [F^{\varphi}_\star]_{\mathrm{loc}}^{\mathrm{vir}}
        }
        {c_{\mathrm{top}}^{\mathbb C^*}(\mathcal{N}^{\varphi}_\star|_{F^{1/r}_{\star}})}\bigg)
    \bigg) \in A_*(\overline M^{1/r}_{g,\gamma})
    \otimes_{\mathbb Q} \mathbb Q [z,z^{-1}]
  \end{equation}
  are zero.
\end{proposition}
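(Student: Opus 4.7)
The plan is to read off the vanishing directly from the virtual localization formula (\ref{localization}) together with the fact that the cosection-localized virtual cycle on the master space is honestly equivariant (not merely after inverting $z$).

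First I would observe that, by construction, the cosection-localized equivariant virtual fundamental class $[\tilde M^{1/r,\varphi}_{g,\gamma}]^{\mathrm{vir}}_{\mathrm{loc}}$ lies in $A^{\mathbb C^*}_*(\tilde M^{1/r}_{g,\gamma})$ itself, without any need to invert $z$. Hence for any equivariant Chow cohomology class $\alpha \in A^*_{\mathbb C^*}(\tilde M^{1/r}_{g,\gamma})$, the capped class $\alpha \cap [\tilde M^{1/r,\varphi}_{g,\gamma}]^{\mathrm{vir}}_{\mathrm{loc}}$ still lies in $A^{\mathbb C^*}_*(\tilde M^{1/r}_{g,\gamma})$.

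Next, push this class forward along the $\mathbb C^*$-equivariant stabilization map $\mathrm{st}: \tilde M^{1/r}_{g,\gamma} \to \overline M^{1/r}_{g,\gamma}$. The result lies in $A^{\mathbb C^*}_*(\overline M^{1/r}_{g,\gamma})$. Since $\mathbb C^*$ acts trivially on $\overline M^{1/r}_{g,\gamma}$, we have the identification
\[
A^{\mathbb C^*}_*(\overline M^{1/r}_{g,\gamma}) = A_*(\overline M^{1/r}_{g,\gamma}) \otimes_{\mathbb Q} \mathbb Q[z],
\]
which involves only non-negative powers of $z$.

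Finally, apply (\ref{localization}): capping both sides with $\alpha$ and pushing forward via $\mathrm{st}$ yields
\[
\mathrm{st}_*\big(\alpha \cap [\tilde M^{1/r,\varphi}_{g,\gamma}]^{\mathrm{vir}}_{\mathrm{loc}}\big) = \mathrm{st}_*\bigg( \sum_{\star} (\imath_\star)_* \bigg( \frac{(\imath_\star)^*\alpha \cap [F^{\varphi}_\star]_{\mathrm{loc}}^{\mathrm{vir}}}{c_{\mathrm{top}}^{\mathbb C^*}(\mathcal{N}^{\varphi}_\star|_{F^{1/r}_{\star}})}\bigg)\bigg)
\]
as an equality in $A_*(\overline M^{1/r}_{g,\gamma}) \otimes_{\mathbb Q} \mathbb Q[z,z^{-1}]$. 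The left-hand side lies in $A_*(\overline M^{1/r}_{g,\gamma}) \otimes_{\mathbb Q} \mathbb Q[z]$ by the previous step, so it has no negative powers of $z$. Therefore the right-hand side has no negative powers of $z$ either, which is exactly the claim. This is essentially a bookkeeping argument; there is no real obstacle, because all the non-trivial geometric input — properness of $\tilde M^{1/r}_{g,\gamma}$ and the construction of the cosection-localized virtual cycle — has already been established in the previous sections.
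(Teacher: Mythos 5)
Your proposal is correct and is essentially identical to the paper's argument: the paper likewise notes that $\mathrm{st}_*(\alpha\cap[\tilde M^{1/r,\varphi}_{g,\gamma}]^{\mathrm{vir}}_{\mathrm{loc}})$ lies in $A^{\mathbb C^*}_*(\overline M^{1/r}_{g,\gamma})=A_*(\overline M^{1/r}_{g,\gamma})\otimes_{\mathbb Q}\mathbb Q[z]$ and then invokes the localization formula (\ref{localization}) to conclude that the fixed-point sum has no negative powers of $z$. No changes needed.
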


\subsection{The contribution from each fixed-point component}
In this subsection we determine the contribution of each fixed-point component
$F^{\varphi}_\star$ to (\ref{relation_general}).  We will use the distinguished triangle (\ref{triangle})
to compute the fixed and moving parts of the restriction of $\mathbb
E^\vee_{\tilde M^\varphi}$ to each $F^{\varphi}_\star$.

We first consider $\star = 0,\infty$. Let
\[
  \gamma^\prime =
  \big(\frac{a_1}{r},\cdots,\frac{a_m}{r},\frac{b_1}{r}\big|\frac{b_2}{r},\cdots,\frac{b_n}{r}\big).
\]
Parallel to  Lemma \ref{normal_r_spin_0_infty}, we have isomorphisms
\[
  F^{\varphi}_0 \cong \overline M^{1/r,\varphi}_{g,\gamma^\prime} \quad
  \text{and}
  \quad F^{\varphi}_\infty \cong \overline{M}^{1/r,\varphi}_{g,\gamma}.
\]
The first isomorphism adds orbifold structure of index $r^\prime =
r/\gcd(r,k)$ along the $(m+1)$-th marking $y_1$. By abuse
of notation we will also use $\mathrm{st}$ to denote the stabilization map
$\mathrm{st}:\overline{M}^{1/r}_{g,\gamma^\prime} \to
\overline M^{1/r}_{g,\gamma}$. This map is the
restriction of the previous stabilization map (\ref{st}) to $F^{1/r}_0
\cong\overline{M}^{1/r}_{g,\gamma^\prime}$.

\begin{lemma}
  \label{normal_12}
  Under the isomorphisms above, we have
   \[
    \frac{
      [F^{\varphi}_0]_{\mathrm{loc}}^{\mathrm{vir}}
    }
    {c_{\mathrm{top}}^{\mathbb C^*}(\mathcal{N}^{\varphi}_0|_{F^{1/r}_0})}
    =
    \frac{
        [\overline{M}^{1/r,\varphi}_{g,\gamma^\prime}]^{\mathrm{vir}}_{\mathrm{loc}}}
      {z-\psi_{y_1}}
      \quad \text{and} \quad
    \frac{
      [F^{\varphi}_\infty]_{\mathrm{loc}}^{\mathrm{vir}}
    }
    {c_{\mathrm{top}}^{\mathbb C^*}(\mathcal{N}^{\varphi}_\infty|_{F^{1/r}_\infty})}
    =
    \frac{
      [\overline{M}^{1/r,\varphi}_{g,\gamma}]^{\mathrm{vir}}_{\mathrm{loc}}}
    {-z+\psi_{y_1}}
  \]
  where $\psi_{y_1}$ means the cotangent-line class at $y_1$ on the coarse curves.
\end{lemma}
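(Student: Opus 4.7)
The plan is to parallel the normal-bundle computation of Lemma \ref{normal_r_spin_0_infty}, upgraded to the $\varphi$-field level.

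First I would establish the underlying isomorphisms $F^{\varphi}_0 \cong \overline M^{1/r,\varphi}_{g,\gamma^\prime}$ and $F^{\varphi}_\infty \cong \overline M^{1/r,\varphi}_{g,\gamma}$. Since $\tau$ is equivariant and $F^\varphi_\star = \tau^{-1}(F^{1/r}_\star)$ by Lemma \ref{lem_fix_varphi}, these identifications follow from Lemma \ref{normal_r_spin_0_infty} once I check that the $\varphi$-fields on both sides correspond. The $\varphi$-fields are sections of $\mathcal L^{w_\alpha}(D_\alpha)$, and the transformation of the divisor $D_\alpha$ between the orbifold and non-orbifold descriptions of $y_1$ is exactly what is handled in Section \ref{other_description}, so the matching is immediate.

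Next I would extract the virtual normal bundle using the distinguished triangle (\ref{triangle}):
\[
  \tau^* \mathcal T_{\tilde M^{1/r}}[-1] \longrightarrow \mathbb E_\tau^\vee \longrightarrow \mathbb E^\vee_{\tilde M^\varphi} \overset{+1}{\longrightarrow}.
\]
The crucial point is that on $F^{1/r}_\star$ with $\star\in\{0,\infty\}$ the universal curve, the universal line bundle $\mathcal L$, the $p$-field, and the divisors $D_\alpha$ are all $\mathbb C^*$-fixed: the $\mathbb C^*$-action only scales $v_1$, which either vanishes identically (for $\star = 0$) or is scaled trivially on the underlying curve data (for $\star=\infty$). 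Hence $R\pi_*\mathcal L^{w_\alpha}(D_\alpha)|_{F^{1/r}_\star}$ carries a trivial $\mathbb C^*$-action, so $\mathbb E_\tau^\vee|_{F^\varphi_\star}$ has no moving part. The triangle above therefore identifies the moving part of $\mathbb E^\vee_{\tilde M^\varphi}|_{F^\varphi_\star}$ with $\tau^* \mathcal N_{F^{1/r}_\star/\tilde M^{1/r}}$ (concentrated in degree $0$), giving $\mathcal N^{\varphi}_\star \cong \tau^*\mathcal N_{F^{1/r}_\star/\tilde M^{1/r}}$. By Lemma \ref{normal_r_spin_0_infty} this is $T_{y_1}\otimes \mathbb C_1$ for $\star=0$ and its dual for $\star=\infty$.

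Finally, since $T_{y_1}$ is the tangent line bundle at $y_1$ on the coarse curve, $c_1(T_{y_1}) = -\psi_{y_1}$, yielding equivariant Euler classes $z - \psi_{y_1}$ and $-z + \psi_{y_1}$ respectively. It remains to verify that the fixed part of $\mathbb E^\vee_{\tilde M^\varphi}|_{F^\varphi_\star}$, together with the restricted cosection $\sigma$, assembles into the absolute perfect obstruction theory and cosection defining $[\overline M^{1/r,\varphi}_{g,\gamma^\prime}]^{\mathrm{vir}}_{\mathrm{loc}}$ (for $\star=0$) or $[\overline M^{1/r,\varphi}_{g,\gamma}]^{\mathrm{vir}}_{\mathrm{loc}}$ (for $\star=\infty$). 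This follows because on the fixed loci the construction of Section \ref{obstruction_theory_single} is reproduced verbatim by the fixed part of the triangle, and the cosection is independent of $v_1$. Dividing the resulting virtual classes by the Euler classes above gives the two claimed formulas. The main subtlety I expect is this last identification of the fixed parts with the standard absolute data, which is a bookkeeping exercise rather than a substantive obstacle.
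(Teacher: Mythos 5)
Your proposal is correct and follows essentially the same route as the paper: restrict the distinguished triangle (\ref{triangle}) to $F^{\varphi}_\star$, observe that $\mathbb E_\tau^\vee$ is entirely fixed (identifying it with the relative obstruction theory and cosection of Section \ref{obstruction_theory_single}), and read off the virtual normal bundle as the moving part of $\tau^*\mathcal T_{\tilde M^{1/r}}$, i.e.\ the normal bundle computed in Lemma \ref{normal_r_spin_0_infty}, whose equivariant Euler classes are $z-\psi_{y_1}$ and $-z+\psi_{y_1}$. The sign bookkeeping ($c_1(T_{y_1})=-\psi_{y_1}$) and the identification of fixed parts are handled exactly as in the paper's argument.
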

\begin{proof}
  For $\star=0,\infty$, we restrict the distinguished triangle (\ref{triangle})
  to ${F^{\varphi}_\star}$.
  The complex $\mathbb E_{\tau}^\vee|_{F^{\varphi}_\star}$
  only has fixed part, since the action on the $r$-spin curves is trivial
  over $F^{\varphi}_\star$. It is identified with the relative obstruction theory of
  $\overline{M}^{1/r,\varphi}_{g,\gamma^\prime}$ or
  $\overline{M}^{1/r,\varphi}_{g,\gamma}$,
  compatible with the cosection in Section \ref{obstruction_theory_single}.
  The fixed and moving parts of $\tau^*T_{\tilde M^{1/r}}|_{F^{\varphi}_\star}$
  are the pullback of the tangent and normal bundle of $F^{1/r}_\star$, respectively.
  The normal bundle of $F^{1/r}_\star$ in $\tilde M^{1/r}$ is computed in Lemma
  \ref{normal_r_spin_0_infty}.
  Hence the lemma follows.
\end{proof}

We now come to $\star=J$ for $\{1\}\subsetneqq J\subset\{1,\cdots,n\}$. Recall
that
\[
  F^{1/r}_J \cong
  \overline{M}^{1/r}_{g,\gamma_J}
  \!\!\!
  \underset{\phantom{a}B\pmb\mu_{r/r^\prime}}{\times}
  \!\!\!
  \tilde M^{1/r}_{0,\gamma_E},
\]
where $r^\prime = r/\gcd(r,k)$.
Since the
$\varphi$-fields vanish on the rational tails for degree reasons, we have the
following fibred diagram
\[
  \xymatrix{
    F^{\varphi}_J \ar[r]^{\mathrm{pr}_1^\varphi} \ar[d]&  \overline{M}^{1/r,\varphi}_{g,\gamma_J} \ar[d]^\tau\\
    F^{1/r}_J \ar[r]^{\mathrm{pr}_1} &   \overline{M}^{1/r}_{g,\gamma_J}
  }
\]

Define
\[
  \ell_{\alpha,J} = \bigg \lfloor  \sum_{j\in J}\langle
  q_\alpha(b_j-1)\rangle\bigg \rfloor \quad \text{and} \quad  k_{\alpha,J}  =
  q_\alpha + \Big\langle \sum_{j\in J} q_\alpha(b_j-1) \Big\rangle,
\]
where $\langle x \rangle=x-\left \lfloor x \right \rfloor $ denotes the
fractional part of $x$.
Define
\[
  \mu_J(z) = \prod_{\alpha=1}^s \left[ k_{\alpha,J} \right]_{\ell_{\alpha,J}} z^{1-|J|+ \sum_{\alpha} \ell_{\alpha,J}},
\]
where $[x]_n =x(x+1)\cdots(x+n-1)$.
\begin{lemma}
  \label{normal_3}
  We have
  \[
      \frac{
      [F^{\varphi}_J]_{\mathrm{loc}}^{\mathrm{vir}}
    }
    {c_{\mathrm{top}}^{\mathbb C^*}(\mathcal{N}^{\varphi}_J|_{F^{1/r}_J})}
    =
    \mathrm{pr}_1^*\left((-1)^{\Sigma_{\alpha} \ell_{\alpha,J}}
      \frac{r^\prime \mu_J(-z)
      }
      {z-\psi_{x_J}}
      \cap
      [\overline{M}_{g,\gamma_J}^{1/r,\varphi}]^{\mathrm{vir}}_{\mathrm{loc}}
    \right),
  \]
  where $\psi_{x_J}$ is the cotangent-line class at the new heavy marking $x_J$ on the coarse curves.
\end{lemma}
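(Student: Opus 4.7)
The plan is to combine the fiber-product description of $F_J^{1/r}$ from Lemma \ref{stack_F1/r_J} with its normal bundle from Proposition \ref{normal_spin}, and to carry out an explicit equivariant cohomology computation on the orbifold rational bubble $E$. First I would observe that every $\varphi$-field vanishes identically on $E$, since the only non-orbifold marking contributing to $D_\alpha|_E$ is $y_1$ and, using (\ref{k_and_l}), the coarse-curve degree of $\mathcal L^{w_\alpha}(D_\alpha)|_E$ is too negative to admit nonzero sections. Consequently $F_J^\varphi = \tau^{-1}(F_J^{1/r})$ fits into a fibered square with horizontal arrows $\mathrm{pr}_1$ and $\mathrm{pr}_1^\varphi$ onto $\overline M^{1/r,\varphi}_{g,\gamma_J}$. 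The $\mathbb C^*$-fixed part of $\mathbb E^\vee_{\tilde M^\varphi}|_{F_J^\varphi}$ is then identified with the pullback of the absolute perfect obstruction theory of $\overline M^{1/r,\varphi}_{g,\gamma_J}$, and the cosection with its counterpart from Section \ref{obstruction_theory_single}; this produces the virtual-class factor $\mathrm{pr}_1^*[\overline M^{1/r,\varphi}_{g,\gamma_J}]^{\mathrm{vir}}_{\mathrm{loc}}$.

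Next I would split the relative obstruction complex $\mathbb E_\tau^\vee|_{F_J^\varphi}$ by applying $R\pi_*$ to the normalization sequence at the node $\mathrm{nd} = C_J \cap E$:
\[
R\pi_*\mathcal L^{w_\alpha}(D_\alpha) \longrightarrow R\pi_*\big(\mathcal L^{w_\alpha}(D_\alpha)|_{C_J}\big) \oplus R\pi_*\big(\mathcal L^{w_\alpha}(D_\alpha)|_E\big) \longrightarrow \mathcal L^{w_\alpha}(D_\alpha)|_{\mathrm{nd}} \overset{+1}{\longrightarrow}.
\]
The $C_J$ summand is $\mathbb C^*$-fixed and already accounted for; the $E$ summand and the node restriction are moving. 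To compute their contribution I would parameterize $|E| \cong \mathbb P^1$ by a coordinate $z$ with $y_1$ at $0$ and $x_\infty$ at $\infty$, so that the $\mathbb C^*$-action reads $\lambda\cdot z = \lambda^{-1}z$, and enumerate monomial bases for $H^0$ and $H^1$ of each $\mathcal L^{w_\alpha}(D_\alpha)|_E$ using the monodromy data in (\ref{k_and_l}) together with the defining formulas for $k_{\alpha,J}$ and $\ell_{\alpha,J}$. Taking the product over $\alpha$ and combining with the Euler class of the moving part of $\tau^*\mathcal T_{\tilde M^{1/r}}|_{F_J^\varphi}$ must assemble into the prefactor $(-1)^{\sum_\alpha\ell_{\alpha,J}}\mu_J(-z)$.

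The moving part of $\tau^*\mathcal T_{\tilde M^{1/r}}|_{F_J^\varphi}$ is controlled by the normal bundle $\mathcal N_{\mathrm{node}}\oplus(\mathcal O\otimes\mathbb C_{-1})^{\oplus|J|-1}$ of Proposition \ref{normal_spin}. The tangent map of $\lambda\cdot z = \lambda^{-1}z$ at $x_\infty$ shows that $T_{x_\infty}$ carries $\mathbb C^*$-weight $+1$, opposite to $T_{y_1}$, so $c_1^{\mathbb C^*}(\mathcal N_{\mathrm{node}}) = (z-\psi_{x_J})/r'$ contributes $r'/(z-\psi_{x_J})$ upon inversion, while the $(|J|-1)$ copies of $\mathbb C_{-1}$ contribute $(-z)^{|J|-1}$ and absorb the residual $z$-shift from the bubble calculation.

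The main obstacle is the bubble calculation itself: one must coordinate the orbifold structure at the node (of index $r'$, with monodromy $a_\infty/r$ governed by (\ref{k_and_l})) with the $D_\alpha$-twist near $y_1$, so that the resulting ranges of monomial $z^j$-sections in $H^0$ and $H^1$ produce exactly the factor $[k_{\alpha,J}]_{\ell_{\alpha,J}}$ and the parity sign $(-1)^{\ell_{\alpha,J}}$ per $\alpha$. A secondary subtlety is that when $w_\alpha k/r \in \mathbb Z$ the node restriction $\mathcal L^{w_\alpha}(D_\alpha)|_{\mathrm{nd}}$ survives as an extra summand in the normalization triangle; one must show that this cancels uniformly against a matching contribution in $R\pi_*\mathcal L^{w_\alpha}(D_\alpha)|_E$ so that it does not perturb the final formula.
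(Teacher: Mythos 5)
Your overall strategy coincides with the paper's: restrict the distinguished triangle $\tau^*\mathcal T_{\tilde M^{1/r}}[-1]\to\mathbb E_\tau^\vee\to\mathbb E^\vee_{\tilde M^\varphi}$ to $F_J^\varphi$, identify the fixed part with the pullback of the obstruction theory of $\overline M^{1/r,\varphi}_{g,\gamma_J}$, and compute the moving part on the bubble $E$ together with the normal bundle from Proposition \ref{normal_spin}. Your weight bookkeeping for $\mathcal N_{\mathrm{node}}$ and the $(|J|-1)$ copies of $\mathbb C_{-1}$ is correct. But there are two problems.

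The first is your treatment of the node term, which is not a ``secondary subtlety'' that cancels harmlessly --- your proposed resolution is wrong. When $q_\alpha k\in\mathbb Z$ the restriction $\mathcal L^{w_\alpha}(D_\alpha)|_{\mathrm{nd}}$ is a nonzero line bundle of $\mathbb C^*$-weight $0$ (the node lies on $C_J$, where the action is trivial), so it belongs to the \emph{fixed} part of the K-theory class; it cannot cancel against $R\pi_*\big(\mathcal L^{w_\alpha}(D_\alpha)|_E\big)$, which is concentrated in $H^1$ with strictly nonzero weights $k_{\alpha,J},k_{\alpha,J}+1,\dots$ (indeed $H^0(E,\mathcal L^{w_\alpha}(D_\alpha)|_E)=0$ because the degree is negative, so the usual Gromov--Witten-style cancellation of the node term against constant sections on the bubble is unavailable here). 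The node term must instead be absorbed into the $C_J$ summand, turning $R\pi_*\big(\mathcal L^{w_\alpha}(D_\alpha)|_{C_J}\big)$ into $R\pi_*\big(\mathcal L^{w_\alpha}(D_\alpha)|_{C_J}(-x_J)\big)$; this twist is exactly what makes the fixed obstruction theory agree with that of $\overline M^{1/r,\varphi}_{g,\gamma_J}$, whose $\varphi$-fields are required to vanish at the new heavy marking $x_J$ precisely when $q_\alpha k\in\mathbb Z$. Dropping it would produce the wrong localized virtual class on the fixed locus. The paper avoids the issue entirely by splitting with the short exact sequence $0\to\mathcal L^{w_\alpha}(D_\alpha)|_{C_J}(-x_J)\to\mathcal L^{w_\alpha}(D_\alpha)\to\mathcal L^{w_\alpha}(D_\alpha)|_E\to 0$, which has no node term at all.

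The second problem is that the heart of the lemma --- showing that $R^1\pi_*\big(\mathcal L^{w_\alpha}(D_\alpha)|_E\big)\cong\mathbb C_{k_{\alpha,J}}\otimes\bigoplus_{i=0}^{\ell_{\alpha,J}-1}\mathbb C_i$, whence the factor $\big(z^{\ell_{\alpha,J}}[k_{\alpha,J}]_{\ell_{\alpha,J}}\big)^{-1}$ --- is only announced (``must assemble into the prefactor'') rather than derived. You correctly identify this as the main obstacle, but you do not carry out the Serre-duality/equivariant-degree computation that pins down the weights; without it the appearance of $[k_{\alpha,J}]_{\ell_{\alpha,J}}$ and the sign $(-1)^{\Sigma_\alpha\ell_{\alpha,J}}$ is unjustified.
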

\begin{proof}
The distinguished triangle (\ref{triangle}) restricts to a distinguished triangle
on $F_J^{\varphi}$. We will compute the fixed and moving parts of the first and
third terms.
The fixed parts will give us
$\mathrm{pr}_1^*([\overline{M}^{1/r,\varphi}_{g,\gamma_J}]_{\mathrm{loc}}^{\mathrm{vir}})
= [F^{\varphi}_J]^{\mathrm{vir}}_{\mathrm{loc}}$. To show this, by Proposition 7.5 of
\cite{behrend97_intrin_normal_cone}, it suffices to show that the fixed part of
$\mathbb E^{\vee}_{\tilde  M^\varphi}|_{F^{\varphi}_J}$ is the pulls back of the perfect obstruction
theory on  $\overline{M}^{1/r,\varphi}_{g,\gamma_J}$ defined in
Section \ref{obstruction_theory_single}, compatible with the cosections.
\footnote{Proposition 7.5 of
  \cite{behrend97_intrin_normal_cone} also works in our cosection localization setting. This is because
  $\mathrm{pr}_1$ is flat and the intrinsic normal cone pulls back to the
  intrinsic normal cone.}

We introduce some notation. Let $\mathcal  C=\mathcal  C_J\cup
\mathcal  E \to    F^{\varphi}_J$ be the pullback of the universal curve, where
$\mathcal  C_J$ is the pullback of the universal curve over
$\overline{M}^{1/r,\varphi}_{g,\gamma_J}$ and $\mathcal  E$ is the
pullback of the universal curve of $\tilde M^{1/r,\varphi}_{0,\gamma_E}$.  Let
$\mathcal  L$ be the restriction of the universal $r$-spin line bundle to $\mathcal
C$.
Let $x_i,y_j,D_\alpha$ denote the divisors on $\mathcal  C$ as in Definition
\ref{defn_master_varphi}. Recall that
\[
  D_{\alpha} = - \sum_{q_\alpha a_i \in \mathbb
    Z}x_i+\sum_{b_j\neq r}\left \lfloor
    q_\alpha (b_j-1) \right \rfloor y_j -\sum_{b_j= r}y_j.
\]

We first study the relative perfect obstruction theory
\[
  \mathbb E^\vee_{\tau}|_{F^{\varphi}_J} \cong   R\pi_*\left(
    \oplus_{\alpha=1}^s \mathcal
    L^{w_\alpha}(D_\alpha)\right).
\]
For each $\alpha$, consider the short exact sequence
\[
  0 \longrightarrow \mathcal  L^{w_\alpha}(D_\alpha)|_{\mathcal  C_J} (-x_J) \longrightarrow \mathcal  L^{w_\alpha}(D_\alpha) \longrightarrow \mathcal
  L^{w_\alpha}(D_\alpha)|_{\mathcal  E} \longrightarrow 0
\]
where the first and third terms are understood as their pushforward to $\mathcal  C$.
This induces the distinguished triangle
\[
  R\pi_*\big(\mathcal L^{w_\alpha}(D_\alpha)|_{\mathcal C_J} (-x_J)
  \big)\longrightarrow R\pi_*(\mathcal L^{w_\alpha}(D_\alpha)) \longrightarrow
  R\pi_*(\mathcal L^{w_\alpha}(D_\alpha)|_{\mathcal  E} ) \overset{+1}{\longrightarrow}.
\]

We will compute $R\pi_*(\mathcal L^{w_\alpha}(D_\alpha)|_{\mathcal  E} )$  and
show that it is in the moving part. Since $\mathcal
L^{w_\alpha}(D_\alpha)|_{\mathcal E}$ has negative fibre-wise degree, we have
\[
  R\pi_*\big(\mathcal L^{w_\alpha}(D_\alpha)|_{\mathcal E} \big) =
    R^1\pi_*\big(\mathcal L^{w_\alpha}(D_\alpha)|_{\mathcal E} \big)[-1].
  \]
  The datum $\mathcal  E$, $\mathcal  L|_{\mathcal  E}$ and so on are pulled back
  from the universal curve over $\tilde M^{1/r,\varphi}_{0,\gamma_E}\cong
B\pmb\mu_r$.
  Moreover we only care about Chow groups with rational coefficients, hence we can compute on
  a fixed smooth rational orbifold curve $E$, with $r$-spin bundle $L$.\footnote{The $\mathbb
    C^*$-action on $L$ is only well-defined up to $\pmb\mu_r$. Nevertheless we
    can compose the action with the $r$-th power map $\mathbb C^*\to \mathbb
    C^*$ so that it is well-defined.}
 Fixing $E$ and $L$ amounts to pulling back everything
  along the degree-$r$ cover $\mathrm {Spec\!~}\mathbb C \to   B\pmb\mu_{r}$.
  Let $x_\infty,y_1$ still denote the markings on $E$. Thus $E$ only has an orbifold point
  of index $r^\prime$ at $x_\infty$.  We choose the coordinate on the coarse
  moduli of $E$ such that  $y_1$ is at $0$
  and $x_\infty$ is at $\infty$. Thus the action on $E$ is given by
  (\ref{action_on_P1}).
  We define a divisor on $E$
  \[
    D_{\alpha,E} = \sum_{j\in J,b_j\neq r}\left \lfloor q_{\alpha}(b_j-1) \right
    \rfloor y_1-\sum_{j\in J,b_j =r}y_1.
  \]
  Up to the degree-$r$ cover introduced above, the line bundle $\mathcal  L|_{\mathcal  E}$ is the pullback
   of  $L$, and the divisor $D_\alpha|_{\mathcal  E}$ is the
   pullback of  $D_{\alpha,E}$.
  We have isomorphisms of equivariant line bundles
  \[
    L^{r} \cong \omega_{E}\big(x_\infty+\!\!\!\sum_{j\in J,b_j\neq r}\!\!(1-b_j)y_1+\!\!\!\sum_{j\in
      J,b_j=r}\!\!\! y_1\big), \quad \omega_E \cong \mathcal  O_E(-x_\infty -y_1).
  \]
  Hence we have
  \begin{align*}
    ((L^{w_\alpha}(D_{\alpha,E}))^{-1}\otimes& \omega_E)^r \cong \mathcal
                                                O_E\Big(-rx_\infty + r(k_{\alpha,J}+\ell_{\alpha,J} -1) y_1 \Big)\\
      \cong & \mathcal O_E\Big((r^\prime k_{\alpha,J} -1)x_\infty + (\ell_{\alpha,J}-1)y_1
                                                        \Big)^r \otimes \mathcal O_E(rk_{\alpha,J} y_1 - rr^\prime k_{\alpha,J} x_\infty
                                                        ).
  \end{align*}
  Note that $\mathcal O_E(rk_{\alpha,J} y_1 - rr^\prime k_{\alpha,J} x_\infty)$
  is the trivial bundle with the $\mathbb C^*$-action of weight $-rk_{\alpha,J}$.
  The equivariant Picard group of $E$ has no torsion. Hence we have
  \[
    \big(L^{w_\alpha}(D_{\alpha,E})\big)^{-1}\otimes \omega_E \cong \mathbb
    C_{-k_{\alpha,J}}\otimes_{\mathbb C} \mathcal  O_E\big(
   (r^\prime k_{\alpha,J} - 1)x_\infty
                                               + (\ell_{\alpha,J}-1)y_1
    \big).
  \]
  Since $0\leq  r^\prime k_{\alpha,J}-1<r^\prime$ and $x_\infty$
  is an orbifold point of index $r^\prime$, we can compute
  \[
    H^1(E,L^{w_\alpha}(D_E)) \cong \left(
      H^0(E,\big(L^{w_\alpha}(D_E)\big)^{-1}\otimes \omega_E)\right)^\vee
    \cong  \mathbb C_{k_{\alpha,J}}\otimes
    \bigg( \bigoplus_{i=0}^{\ell_{\alpha,J}-1} \mathbb C_{i} \bigg) .
  \]
  Hence it only has moving part and the equivariant top Chern class is
  \begin{equation}
    \label{moving_1}
    c_{\mathrm{top}}^{\mathbb C^*}\Big(R\pi_*\big(\mathcal
    L^{w_\alpha}(D_\alpha)|_{\mathcal E} \big)\Big)
    = \left(
      z^{\ell_{\alpha,J}}\left[k_{\alpha,J} \right]_{\ell_{\alpha,J}}
    \right)^{-1}.
  \end{equation}

  The term $R\pi_*\big(\mathcal L^{w_\alpha}(D_\alpha)|_{\mathcal C_J} (-x_J)
  \big)$ is in the fixed part, since the $\mathbb C^*$-action on $\mathcal C_J$ is
  trivial.
  We claim that it is the relative obstruction theory of
  $\overline{M}^{1/r,\varphi}_{g,\gamma_J}$ in Section \ref{obstruction_theory_single}.
  Indeed, if $\mathcal  L^{w_\alpha}$ has trivial monodromy at $x_J$ (i.e. $kq_\alpha\in
  \mathbb Z$), then by definition, the $\varphi$-fields
  of $\overline{M}^{1/r,\varphi}_{g,\gamma_J}$ are sections of $\mathcal
  L^{w_\alpha}(D_\alpha)|_{\mathcal  C_J} (-x_J)$;
  otherwise $\mathcal  L^{w_\alpha}$ has
  nontrivial monodromy at $x_J$, and thus we have
  \[
    R\pi_*\big(\mathcal L^{w_\alpha}(D_\alpha)|_{\mathcal C_J} (-x_J) \big)
    \cong R\pi_*\big(\mathcal L^{w_\alpha}(D_\alpha)|_{\mathcal C_J} \big).
  \]
  It is easy to see that the isomorphism is compatible with the cosections.

  Then we consider the $\tau^*\mathcal  T_{\tilde M}$ term of the distinguished
  triangle (\ref{triangle}). By Lemma~\ref{lem_fix_varphi}, the fixed part is $\tau^* \mathcal  T_{F^{1/r}_J}$. Moreover,
  since $\mathrm{pr}_1$ is \'etale,
  we have $\mathcal  T_{F^{1/r}_J}\cong
  \mathrm{pr}_1^*\mathcal  T_{\overline{M}^{1/r}_{g,\gamma_J}}$. This finishes the proof
  that $\mathrm{pr}_1^*([\overline{M}^{1/r,\varphi}_{g,\gamma_J}]_{\mathrm{loc}}^{\mathrm{vir}})
  = [F^{\varphi}_J]^{\mathrm{vir}}_{\mathrm{loc}}$.

  Similarly, the moving part of
  $\tau^*\mathcal  T_{\tilde M}|_{F^{\varphi}_J}$ is
  the pullback of the normal bundle of $F^{1/r}_J$ described in Proposition \ref{normal_spin}.
  Its equivariant top Chern class is
  \begin{equation}
    \label{moving_2}
    \frac{(-z)^{|J|-1}(z-\psi_{x_J})}{r^\prime}.
  \end{equation}
  Putting (\ref{moving_1}) and (\ref{moving_2}) together, we get the desired
  formula for the virtual normal bundle.
\end{proof}
\begin{remark}
  \label{rmk_twisted}
  In the case of twisted theories, the same lemma holds with  $\mu_J(z)$
  replaced by
  \[
    \mu^{\mathbb S}_J(z) = z^{1-|J|}\prod_{\alpha=1}^s
    \prod_{i=0}^{\ell_{\alpha,J}-1}((k_{\alpha,J}+i)z+w_\alpha \lambda_\alpha).
  \]
The rest of the paper works for twisted theories with this new definition .
\end{remark}
\subsection{The basic wall-crossing formula}
To describe the contribution of $F^{\varphi}_J$ in terms of weighted FJRW invariants, we define a map
\begin{equation}
  \label{beta_J}
  \beta_J:\overline{M}^{1/r}_{g,\gamma_J} \longrightarrow \overline{M}^{1/r}_{g,\gamma}.
\end{equation}
 The map replaces the last heavy marking $x_J$ by the set of light markings $\{y_j\}_{j\in J}$
 placed at the same point. More precisely, consider any family
\[\xi^\prime = (C^\prime,\pi^\prime,x^\prime_1,\cdots,x^\prime_m,x_J;(y^\prime_j)_{j\not\in
J},L^\prime,p^\prime) \in \overline{M}^{1/r}_{g,\gamma_J}(S),\] let
$\rho:C^\prime \to C$ be the partial coarse moduli only forgetting the orbifold
structure at $x_J$ and $\pi:C \to S$ be the induced projection. Set
$x_i=\rho(x_i^\prime)$ for $i=1,\cdots,m$, $y_j=\rho(y^\prime_j)$ for $j\not\in
J$ and $y_j = \rho(x_J)$ for $j\in J$. Recall that $r\ell+k=1+\sum_{j\in
J}(b_j-1)$, where $\ell\geq 0$, $1\leq k\leq r$ are integers. Let $L =
\rho_*(L^\prime)(-cy_1)$, where
\begin{equation}
  \label{c} c = \begin{cases} \ell - \big|\{j\in J:b_j =r\}\big| \cond{k\neq r} \\
    \ell - \big|\{j\in J:b_j =r\}\big|+1 \cond{k= r}\\
  \end{cases}.
\end{equation}
Then we have a natural isomorphism
\begin{align*}
  &f_*\Big((L^\prime)^{-r}\otimes \omega_{C^\prime}\big(x_J+\sum_{i=1}^m x^\prime_i +
  \sum_{\substack{j\not\in J\\b_j\neq r}}(1-b_j)y^\prime_j + \sum_{\substack{j\not\in
      J\\b_j=r}}y_j^\prime\big)\Big)\\ \cong~&
  L^{-r}\otimes \omega_{C}\big( \sum_{i=1}^m x_i +
  \sum_{b_j\neq r}(1-b_j)y_j + \sum_{b_j=r}y_j\big).
\end{align*}
Thus we get a possibly unstable family  \[\xi =
\big(C,\pi,x_1,\cdots,x_m;y_1,\cdots,y_n,L,f_*(p^\prime)\big).\]
The family $\beta_J(\xi^\prime)$ is obtained by stabilizing $\xi$.

From the concrete description of the maps, we have a commutative diagram
\[
  \xymatrix{
    F^{1/r}_{J}  \ar@{^{(}->}[r]\ar[d]_{\mathrm{pr}_1} & \tilde M^{1/r}_{g,\gamma}\ar[d]^{\mathrm{st}} \\
    \overline{M}^{1/r}_{g,\gamma_J} \ar[r]^{\beta_J} &\overline M^{1/r}_{g,\gamma}\\
  }
\]
Now recall the constant $\epsilon_{\gamma}$ defined in (\ref{defn_invariants}).
Let $d_1,\cdots,d_n$ be any non-negative integers. We use the short-hand
notation $\Sigma d_J$ for $\sum_{j\in J}d_j$.
 We use the projection formula to put Proposition \ref{prop_vanishing},
Lemma \ref{normal_12} and Lemma \ref{normal_3} together:
\begin{corollary}
  \label{cor_recursive}
  \begin{align*}
  & \prod_{j=1}^n \psi_{y_j}^{d_j}
  \cap
    \epsilon_{\gamma}[\overline{M}^{1/r,\varphi}_{g,\gamma}]^{\mathrm{vir}}_{\mathrm{loc}}
-
 \mathrm{st}_*\bigg(\prod_{j=1}^n \psi_{y_j}^{d_j}
    \cap
    \epsilon_{\gamma^\prime}[\overline{M}^{1/r,\varphi}_{g,\gamma^\prime}]^{\mathrm{vir}}_{\mathrm{loc}}
  \bigg)\\
  =& \sum_J\beta_{J*}\bigg(
    \Big[z^{\Sigma d_J}\mu_J(-z) \Big]_{+}\Big|_{z=\psi_J}\prod_{j\not\in J}^n
     \psi_{y_j}^{d_j}\cap \epsilon_{\gamma_J}
     [ \overline{M}_{g,\gamma_J}^{1/r,\varphi} ]^{\mathrm{vir}}_{\mathrm{loc}}
  \bigg),
  \end{align*}
where  the summation is over all $J\subset\{1,\cdots,n\}$ such that
$\{1\}\subsetneqq J$; the $\psi_{y_j}$ are the cotangent-line classes on the coarse curves.
\end{corollary}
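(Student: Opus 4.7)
The plan is to apply Proposition \ref{prop_vanishing} with $\alpha = \prod_{j=1}^n\psi_{y_j}^{d_j}$ and extract the coefficient of $z^{-1}$, which must vanish. Summing the contributions from the three types of fixed components $F^{1/r}_\star$ ($\star=0,\infty,J$) computed via Lemmas \ref{normal_12} and \ref{normal_3}, rearranging, and multiplying by $\epsilon_\gamma$ will yield the asserted identity.

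For $\star=0,\infty$, Lemma \ref{normal_12} produces rational expressions of the form $[\overline M^{1/r,\varphi}_{g,\gamma'}]^{\mathrm{vir}}_{\mathrm{loc}}/(z-\psi_{y_1})$ and $[\overline M^{1/r,\varphi}_{g,\gamma}]^{\mathrm{vir}}_{\mathrm{loc}}/(-z+\psi_{y_1})$. Expanding $1/(z-\psi_{y_1})$ as a Laurent series in $z^{-1}$ (valid by nilpotence of $\psi_{y_1}$) and capping with $\alpha$, the $z^{-1}$-coefficients give respectively $\prod_j\psi_{y_j}^{d_j}\cap[\overline M^{1/r,\varphi}_{g,\gamma'}]^{\mathrm{vir}}_{\mathrm{loc}}$ pushed forward by $\mathrm{st}$, and $-\prod_j\psi_{y_j}^{d_j}\cap[\overline M^{1/r,\varphi}_{g,\gamma}]^{\mathrm{vir}}_{\mathrm{loc}}$ (using that $\mathrm{st}$ restricts to the identity on $F^{1/r}_\infty\cong\overline M^{1/r}_{g,\gamma}$). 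The identity $\epsilon_{\gamma'}=\epsilon_\gamma$ holds because moving $b_1$ from a light to a heavy marking leaves $\sum_\alpha\sum_j\langle q_\alpha(b_j-1)\rangle$ unchanged, so these pieces already match the left-hand side of the corollary after rearrangement.

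For $F^{1/r}_J$, the key geometric input is that on the rational tail $E$ all markings $\{y_j\}_{j\in J}$ coincide with $y_1$, and the cotangent line to $|E|$ at $y_1$ carries $\mathbb C^*$-weight $+1$ (coming from $\lambda\cdot z = \lambda^{-1}z$ of (\ref{action_on_P1}), which pulls $dz$ back by $\lambda^{-1}$). Hence $\imath_J^*\alpha = z^{\Sigma d_J}\prod_{j\not\in J}\psi_{y_j}^{d_j}$. Combining with Lemma \ref{normal_3} and noting that $\mu_J(z)$ is a single monomial, $z^{\Sigma d_J}\mu_J(-z) = (-1)^{1-|J|+\sum_\alpha\ell_{\alpha,J}}\prod_\alpha[k_{\alpha,J}]_{\ell_{\alpha,J}}\,z^{M_J}$ where $M_J:=\Sigma d_J + 1-|J|+\sum_\alpha\ell_{\alpha,J}$. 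Expanding $1/(z-\psi_{x_J})$ in $z^{-1}$, the $z^{-1}$-coefficient of $z^{\Sigma d_J}\mu_J(-z)/(z-\psi_{x_J})$ equals $(-1)^{1-|J|+\sum\ell}\prod_\alpha[k_{\alpha,J}]_{\ell_{\alpha,J}}\psi_{x_J}^{M_J}$ when $M_J\geq 0$ and vanishes otherwise, which is precisely $[z^{\Sigma d_J}\mu_J(-z)]_+\big|_{z=\psi_{x_J}}$.

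The pushforward $\mathrm{st}_*\circ\imath_{J*} = \beta_{J*}\circ\mathrm{pr}_{1*}$ follows from the commutative square stated just before the corollary; here $\mathrm{pr}_1:F^{1/r}_J\to \overline M^{1/r}_{g,\gamma_J}$ is a $B\pmb\mu_{r'}$-gerbe, since $\tilde M^{1/r}_{0,\gamma_E}\cong B\pmb\mu_r$ and $B\pmb\mu_r\to B\pmb\mu_{r/r'}$ is a $B\pmb\mu_{r'}$-gerbe, so $\mathrm{pr}_{1*}\mathrm{pr}_1^* = 1/r'$, canceling the factor $r'$ in Lemma \ref{normal_3}. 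To absorb $\epsilon_\gamma$ into $\epsilon_{\gamma_J}$ one uses $\epsilon_\gamma/\epsilon_{\gamma_J} = (-1)^{\sum_\alpha\ell_{\alpha,J}}$, which follows from $\sum_{j\in J}\langle q_\alpha(b_j-1)\rangle - \langle q_\alpha(k-1)\rangle = \ell_{\alpha,J}$ (valid since $k-1\equiv \sum_{j\in J}(b_j-1)\bmod r$); this combines with the sign $(-1)^{\sum_\alpha \ell_{\alpha,J}}$ in Lemma \ref{normal_3} to produce precisely the factor $\epsilon_{\gamma_J}[z^{\Sigma d_J}\mu_J(-z)]_+|_{z=\psi_J}$ of the corollary. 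The main technical obstacle is coordinating four independent sign sources — the equivariant weight on $\psi_{y_j}|_{F^{1/r}_J}$ for $j\in J$, the $(-1)^{\sum\ell}$ in Lemma \ref{normal_3}, the $\epsilon$-ratio, and the sign inside $\mu_J(-z)$ — so that they cancel cleanly into the compact form written in the statement.
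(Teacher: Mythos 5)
Your proposal is correct and follows the same route as the paper's own (much terser) proof: apply Proposition \ref{prop_vanishing} with $\alpha=\prod_j\psi_{y_j}^{d_j}$, extract the $z^{-1}$-coefficient, use that $\psi_{y_j}$ restricts to $z$ on $F^{1/r}_J$ for $j\in J$, cancel the factor $r'$ against the degree $1/r'$ of $\mathrm{pr}_1$, and absorb the signs via $\epsilon_\gamma=\epsilon_{\gamma'}=(-1)^{\Sigma_\alpha\ell_{\alpha,J}}\epsilon_{\gamma_J}$. Your verification of the $\epsilon$-ratio via $\sum_{j\in J}\langle q_\alpha(b_j-1)\rangle-\langle q_\alpha(k-1)\rangle=\ell_{\alpha,J}$ and of the weight of $\psi_{y_1}|_{F_J^{1/r}}$ are details the paper leaves implicit, and both check out.
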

\begin{proof}
  The equivariant $\psi$-classes on the master space $\tilde{M}^{1/r}_{g,\gamma}$ restrict
  to the non-equivariant $\psi$-classes on the fixed-point components, except for the
  following cases: for each $j\in J$, the class $\psi_{y_j}$ on
  $\tilde{M}^{1/r}_{g,\gamma}$
  restricts to $z$ on $F^{1/r}_J$. The corollary then follows from taking
    $\alpha=\prod_{j=1}^n \psi_{y_j}^{d_j}$,
  then taking the coefficient of $z^{-1}$-term in (\ref{relation_general}).
  Note that the map $\mathrm{pr}_1$ is finite flat of degree $1/r^\prime$. This cancels with
  the factor $r^\prime$ in Lemma \ref{normal_3}.
  Note that we have $\epsilon_\gamma=\epsilon_{\gamma^\prime} =
  (-1)^{\Sigma_\alpha \ell_{\alpha,J}}\epsilon_{\gamma_J}$.
\end{proof}
We now rewrite the lemma for later use.
We make the convention that when $J=\{1\}$, we have
$
\gamma_J =\gamma^{\prime},\beta_J=\mathrm{st},\mu_J(z) = 1,$ and $x_J=y_1,
$
then Corollary \ref{cor_recursive} can be rewritten as
\begin{equation}
  \label{cor_recursive_2}
  \prod_{j=1}^n \psi_{y_j}^{d_j} \cap
  \epsilon_{\gamma}[\overline{M}^{1/r,\varphi}_{g,\gamma} ]^{\mathrm{vir}}_{\mathrm{loc}}
  = \sum_J\beta_{J*}\bigg(
  \Big[z^{\Sigma d_J}\mu_J(-z) \Big]_{+}\Big|_{z=\psi_J}\prod_{j\not\in J}^n
    \psi_{y_j}^{d_j}\cap
    \epsilon_{\gamma_J}
    [\overline{M}_{g,\gamma_J}^{1/r,\varphi}]^{\mathrm{vir}}_{\mathrm{loc}}
  \bigg),
\end{equation}
where the summation is over all $1\in J\subset \{1,\cdots,n\}$.

\section{The wall-crossing formulas}
\label{wall_crossing_formulas}
\subsection{The Chow version}
In this subsection, we consider
\[
  \gamma^- =
  \big(\frac{a_1}{r},\cdots,\frac{a_m}{r}\big|\frac{b_1}{r},\cdots,\frac{b_h}{r},\frac{b_{h+1}}{r},\cdots,\frac{b_n}{r}\big)
\]
and
\[
  \gamma^+ =
  \big(\frac{a_1}{r},\cdots,\frac{a_m}{r},\frac{b_1}{r},\cdots,\frac{b_h}{r}\big |\frac{b_{h+1}}{r},\cdots,\frac{b_n}{r}\big).
\]
Assume that $h\neq 0$, $2g-2+m\geq 0$ and $a_i,b_j\in \{1,\cdots,r\}$ as before.
In both situations
we denote the markings by
\[
  (x_1,\cdots,x_m,y_1,\cdots,y_{h},y_{h+1},\cdots,y_n).
\]
As before we have a stabilization map
\[
  \mathrm{st}: \overline M_{g,\gamma^{+}}^{1/r} \longrightarrow \overline M_{g,\gamma^-}^{1/r}.
\]
Moreover we can generalize $\beta_J$ as follows. For any collection $\mathcal J\subset 2^{\{1,\cdots,n\}}$ of disjoint subsets of
$\{1,\cdots,n\}$, we denote $\bigcup_{J\in \mathcal  J}J$ by $\cup \mathcal J$,
and define
\[
  \gamma_{\mathcal J} = \big(
  \frac{a_1}{r},\cdots,\frac{a_m}{r}, (\frac{k_J}{r})_{J\in \mathcal  J} \big |
  (\frac{b_j}{r})_{j\not\in \cup \mathcal  J}
  \big),
\]
where $k_J$ is the integer such that
\[
  1\leq k_J\leq r \quad \text{and} \quad k_J-1\equiv\sum_{j\in J}(b_j-1)\mod r.
\]
We denote the $J$-th new heavy marking by $x_J$
and form a map
\[
  \beta_{\mathcal J}: \overline M^{1/r}_{g,\gamma_{\mathcal J}} \longrightarrow \overline M^{1/r}_{g,\gamma^-}.
\]
The definition of the map $\beta_{\mathcal  J}$ is similar to that of $\beta_J$ in (\ref{beta_J}).
For each $J\in \mathcal  J$,
we replace the heavy marking $x_J$ by the light markings $\{y_j\}_{j\in J}$.
Then we modify the line bundle as in (\ref{beta_J}), successively for each $J\in \mathcal  J$.
Finally we stabilize the family by contracting all unstable rational subcurves.

For various $\mathcal  J$, the maps $\beta_{\mathcal J}$  are compatible with each
other in the following sense. Suppose $\mathcal  J$ is the disjoint union of
$\mathcal  J_1$ and $\mathcal J_2$. If we view $\mathcal  J_1$ as a collection of
pairwise disjoint subsets of $\{1,\cdots,n\}\backslash \cup \mathcal  J_2$,
then we can compose $\beta_{\mathcal J_2}$ and $\beta_{\mathcal J_1}$. It is easy
to see that $\beta_{\mathcal J} =\beta_{\mathcal J_1}\circ \beta_{\mathcal J_2}$. Morover, when
$\mathcal  J=\{J\}$, we have $\beta_{\mathcal J} = \beta_{J}$. Hence $\beta_{\mathcal J}$ is
the composition of a sequence of $\beta_{J}$.

We now have the wall-crossing formula relating $\gamma^+$  and $\gamma^-$.
  For any integers $d_1,\cdots,d_n\geq 0$, recall that we write $\Sigma d_J = \sum_{j\in
    J} d_j$.
\begin{theorem}[Main Theorem]
  \label{thm_main}
  \begin{align*}
    \prod_{j=1}^{n}\psi_{y_j}^{d_j}\cap \epsilon_{\gamma^-}& [\overline
    M_{g,\gamma^-}^{1/r,\varphi} ]^{\mathrm{vir}}_{\mathrm{loc}} \\
     =&
       \sum_{\mathcal J}
    \beta_{\mathcal J*}\bigg(
    \prod_{j\not\in \cup \mathcal  J} \psi_{y_j}^{d_j}
        \prod_{J\in \mathcal J}
        \Big[ z^{\Sigma d_J}\mu_J(-z) \Big]_{+}\Big|_{z=\psi_J}
         \cap
    \epsilon_{\gamma_{\mathcal  J}}[\overline M^{1/r,\varphi}_{g,\gamma_{\mathcal
    J}}]^{\mathrm{vir}}_{\mathrm{loc}}\bigg),
  \end{align*}
    where $\mathcal J$ runs over all subsets of the power set
    $2^{\{1,\cdots,n\}}$ such that
    \begin{enumerate}
    \item the sets $J\in \mathcal  J$ are disjoint from each other,
    \item $\{1,\cdots,h\}\subset \cup \mathcal  J$,
    \item for each $J\in \mathcal J$, $J \cap \{1,\cdots,h\}\neq \varnothing$.
    \end{enumerate}
    \end{theorem}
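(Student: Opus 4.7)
The plan is to prove Theorem \ref{thm_main} by induction on $h$, the number of light markings of $\gamma^-$ that are promoted to heavy markings in $\gamma^+$. The base case $h=1$ is essentially the rewritten basic wall-crossing formula (\ref{cor_recursive_2}): when $h=1$, condition (3) forces $1 \in J$ for every $J \in \mathcal{J}$, and since the sets in $\mathcal{J}$ are pairwise disjoint, $\mathcal{J}$ must be a single set $\{J\}$ with $1 \in J$. Thus the sum collapses to exactly the sum in (\ref{cor_recursive_2}).

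For the inductive step, assume the theorem for all smaller values of $h$. First I would apply (\ref{cor_recursive_2}) once to $\gamma^-$ to rewrite the left-hand side as a sum over subsets $J_0 \subset \{1,\dots,n\}$ with $1 \in J_0$ of $\beta_{J_0}$-pushforwards of classes on $\overline{M}^{1/r,\varphi}_{g,\gamma_{J_0}}$, namely
\[
  \Big[ z^{\Sigma d_{J_0}}\mu_{J_0}(-z) \Big]_{+}\Big|_{z=\psi_{J_0}}\prod_{j\notin J_0} \psi_{y_j}^{d_j}\cap \epsilon_{\gamma_{J_0}} [\overline{M}^{1/r,\varphi}_{g,\gamma_{J_0}}]^{\mathrm{vir}}_{\mathrm{loc}}.
\]
In the datum $\gamma_{J_0}$, the markings $y_j$ for $j \in \{2,\dots,h\}\setminus J_0$ remain light and still have to be promoted to heavy. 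Since $1 \in J_0$, the cardinality $|\{2,\dots,h\}\setminus J_0|$ is at most $h-1$, so after relabeling light markings (using the symmetry of (\ref{cor_recursive_2}) in the choice of distinguished light marking) the inductive hypothesis applies to each fixed $\gamma_{J_0}$. Each such application produces a sum over collections $\mathcal{J}'$ of pairwise disjoint subsets of $\{2,\dots,n\}\setminus J_0$ that cover $\{2,\dots,h\}\setminus J_0$ and satisfy the analogue of condition (3), with $\beta_{\mathcal{J}'}$-pushforwards of products of $[z^{\Sigma d_J}\mu_J(-z)]_+|_{z=\psi_J}$ for $J\in\mathcal{J}'$.

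The two sums are then assembled using the compatibility $\beta_{\{J_0\}\sqcup\mathcal{J}'} = \beta_{J_0}\circ \beta_{\mathcal{J}'}$ noted in the construction of the $\beta_{\mathcal{J}}$. The resulting double sum over pairs $(J_0,\mathcal{J}')$ reindexes as a single sum over collections $\mathcal{J} = \{J_0\}\sqcup\mathcal{J}'$ of pairwise disjoint subsets of $\{1,\dots,n\}$ satisfying conditions (1)--(3) of the theorem, and the $\mu_J$-factors (and the overall sign $\epsilon_{\gamma_{\mathcal{J}}}$) naturally collect into the form stated, since each $J$ contributes its factor from the single application of the basic formula that introduced it.

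The main technical obstacle is the bookkeeping of the $\psi$-class $\psi_{J_0}$ when the inductive hypothesis is applied on $\overline{M}^{1/r}_{g,\gamma_{J_0}}$: I need the class $\psi_{J_0}$ at the heavy marking $x_{J_0}$ to commute correctly with the further pushforwards $\beta_{\mathcal{J}'*}$. This is a projection-formula argument that rests on the fact that $\beta_{\mathcal{J}'}$ only modifies the other $y_j$'s and does not affect the marking $x_{J_0}$, so $\psi_{J_0}$ pulls back to $\psi_{J_0}$ on the refined moduli space. Verifying that the constants $\epsilon_\gamma$ combine multiplicatively through the iteration (a sign computation using $\epsilon_{\gamma^-}=\epsilon_{\gamma_{J_0}}\cdot(-1)^{\Sigma_\alpha\ell_{\alpha,J_0}}$ and the analogous identity applied to $\gamma_{J_0}$) is the final routine check needed to match the statement.
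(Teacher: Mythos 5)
Your proposal is correct and follows essentially the same route as the paper: induction on $h$ with the basic wall-crossing formula (\ref{cor_recursive_2}) as both the base case and the inductive engine, with the terms assembled via the compatibility $\beta_{\mathcal J}=\beta_{\mathcal J_1}\circ\beta_{\mathcal J_2}$ and the identity $\epsilon_{\gamma^-}=(-1)^{\Sigma_\alpha\ell_{\alpha,J}}\epsilon_{\gamma_J}$. The only (immaterial) difference is the order of composition --- the paper applies the inductive hypothesis first (promoting $y_1,\dots,y_{h-1}$) and then one more application of (\ref{cor_recursive_2}) to promote $y_h$, whereas you promote $y_1$ first and then invoke the inductive hypothesis on the remaining markings --- and your treatment of the heavy-marking $\psi$-classes under the further pushforwards is the same compatibility the paper uses in Corollary \ref{cor_invariants}.
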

\begin{proof}
  We prove this by induction on $h$. We
  will suppress the pushforward notation and everything will be pushed
  forward to $\overline M_{g,\gamma^-}^{1/r} $.

The case $h=1$ is Corollary \ref{cor_recursive}, reformulated as
(\ref{cor_recursive_2}). We now assume that $h>1$ and the theorem is
true for the wall-crossing between
$\gamma^-$ and
\[
  \gamma_0 =
\big(\frac{a_1}{r},\cdots,\frac{a_m}{r},\frac{b_1}{r},\cdots,\frac{b_{h-1}}{r}\big
|\frac{b_h}{r},\frac{b_{h+1}}{r},\cdots,\frac{b_n}{r}\big).
\]
This gives
  \begin{equation}
    \label{induction}
  \begin{aligned}
    \prod_{j=1}^{n}\psi_{y_j}^{d_j}\cap \epsilon_{\gamma^-}[\overline
    M_{g,\gamma^-}^{1/r,\varphi} ]^{\mathrm{vir}}_{\mathrm{loc}}
     =
       \sum_{\mathcal J_1}
        \prod_{j\not\in \cup \mathcal  J_1} \psi_{y_j}^{d_j}
        \prod_{J\in \mathcal J_1}
        \Big[ z^{\Sigma d_J}\mu_J(-z) \Big]_{+}\Big|_{z=\psi_J}
       \cap
    \epsilon_{\gamma_{\mathcal J_1}}[\overline
    M^{1/r,\varphi}_{g,\gamma_{\mathcal J_1}}]^{\mathrm{vir}}_{\mathrm{loc}},
  \end{aligned}
   \end{equation}
   where $\mathcal  J_1\subset 2^{\{1,\cdots,n\}}$ runs over all
   collections of pairwise disjoint subsets such that
  \[
    \{1,\cdots,h-1\}\subset \cup \mathcal  J_1 \quad \text{and} \quad J\cap
    \{1,\cdots,h-1\}\neq \varnothing,~\forall J\in \mathcal  J_1 .
  \]

  For each $\mathcal  J_1$ such that $h\not\in \cup \mathcal  J_1$, $\overline
  M^{1/r}_{g,\gamma_{\mathcal J_1}}$ has $y_h$ as a light marking. We can apply
  (\ref{cor_recursive_2}) to replace it with a heavy marking. This gives
  \begin{align*}
    & \prod_{j\not\in \cup \mathcal  J_1} \psi_{y_j}^{d_j}
    \prod_{J\in \mathcal J_1}\bigg(
        \Big[z^{\Sigma d_J}\mu_J(-z) \Big]_{+}\Big|_{z=\psi_J}
    \bigg) \cap
    \epsilon_{\gamma_{\mathcal  J_1}}[\overline M^{1/r,\varphi}_{g,\gamma_{\mathcal
          J_1}}]^{\mathrm{vir}}_{\mathrm{loc}} \\
    = &\sum_{J_2}
      \prod_{\substack{j\not\in \cup \mathcal
          J_1\\
          j\not\in J_2}} \psi_{y_j}^{d_j}
        \Big[  z^{\Sigma d_{J_2}}\mu_{J_2}(-z) \Big]_{+}\Big|_{z=\psi_{J_2}}
      \prod_{J\in \mathcal J_1}
        \Big[ z^{\Sigma d_J}\mu_J(-z) \Big]_{+}\Big|_{z=\psi_{J}}
       \cap
    \epsilon_{\gamma_{\mathcal J_2}}[\overline
    M^{1/r,\varphi}_{g,\gamma_{\mathcal J_2}}]^{\mathrm{vir}}_{\mathrm{loc}},
  \end{align*}
  where
 $\mathcal  J_2 = \mathcal  J_1 \cup\{J_2\}$ and $J_2$ runs over all subsets of $\{1,\cdots,n\}\backslash(\cup
  \mathcal  J_1)$ such that $h\in J_2$.
  Since $\mathcal  J_1$
  and $\mathcal  J_2$ exactly run over all $\mathcal  J$ in the statement of the
  theorem,  the proof is complete.
\end{proof}

\subsection{The invariants and potentials}
In this subsection we prove the wall-crossing formula for the generating
functions of weighted FJRW invariants.
Let $2g-2+m\geq 0$, $(2g-2+m,n)\neq (0,0)$ as before and consider
\[
  \gamma^{-} = \big(\frac{a_1}{r},\cdots,\frac{a_m}{r}\big|\frac{b_1}{r},\cdots,\frac{b_n}{r}\big)
  \quad \text{and} \quad
  \gamma^{+} = \big(\frac{a_1}{r},\cdots,\frac{a_m}{r},\frac{b_1}{r},\cdots,\frac{b_n}{r} \big|~\varnothing~\big).
\]
Theorem \ref{thm_main} implies
\begin{corollary}
  \label{cor_invariants}
\begin{align*}
  \big\langle \psi^{c_1} \phi_{a_1},\cdots, \psi^{c_m}\phi_{a_m}|&
    \psi^{d_1}\phi_{b_1},\cdots,\psi^{d_n}\phi_{b_n} \big\rangle^{0}_{g,m|n}  \\
  =  \sum_{h=1}^\infty \frac{1}{h!}\!\!\!
     \sum_{J_1,\cdots,J_h}\!\!\!\!
     \big\langle \psi^{c_1}
     \phi_{a_1},&\!\cdots\!, \psi^{c_m}\phi_{a_m},\\
          \Big[ z^{\Sigma d_{J_1}} &\mu_{J_1}(-z) \Big]_{+}\Big|_{z=\psi}\phi_{k_{J_1}}
     ,\!\cdots\!,
        \Big[  z^{\Sigma d_{J_h}}\mu_{J_h}(-z) \Big]_{+}\Big|_{z=\psi}
     \phi_{k_{J_h}}
     \big\rangle^{\infty}_{g,m+h},
\end{align*}
where the sequence $(J_1,\cdots,J_h)$ runs over all  length-$h$ partitions of $\{1,\cdots,n\}$.
\end{corollary}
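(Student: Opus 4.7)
The plan is to specialize Theorem~\ref{thm_main} to the case $h = n$ (so that every light marking gets converted into a heavy marking) and then integrate both sides against $\prod_{i=1}^{m} \psi_{x_i}^{c_i}$.

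When $h = n$, conditions (2) and (3) on $\mathcal{J}$ in Theorem~\ref{thm_main} together force $\cup \mathcal{J} = \{1,\ldots,n\}$, so $\mathcal{J}$ runs over unordered set-partitions of $\{1,\ldots,n\}$. For each such $\mathcal{J}$, the discrete data $\gamma_{\mathcal{J}}$ has $m + |\mathcal{J}|$ heavy markings and no light markings, hence $\overline M^{1/r}_{g,\gamma_{\mathcal{J}}}$ is the moduli for an $\infty$-FJRW invariant with $m+|\mathcal{J}|$ insertions.

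Now cap both sides of Theorem~\ref{thm_main} (taken with $\gamma^{\pm}$ as in the corollary) with $\prod_{i=1}^{m} \psi_{x_i}^{c_i}$ and push forward to a point. On the left-hand side, comparing with the definition (\ref{defn_invariants}) gives exactly
\[
\langle \psi^{c_1}\phi_{a_1},\ldots,\psi^{c_m}\phi_{a_m} \mid \psi^{d_1}\phi_{b_1},\ldots,\psi^{d_n}\phi_{b_n}\rangle^{0}_{g,m|n}.
\]
On the right-hand side, apply the projection formula to each $\beta_{\mathcal{J}}$. The only rational subcurves that $\beta_{\mathcal{J}}$ contracts lie near the new heavy markings $x_J$, so $\beta_{\mathcal{J}}^{*}\psi_{x_i} = \psi_{x_i}$ for $i = 1,\ldots,m$. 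Under the identification $z = \psi_{x_J}$, the factor $[z^{\Sigma d_J}\mu_J(-z)]_{+}$ becomes a descendant insertion at $x_J$, and the term indexed by $\mathcal{J}$ becomes an $\infty$-FJRW invariant on $\overline M^{1/r}_{g,\gamma_{\mathcal{J}}}$ with the insertions listed in the corollary; the sign $\epsilon_{\gamma_{\mathcal{J}}}$ appearing in Theorem~\ref{thm_main} is precisely the one built into the $\infty$-invariant.

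Finally, one converts unordered partitions to ordered tuples. The $\infty$-FJRW invariants are symmetric in their (heavy) insertions, and both the insertion $[z^{\Sigma d_J}\mu_J(-z)]_{+}|_{z=\psi}\phi_{k_J}$ and the sign $\epsilon_{\gamma_{\mathcal{J}}}$ depend only on the collection $\mathcal{J}$, not on an ordering of its elements. Hence each unordered partition with $h$ blocks contributes identically under each of its $h!$ orderings, so summing over ordered length-$h$ partitions and dividing by $h!$ reproduces the sum in Theorem~\ref{thm_main}. The only mildly delicate point is the compatibility of $\psi$-classes with $\beta_{\mathcal{J}}$, which is immediate since $\beta_{\mathcal{J}}$ acts trivially on the coarse curve away from the new heavy markings.
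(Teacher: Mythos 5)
Your proposal is correct and follows essentially the same route as the paper: specialize Theorem~\ref{thm_main} to the case where all $n$ light markings are converted to heavy ones (so conditions (2) and (3) force $\mathcal J$ to be a set-partition of $\{1,\dots,n\}$), cap with $\prod_i\psi_{x_i}^{c_i}$, take degrees using the definition~(\ref{defn_invariants}) and the compatibility of heavy-marking $\psi$-classes with $\beta_{\mathcal J}$, and pass from unordered partitions to ordered tuples to produce the $1/h!$. The paper's own proof is just a terser version of exactly these steps.
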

\begin{proof}
  The $\psi$-classes at the have markings pullback along the maps $\beta_{\mathcal
    J}$. The corollary is proved by capping both sides of the equation in
  Theorem \ref{thm_main} by $\psi_{x_1}^{c_1}\cdots\psi_{x_m}^{c_m}$ and them
  taking the degrees of the classes on both sides. In Theorem \ref{thm_main}, the $\mathcal
  J$ is a set;  while here $(J_1,\cdots,J_h)$ is a sequence.  This accounts for
  the factor $1/h!$.
\end{proof}

We can package these formulas into generating functions. We  only consider
primary insertions at the light markings.
Recall that we have defined
\[
  \mathcal F^{0}_{g}(\mathbf u,\mathbf t) = \sum_{m,n\geq 0}
  \frac{1}{m!n!}\langle \mathbf u^m| \mathbf t^n
  \rangle_{g,m|n}^{0} \quad \text{and} \quad \mathcal F^{\infty}_{g}(\mathbf u)
  =\mathcal F^{0}_{g}(\mathbf u,0),
\]
where
\[
  \mathbf u = u_0 + u_1 \psi + u_2\psi^2 + \cdots \quad\! \text{and}\! \quad  \mathbf t = \sum_{j=1}^r t_j\phi_j,\quad\text{where
  } u_i=\sum_{j=1}^r u_{ij}\phi_j.
\]
The unstable terms are defined to be zero.

We have also defined
\[
  \mu(\mathbf t,z) = \sum_{n=1}^\infty \sum_{B_n}\frac{t_{b_1}\cdots t_{b_n}}{n!}
  \mu^+_{B_n}(z) \phi_{k_{B_n}}.
\]
Some explanation of the notation is in order. When we have fixed a
sequence of
positive integers $B=(b_1,\cdots,b_n),b_j\in \{1,\cdots,r\}$ and an index set $J\subset
\{1,\cdots,n\}$,
we define $k_J$ and $\ell_J$
to be the integers such that $\ell_J\geq
0,1\leq k_J\leq r$ and $r\ell_J+k_J = 1+\sum_{j\in J}(b_j-1)$. When we simply
write $k_B$ and $\ell_B$, we mean $k_J$,$\ell_J$ with $J=\{1,\cdots,n\}$. The
same rule applies to $\mu_J(z)$ and $\mu_B(z)$.

Now we are ready to prove the numerical wall-crossing formula  (Theorem \ref{thm_higher_genus_potential})
\begin{equation}
  \label{num_wall_crossing}
  \mathcal F^{0}_{g}(\mathbf u,\mathbf t) = \mathcal
  F^\infty_g(\mathbf u+\mu^+(\mathbf t,-\psi)).
\end{equation}
When $g=0$, this is only true modulo linear terms in the $\{u_{ij}\}$.
\begin{proof}[Proof of Theorem \ref{thm_higher_genus_potential}]

  We fix some $n\geq 0$ and $b_1,\cdots,b_n$ in $\{1,\cdots,r\}$.
  We compare the coefficients of $t_{b_1}\cdots t_{b_n}$ on both sides of
  (\ref{num_wall_crossing}).
  For each $m\geq 0$, we look at the $\{u_{ij}\}$-degree-$m$ part.
  Assume $m\geq 2$ when $g=0$ and $m\geq 1$ when $g=1$. The left hand side
  of (\ref{num_wall_crossing}) gives
  \[
    \frac{1}{\phantom{\!\!a}\big|\mathrm{Aut}(b_1,\cdots,b_n)\big|\phantom{\!\!a}} \langle
    \mathbf u^m
    | \phi_{b_1},\cdots,\phi_{b_n} \rangle_{g,m|n}^{0},
  \]
  where $\mathrm{Aut}(b_1,\cdots,b_n)$ is the group of permutations of
  $\{1,\cdots,n\}$ that fix $(b_1,\cdots,b_n)$.
  By Corollary \ref{cor_invariants}, this equals to
  \begin{equation}
    \begin{aligned}
    &\frac{1}{\phantom{\!\!a}\big|\mathrm{Aut}(b_1,\cdots,b_n)\big|\phantom{\!\!a}}
    \sum_{h=1}^\infty \frac{1}{h!}\sum_{J_1,\cdots,J_h}\big\langle \mathbf u^m,
    \mu^+_{J_1}(-\psi)\phi_{k_{J_1}},\cdots,\mu^+_{J_h}(-\psi)\phi_{k_{J_h}}
    \big\rangle_{g,m+h}^{\infty} \\
    =&
     \frac{1}{\phantom{\!\!a}\big|\mathrm{Aut}(b_1,\cdots,b_n)\big|\phantom{\!\!a}}
     \sum_{h=1}^\infty \frac{1}{h!}\sum_{J^\prime_1,\cdots,J^\prime_h}\Big\langle
       \mathbf u^m,
     \frac{\mu^+_{J^\prime_1}(-\psi)\phi_{k_{J^\prime_1}}}{|J^\prime_1|!},\cdots,\frac{\mu^+_{J^\prime_h}(-\psi)\phi_{k_{J^\prime_h}}}{|J^\prime_h|!}
    \Big\rangle_{g,m+h}^{\infty}
    \end{aligned}
  \end{equation}
  where $(J_1,\cdots,J_h)$ runs over all partitions of $\{1,\cdots,n\}$ of
  length $h$. The $(J^\prime_1,\cdots,J^\prime_h)$ also
  runs over all partitions of $\{1,\cdots,n\}$ of length $h$ but each $J^\prime_i$ is viewed as
  a sequence.

  The right hand side of (\ref{num_wall_crossing}) gives
  \[
    \sum_{h=1}^\infty \frac{1}{h!}\sum_{B_1,\cdots,B_h} \Big\langle
      \mathbf u^m,
      \frac{\mu^+_{B_1}(-\psi)\phi_{k_{B_1}}}{|B_1|!},\cdots,\frac{\mu^+_{B_h}(-\psi)\phi_{k_{B_h}}}{|B_h|!}
    \Big\rangle_{g,m+h}^\infty,
  \]
  where the $(B_1,\cdots,B_h)$  runs over all sequences of  sequences of numbers in $\{1,\cdots,r\}$ such that
  the juxtaposition of $B_1,\cdots,B_h$ is equal to $(b_1,\cdots,b_n)$ up to permutation.

  The proof is now complete by noticing that given $B_1,\cdots,B_h$, there are
  exactly $\big|\mathrm{Aut}(b_1,\cdots,b_n)\big|$ many choices of
  $J_1^\prime,\cdots,J_h^\prime$ such that the sequence $B_i$ is equal to the
  sequence $(b_j)_{j\in J^\prime_i} $, for
  all $i=1,\cdots,h$.
\end{proof}
\begin{remark}
  \label{rmk_narrow}
  Since the invariants with broad heavy insertions vanish (Lemma \ref{lem_Ramond_vanishing}), the generating
  function  $\mathcal F^{0}_{g}(\mathbf u,\mathbf t)$ is independent of $u_{ij}$
  unless
  \[
    jq_\alpha \not\in \mathbb Z, \text{  for all }\alpha = 1,\cdots,s.
  \]
  In $\mathcal F^\infty_g(\mathbf u+\mu^+(\mathbf t,-\psi))$, the invariants
  involving broad $\phi_{k_{B_n}}$ will be zero. Hence we can redefine
  \[
    k_{\alpha,B_n} := \big\langle  q_\alpha k_{B_n} \big\rangle,
  \]
  so that in $\mu(\mathbf t,z)$, the coefficients of narrow $\phi_{k_{B_n}}$ are
  unchanged and the coefficients of broad $\phi_{k_{B_n}}$ become zero. Thus the
  wall-crossing formula remains unchanged.
\end{remark}
\section{The genus-$0$ wall-crossing formulas}
\label{genus_0_case}
\subsection{The moduli space and virtual localization}
In this section we consider the case $g=0$ with one heavy marking and $n$ light
markings. We assume that  $n\geq 2$ and consider
\[
  \gamma = \big(
  \frac{a}{r}\big|\frac{b_1}{r},\cdots,\frac{b_n}{r}
  \big).
\]
In this case, the moduli space $\overline{M}_{0,\gamma}^{1/r}$ does not exist.
However the master space $\tilde M_{0,\gamma}^{1/r}$ exists if and only if
\begin{equation}
  \label{selection_genus_0}
   -2 + (1-a) + \sum_{b_j\neq r} (1-b_j) +\sum_{b_j=r}1 \equiv 0 \mod r.
\end{equation}
Theorem \ref{proper_spin_master} applies to this case. The perfect obstruction theory and
localization also work.  Compared to the list of fixed-point components in
Section \ref{C_star_spin}, we have a slightly different list here:
\begin{enumerate}
\item
  $F^{1/r}_0 = \{\xi:v_1=0\}$ remains the same;
\item There is no analogue of $F^{1/r}_\infty$. Indeed $v_2$ never vanishes.
\item
  For each $\{1\}\subsetneqq J\subset\{1,\cdots,n\}$, we have
\begin{itemize}
\item if $J\neq \{1,\cdots,n\}$,  $F_{J}^{1/r}$ remains the same;
\item if $J=\{1,\cdots,n\}$, $F^{1/r}_J$ consists of $(C,\pi,x_1;\mathbf
  y,N,L,v_1,v_2,p)$, where $\pi:C\to S$ is smooth; $y_j=y_1$ for all
  $j=1,\cdots,n$; $v_1,v_2$ are both non-vanishing.
\end{itemize}
\end{enumerate}

The description of  $F^{1/r}_0$ (Lemma \ref{normal_r_spin_0_infty}) and
$F^{1/r}_J,J\neq\{1,\cdots,n\}$ (Lemma \ref{stack_F1/r_J}) still
holds. For $J=\{1,\cdots,n\}$, it is easy to see that $F^{1/r}_J\cong B\pmb \mu_r$.

The $\varphi$-fields identically vanish
in the genus-$0$ case for degree reasons. The proof
is the same as that of Lemma 1.5 of \cite{ross2014wall}. Hence we have $\tilde
M^{1/r,\varphi}_{0,\gamma} = \tilde M^{1/r}_{0,\gamma}$.
We still have the localization formula (\ref{localization}). Let $\mathcal{N}^\varphi_\star$
be the virtual normal bundle of
$F^{\varphi}_\star$. For $\star = 0$ or $\star=J\neq\{1,\cdots,n\}$,
the $\mathcal{N}^\varphi_\star$ are the same as those in Lemma
\ref{normal_12} and Lemma \ref{normal_3}.
\begin{lemma}
  For $J=\{1,\cdots,n\}$, we have
  \[
    \frac{1}{c_{\mathrm{top}}^{\mathbb C^*}(\mathcal{N}^\varphi_J)} =
  (-1)^{\Sigma_{\alpha}\ell_{\alpha,J}}\mu_B(-z).
\]
\begin{proof}
  The computation of the normal bundle is almost the same as that in Lemma~\ref{normal_3}. The
  relative obstruction theory is the same. The moving part of $\tau^*T_{\tilde
    M_{0,\gamma}}$ is $T_{y_1}^{\oplus (n-1)}$. Hence \[\frac{1}{c_{\mathrm{top}}^{\mathbb
      C^*}(\mathcal{N}^\varphi_J)}=
  (-1)^{\Sigma_{\alpha}\ell_{\alpha,J}}(-z)^{1-n+\Sigma_{\alpha}\ell_{\alpha,J}}
  \prod_{\alpha =1}^s \left[k_{\alpha,J}\right ]_{\ell_{\alpha,J}}=(-1)^{\Sigma_{\alpha}\ell_{\alpha,J}}\mu_B(-z).\]
\end{proof}
\end{lemma}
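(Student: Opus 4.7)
The plan is to specialize the virtual-normal-bundle computation of Lemma~\ref{normal_3} to the degenerate situation $J=\{1,\dots,n\}$. The essential simplification is that the curve has no ``main component'' $C_J$: the whole curve $C$ plays the role of the rational tail $E$, there is no separating node, and the $\varphi$-fields vanish identically. I would therefore read off $\mathcal{N}^\varphi_J$ from the restriction to $F^{\varphi}_J$ of the distinguished triangle~(\ref{triangle})
\[
  \tau^{*}\mathcal{T}_{\tilde M^{1/r}}[-1]\longrightarrow \mathbb{E}^{\vee}_{\tau}\longrightarrow \mathbb{E}^{\vee}_{\tilde M^{\varphi}}\overset{+1}{\longrightarrow},
\]
which reduces everything to combining two contributions.

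First I would handle $\mathbb{E}^{\vee}_{\tau}$. Since the $\varphi$-fields vanish, the entire complex $\bigoplus_{\alpha}R\pi_{*}\mathcal{L}^{w_{\alpha}}(D_{\alpha})$ sits in the moving part. The Serre-duality argument used in the proof of Lemma~\ref{normal_3} now applies verbatim — but to the whole smooth rational orbifold $C$ rather than to the tail $E$ — and yields
\[
  H^{1}\bigl(C,L^{w_{\alpha}}(D_{\alpha})\bigr)\cong \mathbb{C}_{k_{\alpha,J}}\otimes \bigoplus_{i=0}^{\ell_{\alpha,J}-1}\mathbb{C}_{i},
\]
whose equivariant top Chern class, interpreted with the degree-$1$ sign convention for the complex $R\pi_{*}$, is $\bigl(z^{\ell_{\alpha,J}}[k_{\alpha,J}]_{\ell_{\alpha,J}}\bigr)^{-1}$.

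Next I would treat the $\tau^{*}\mathcal{T}_{\tilde M^{1/r}}$ term. Because $F^{1/r}_J\cong B\pmb{\mu}_{r}$ is zero-dimensional and $\dim \tilde M^{1/r}_{0,\gamma}=n-1$, the moving part has rank exactly $n-1$. Geometrically it corresponds to the $n-1$ independent deformations that peel $y_{2},\dots,y_{n}$ off $y_{1}$, namely $T_{y_{1}}^{\oplus(n-1)}$; the analysis at the end of the proof of Proposition~\ref{normal_spin}, together with (\ref{action_on_P1}), identifies the $\mathbb{C}^{*}$-weight on $T_{y_{1}}$ as $-1$. Assembling via the distinguished triangle in $K$-theory gives
\[
  c^{\mathbb{C}^{*}}_{\mathrm{top}}\bigl(\mathcal{N}^{\varphi}_{J}\bigr) = (-z)^{n-1}\prod_{\alpha}\bigl(z^{\ell_{\alpha,J}}[k_{\alpha,J}]_{\ell_{\alpha,J}}\bigr)^{-1},
\]
and taking reciprocals collapses to $(-1)^{\Sigma_{\alpha}\ell_{\alpha,J}}\mu_{B}(-z)$ using the sign identity $(-1)^{2\Sigma_{\alpha}\ell_{\alpha,J}+(1-n)}=(-1)^{1-n}$.

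The main delicacy I anticipate is bookkeeping rather than any new idea: I need to verify that the genus-$0$ selection rule (\ref{selection_genus_0}) still forces the degree identity of the form $(L^{w_{\alpha}}(D_{\alpha}))^{-1}\otimes\omega_{C}\cong \mathbb{C}_{-k_{\alpha,J}}\otimes \mathcal{O}_{C}\bigl((r'k_{\alpha,J}-1)x_{1}+(\ell_{\alpha,J}-1)y_{1}\bigr)$ that underlies the $H^{1}$-computation in Lemma~\ref{normal_3}, and I must track carefully the sign $(-1)^{n-1}$ produced by rewriting $z^{1-n+\Sigma_\alpha \ell_{\alpha,J}}$ as $(-z)^{1-n+\Sigma_\alpha \ell_{\alpha,J}}$ when recognising the answer in the $\mu_{B}(-z)$ form.
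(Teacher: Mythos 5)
Your argument is correct and follows the same route as the paper: the relative obstruction theory contributes the factor $\prod_\alpha\bigl(z^{\ell_{\alpha,J}}[k_{\alpha,J}]_{\ell_{\alpha,J}}\bigr)^{-1}$ exactly as in Lemma \ref{normal_3} (with the whole curve playing the role of $E$ and $x_1$ that of $x_\infty$), while the moving part of $\tau^*\mathcal T_{\tilde M^{1/r}}$ is $T_{y_1}^{\oplus(n-1)}$ of weight $-1$ because there is no node to smooth. The sign bookkeeping at the end also checks out.
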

\subsection{The wall-crossing formulas}
We do not have the stabilization maps used in Proposition \ref{prop_vanishing}, due to the absence of $\overline
M_{0,\gamma}^{1/r}$. However, we can pushforward everything to a
point instead. For any non-negative integer $c$ this gives
\begin{equation}
  \label{genus_0_basic}
  \sum_J \Big\langle \psi^c\phi_a,\mu_J^+(-\psi) |(\phi_{b_j})_{j\not\in J} \Big\rangle^{0}_{0,2|(n-|J|)}
  =
    \big\{\mu_B(z)\big\}_{z^{-c-1}},
\end{equation}
where the summation is over all $1\in J\subsetneqq\{1,\cdots,n\}$, and
$\{*\}_{z^{-c-1}}$ means the coefficient of $z^{-c-1}$ in $*$.

For $a\in\{1,\cdots,r\}$, let $\phi^a=\phi_{a^\prime}$ where $a+a^\prime\equiv
0\mod r$.
Let $\mu^-_B(z)$ and $\mu^-(\mathbf t,z)$ be the truncation of $\mu_B(z)$ and
$\mu(\mathbf t,z)$ consisting of all the  negative powers of $z$.

\begin{corollary}
  \label{coro_genus_0_main}
  \begin{align*}
    \sum_{n\geq 2}\sum_{a=1}^r \frac{\phi^a}{n!} \big\langle \frac{\phi_a}{z-\psi},
    \big(\mu^+(\mathbf t,-\psi)\big)^n \big\rangle_{0,1+n}^\infty = \mu^-(\mathbf t,z).
  \end{align*}
\end{corollary}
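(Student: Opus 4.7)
The plan is to derive the corollary by packaging the basic genus-$0$ localization identity (\ref{genus_0_basic}) into generating-function form and then applying the wall-crossing Corollary \ref{cor_invariants} to convert $0$-FJRW invariants into $\infty$-FJRW invariants.

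For each fixed sequence $B=(b_1,\ldots,b_n)$, I first multiply (\ref{genus_0_basic}) by $z^{-c-1}\phi^a$ and sum over $c\ge 0$ and $a\in\{1,\ldots,r\}$. Using $\sum_{c\ge 0} z^{-c-1}\psi^c = 1/(z-\psi)$ together with the fact that the selection rule (\ref{selection_genus_0}) singles out a unique $a^{*}$ with $\phi^{a^{*}}=\phi_{k_B}$, this rewrites as
\[
\sum_a\phi^a\sum_{1\in J\subsetneq\{1,\ldots,n\}}\Big\langle\tfrac{\phi_a}{z-\psi},\mu_J^+(-\psi)\phi_{k_J}\Big|(\phi_{b_j})_{j\not\in J}\Big\rangle^0 = \mu_B^-(z)\phi_{k_B}.
\]
I then multiply by $t_{b_1}\cdots t_{b_n}/n!$ and sum over all sequences $B$. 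By the definition of $\mu^-$, the right-hand side becomes $\mu^-(\mathbf{t},z)$ (terms with $n\le 1$ contribute nothing since $\mu_B(z)$ has no negative powers of $z$ in those cases).

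Next I apply Corollary \ref{cor_invariants} to each $0$-FJRW invariant on the left, expressing it as a sum (weighted by $1/h!$) over ordered length-$h$ partitions $(J_1,\ldots,J_h)$ of $\{1,\ldots,n\}\setminus J$ (with $h\ge 1$) of $\infty$-FJRW invariants with insertions $\tfrac{\phi_a}{z-\psi}$, $\mu_J^+(-\psi)\phi_{k_J}$, and $\mu_{J_i}^+(-\psi)\phi_{k_{J_i}}$ for $i=1,\ldots,h$. The combined data $(J,J_1,\ldots,J_h)$ is precisely an ordered set partition of $\{1,\ldots,n\}$ into $n'=h+1$ nonempty blocks with $1$ in the distinguished first block $J$.

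It then remains to match the resulting expression with the target expansion $\sum_{n'\ge 2}\sum_a\frac{\phi^a}{n'!}\big\langle\tfrac{\phi_a}{z-\psi},(\mu^+(\mathbf{t},-\psi))^{n'}\big\rangle^\infty$. The main obstacle is combinatorial: grouping both sides by ordered multiset partitions $(M_0,\ldots,M_h)$ of a fixed multiset $M\subset\{1,\ldots,r\}$, the derived left side carries weight proportional to $|M_0|/n$ (reflecting the constraint that position $1$ lies in the distinguished first block $J$), whereas the target weights all orderings uniformly by $1/(h+1)$. These agree only after invoking the $S_{h+1}$-symmetry of the $\infty$-FJRW invariant under permutation of its last $h+1$ insertions: averaging $|M_{\sigma(0)}|$ over $\sigma\in S_{h+1}$ yields $n/(h+1)$, which rebalances the weights so that the two generating functions coincide.
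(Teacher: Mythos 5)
Your proposal is correct and follows essentially the same route as the paper: resum (\ref{genus_0_basic}) against $z^{-c-1}$ to obtain the $\frac{1}{z-\psi}$ form of the relation, then feed it into the already-established wall-crossing and match the two generating functions by the set-partition combinatorics (which, as you note, requires the symmetry of the $\infty$-invariants in their insertions). The only difference is organizational: the paper formally sets $\langle\cdots\rangle^{\infty}_{0,2}:=0$ and $\big\langle \frac{\phi_a}{z-\psi}\big|\phi_{b_1},\cdots,\phi_{b_n}\big\rangle^{0}_{0,1|n}:=\mu^-_B(z)$ so that the relation takes exactly the shape of (\ref{cor_recursive_2}) and the resummation from the proof of Theorem \ref{thm_higher_genus_potential} applies verbatim, whereas you apply Corollary \ref{cor_invariants} directly to the two-heavy-marking invariants and check the resulting weight-matching by hand.
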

\begin{proof}

We multiply both sides of (\ref{genus_0_basic}) by $z^{-c-1}$ and sum over  all
$c\geq 0$. This gives
\begin{equation}
  \label{genus_0_basis2}
  \sum_J \Big\langle \frac{\phi_a}{z-\psi},\mu_J^+(-\psi) |(\phi_{b_j})_{j\not\in J} \Big\rangle^{0}_{0,2|(n-|J|)}
  = \mu^-_B(z),
\end{equation}
where the summation is over all $1\in J\subsetneqq\{1,\cdots,n\}$.

For this proof only, we redefine the unstable terms
\[
  \langle \cdots \rangle^\infty_{0,2} :=0 \quad \text{and} \quad \Big\langle \frac{\phi_a}{z-\psi}\Big|\phi_{b_1},\cdots,\phi_{b_n} \Big\rangle^{0}_{0,1|n}
  := \mu^-_B(z).
\]
Thus (\ref{genus_0_basis2}) becomes
\[
  \sum_J \Big\langle \frac{\phi_a}{z-\psi},\mu_J^+(-\psi) |(\phi_{b_j})_{j\not\in J} \Big\rangle^{0}_{0,2|(n-|J|)}
  =
  \Big\langle \frac{\phi_a}{z-\psi}\Big|\phi_{b_1},\cdots,\phi_{b_n} \Big\rangle^{0}_{0,1|n},
\]
where the summation is over all $1\in J\subset\{1,\cdots,n\}$.
This is parallel to (\ref{cor_recursive_2}) and the combinatorics is exactly the
same as before. Hence we conclude that
\[
    \sum_{n\geq 2}\frac{1}{n!} \big\langle \frac{\phi_a}{z-\psi},
    \big(\mu^+(\mathbf t,-\psi)\big)^n \big\rangle_{0,1+n}^\infty = \sum_{n\geq
      2}\sum_{B_n} \frac{t_{b_1}\cdots t_{b_n}}{n!}\mu_{B_n}^{-}(z),
\]
where  $B_n=(b_1,\cdots,b_n)$ runs over all $n$-tuples
such that $b_j\in \{1,\cdots,r\}$ and $k_{B_n} +a \equiv 0 \mod r$. The latter
condition follows from (\ref{selection_genus_0}).

Hence we have $\phi^a = \phi_{k_{B_n}}$  for each $B_n$ and thus
\[
    \sum_{n\geq 2}\sum_{a=1}^r\frac{\phi^a}{n!} \big\langle \frac{\phi_a}{z-\psi},
    \big(\mu^+(\mathbf t,-\psi)\big)^n \big\rangle_{0,1+n}^\infty = \sum_{n\geq
      2}\sum_{B_n} \frac{t_{b_1}\cdots t_{b_n}}{n!}\mu_{B_n}^{-}(z)\phi_{k_{B_n}}.
\]
where  $B_n=(b_1,\cdots,b_n)$ runs over all $n$-tuples
of $b_j\in \{1,\cdots,r\}$.
Notice that $\mu_{B_1}(z)=1$ and thus $\mu^-_{B_1}(z)=0$. Hence we can extend
the summation on the right hand side to be over  all $n\geq 1$ and the Corollary follows.
\end{proof}

\subsection{The big $\mathcal  J$-function and Givental's formalism}
Now we define the big $\mathcal J$-function
\begin{equation}
  \label{defn_J}
  \mathcal  J^{0}(\mathbf u,\mathbf t,z) =
  \mathbf u(-z)+
  z\phi_1 + \mu(\mathbf t,z) + \sum_{m\geq 1}\sum_{n\geq 0}\sum_{a=1}^r \frac{1}{m!n!}\phi^{a}
  \big\langle
  \frac{\phi_a}{z-\psi},\mathbf u^m|\!~\mathbf
  t^n
  \big\rangle_{0,(1+m)|n}^{0},
\end{equation}
where the unstable term $(m,n)=(1,0)$ is defined to be zero.
We have
\begin{corollary}
  \[
    \mathcal  J^{0}(\mathbf u,\mathbf t,z)  =
   \mathbf u(-z)+\mu^+(\mathbf t,z)  +  z\phi_1 +
  \sum_{n\geq 0} \sum_{a=1}^r \frac{\phi^{a}}{n!}
  \big\langle
  \frac{\phi_a}{z-\psi},\big(\mathbf u + \mu^+(\mathbf t,-\psi)\big)^n
  \big\rangle_{0,1+n}^\infty,
\]
where the unstable terms $n=0,1$ are defined to be zero.
\end{corollary}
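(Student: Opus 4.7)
The plan is to apply the genus-$0$ wall-crossing, in the form proved for correlators in Corollary \ref{cor_invariants} and packaged into generating functions in the proof of Theorem \ref{thm_higher_genus_potential}, with one extra marked heavy descendant insertion $\tfrac{\phi_a}{z-\psi}$; the residual piece with no other heavy insertions will then be absorbed using Corollary \ref{coro_genus_0_main}.

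First I would subtract the common terms $\mathbf u(-z) + z\phi_1$ from both sides of the asserted identity and split $\mu(\mathbf t,z) = \mu^+(\mathbf t,z) + \mu^-(\mathbf t,z)$. The claim then reduces to
\begin{equation*}
  \mu^-(\mathbf t, z) + \sum_{m\geq 1,\,n\geq 0}\sum_{a=1}^r \frac{\phi^a}{m!\,n!}\big\langle \tfrac{\phi_a}{z-\psi}, \mathbf u^m \,|\, \mathbf t^n\big\rangle^{0}_{0,(1+m)|n}
  = \sum_{m\geq 0}\sum_{a=1}^r \frac{\phi^a}{m!}\big\langle \tfrac{\phi_a}{z-\psi}, (\mathbf u + \mu^+(\mathbf t,-\psi))^m\big\rangle^{\infty}_{0,1+m}.
\end{equation*}

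Next, for each fixed $m\geq 1$ and $n\geq 0$, the correlator on the left has $1+m\geq 2$ heavy insertions, so Corollary \ref{cor_invariants} is applicable after expanding $\tfrac{1}{z-\psi} = \sum_{c\geq 0} z^{-c-1}\psi^c$ and working coefficient by coefficient in $z$. The same partition-to-sequence combinatorial repackaging used in the proof of Theorem \ref{thm_higher_genus_potential} would then identify, for each fixed $m\geq 1$,
\begin{equation*}
  \sum_{n\geq 0}\sum_{a=1}^r \frac{\phi^a}{m!\,n!}\big\langle \tfrac{\phi_a}{z-\psi}, \mathbf u^m \,|\, \mathbf t^n\big\rangle^{0}_{0,(1+m)|n}
  = \sum_{a=1}^r \frac{\phi^a}{m!}\big\langle \tfrac{\phi_a}{z-\psi}, (\mathbf u + \mu^+(\mathbf t,-\psi))^m\big\rangle^{\infty}_{0,1+m}.
\end{equation*}
Summing over $m\geq 1$ matches everything on the right-hand side except the $m=0$ piece.

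The remaining $m=0$ piece is $\sum_{k\geq 0}\sum_a \tfrac{\phi^a}{k!}\big\langle \tfrac{\phi_a}{z-\psi}, (\mu^+(\mathbf t,-\psi))^k\big\rangle^{\infty}_{0,1+k}$. The terms with $k=0,1$ vanish by instability of the moduli spaces $\overline M^{1/r}_{0,1}$ and $\overline M^{1/r}_{0,2}$, and Corollary \ref{coro_genus_0_main} identifies the sum over $k\geq 2$ with $\mu^-(\mathbf t,z)$, matching the leftover term on the left-hand side. The main obstacle is combinatorial bookkeeping: one must verify that expanding $\tfrac{\phi_a}{z-\psi}$ as a formal $z$-series of $\psi$-descendant insertions commutes with the partition-into-sequences repackaging used for $\mathcal F^0_g$, so that no new combinatorial identity is required beyond what is already established in the proof of Theorem \ref{thm_higher_genus_potential}.
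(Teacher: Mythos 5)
Your proposal is correct and follows essentially the same route as the paper: split $\mu=\mu^++\mu^-$, handle the terms with at least one $\mathbf u$-insertion by the genus-$0$ wall-crossing (Corollary \ref{cor_invariants} repackaged as in the proof of Theorem \ref{thm_higher_genus_potential}), and identify the residual $m=0$ piece with $\mu^-(\mathbf t,z)$ via Corollary \ref{coro_genus_0_main}. The only cosmetic difference is that the paper cites Theorem \ref{thm_higher_genus_potential} directly for the $m\geq 1$ part, while you unwind it through Corollary \ref{cor_invariants}; the combinatorics is identical.
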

\begin{proof}
  We first write $\mu(\mathbf t,z)=\mu^+(\mathbf t,z)+\mu^-(\mathbf t,z)$ in
  (\ref{defn_J}).
  Putting Corollary \ref{coro_genus_0_main} and Theorem \ref{thm_higher_genus_potential} together, we get
  \begin{align*}
    &\mu^-(\mathbf t,z) +  \sum_{m\geq 1}\sum_{n\geq 0}\sum_{a=1}^r \frac{\phi^{a}}{m!n!}
    \big\langle
    \frac{\phi_a}{z-\psi},\mathbf u^m|\!~\mathbf t^n
    \big\rangle_{0,(1+m)|n}^{0}  \\
    =&\sum_{m\geq 0}\sum_{n\geq 0} \sum_{a=1}^r \frac{\phi^{a}}{m!n!}
  \big\langle
  \frac{\phi_a}{z-\psi},\mathbf u^m,\mu^+(\mathbf t,-\psi)^n
       \big\rangle_{0,1+m+n}^\infty \\
    =& \sum_{n\geq 0} \sum_{a=1}^r \frac{\phi^{a}}{n!}
  \big\langle
  \frac{\phi_a}{z-\psi},\big(\mathbf u+\mu^+(\mathbf
  t,-\psi)\big)^n\big\rangle_{0,1+m+n}^\infty
  \end{align*}
  This completes the proof.
\end{proof}

We can restrict our formula to the narrow state space since $\phi_a$ is narrow
if and only if $\phi^a$ is narrow. This means that we set $u_{ij}=t_j=0$ if
$jq_\alpha\in \mathbb Z$ for some $\alpha=1,\cdots,s$. The big $\mathbb
I$-function defined in \cite{ross2014wall} is our $z\phi_1+\mu^+(\mathbf t,z)$
(cf. Remark \ref{rmk_narrow}).
If we set $\widehat{\mathbf u}(z) = \mathbf u(z) + \mu(\mathbf t,-z)$, then $\mathcal  J^{0}(\mathbf u,\mathbf t,-z)$ satisfies Definition 1.10 in
\cite{ross2014wall}  and thus lies on the Lagrangian cone, matching \cite[Theorem 1]{ross2014wall}.
\bibliography{/home/yang/Dropbox/Research/references.bib}
\bibliographystyle{abbrv}
\end{document}